\newtheorem{thm}{theorem}[section]
\newtheorem{theorem}[thm]{Theorem}
\newtheorem{proposition}[thm]{Proposition}
\newtheorem{lemma}[thm]{Lemma}
\newtheorem{corollary}[thm]{Corollary}
\newtheorem{remark}[thm]{Remark}
\newtheorem{definition}[thm]{Definition}
\begin{document}

\title[$\mathbb{Z}$-graded identities of the Lie algebras $U_1$]{$\mathbb{Z}$-graded identities of the Lie algebras $U_1$}
\author[Fidelis]{Claudemir Fidelis}
\thanks{C. Fidelis was supported by FAPESP grant No.~2019/12498-0}
\address{Unidade Acad\^emica de Matem\'atica, Universidade Federal de Campina Grande, Campina Grande, PB, 58429-970, Brazil
	\\ and \\Instituto de Matem\'atica e Estat\'istica da Universidade de S\~ao Paulo, SP, 05508-090, Brazil
}
\email{claudemir.fidelis@professor.ufcg.edu.br}

\author[Koshlukov]{Plamen Koshlukov}
\thanks{P. Koshlukov was partially supported by FAPESP grant No.~2018/23690-6 and by CNPq grant No.~302238/2019-0}
\address{Department of Mathematics, UNICAMP, 13083-859 Campinas, SP,  Brazil}
\email{plamen@unicamp.br}

\begin{abstract}
Let $K$ be an infinite field of characteristic different from two and let $U_1$ be the Lie algebra of the derivations of the algebra of Laurent polynomials $K[t,t^{-1}]$. The algebra $U_1$ admits a natural $\mathbb{Z}$-grading. We provide a basis for the graded identities of $U_1$ and prove that they do not admit any finite basis. Moreover, 
we provide a basis for the identities of certain graded Lie algebras with a grading such that every homogeneous component has dimension $\leq 1$, if a basis of the multilinear graded identities is known. As a consequence of this latter result
we are able to provide a basis of the graded identities of the Lie algebra $W_1$ of the derivations of the polynomial ring $K[t]$. The $\mathbb{Z}$-graded identities for $W_1$, in characteristic 0, were described in \cite{FKK}. As a consequence of our results, we give an alternative proof of the main result, Theorem 1, in \cite{FKK}, and generalize it to positive characteristic. We also describe a basis of the graded identities for the special linear Lie algebra $sl_q(K)$ with the Pauli gradings where $q$ is a prime number.
\medskip

\noindent
\textbf{Keywords:} Graded identities; Graded Lie algebra; Infinite basis of identities.

\medskip

\noindent
\textbf{Mathematics Subject Classification 2010:} 16R10, 17B01, 17B65, 17B70.
\end{abstract}

\maketitle

\section{Introduction}

Group graded algebras have been extensively studied in the last decades. The reader may find quite large list of references concerning recent results about gradings on algebras in the recent monograph \cite{EldKoch}. Many authors have also paid attention to grading on Lie algebras \cite{BahKoch,BahSheZai,DraEld,PatZas}. In both the associative and Lie  case, gradings that admit no proper refinement, in particular the gradings in which every component has dimension $\leq 1$, are of special interest in classification results, see \cite{BahSehZai,BahZai,DraEld,Guim,PatZas}. This development, together with recent advances in the theory of graded polynomial identities for associative algebras gave impulse to the study of graded polynomial identities for Lie algebras and superalgebras.  The asymptotics of the graded codimension growth of the Lie (Associative, Jordan) algebra $UT_n$ of $n\times n$ upper triangular matrices with an elementary grading, related to the Cartan grading on the special linear Lie algebra, was obtained in \cite{FP}. As a consequence the asymptotic behaviour of the graded codimension sequence, for every elementary grading on $UT_n$ was obtained. 

The known results on graded identities for Lie algebras are relatively few. In addition to the aforementioned paper \cite{FP}, we recall that in \cite{RZ} the Pauli grading on certain simple Lie superalgebras is defined and numerical invariants for the graded identities are studied; recall that the dimension of every non-zero homogeneous component in the grading equals 1. The Lie algebra of derivations of the polynomial ring in one variable has a canonical grading by the group $\mathbb{Z}$ such that  every non-zero component is one dimensional. The graded identities for this algebra were described in \cite{FKK}, assuming that the field is of characteristic 0. If $K$ is an algebraically closed field of characteristic different from $2$ then, up to equivalence, the fine gradings on $sl_2(K)$ are the Pauli grading, with universal group $\mathbb{Z}_2\times \mathbb{Z}_2$, and the Cartan grading with universal group $\mathbb{Z}$. Hence, up to equivalence, $sl_2({K})$ admits three non-trivial group gradings by $\mathbb{Z}_2$, $\mathbb{Z}_2\times \mathbb{Z}_2$ (Pauli grading) and $\mathbb{Z}$ (Cartan grading). Bases of the graded polynomial identities for these gradings were determined in \cite{K}. Over a field of characteristic zero, it was proved in \cite{GM2} that the algebra $sl_2(K)$ endowed with any one of these three non-trivial gradings satisfies the Specht property. In \cite{GM1} the authors prove that, in each case, the corresponding variety of graded Lie algebras is a minimal variety of exponential growth. Moreover they prove that for the Pauli grading and the Cartan grading the variety has almost polynomial growth. Recently, Fidelis, Diniz and de Souza in \cite{CDF}, over a field of characteristic zero, studied the graded identities of the special linear Lie algebra $sl_n(K)$ with the Pauli and Cartan gradings. The methods from \cite{CDF} work for the Pauli grading only in case $n$ is a prime number.

Unless otherwise stated, throughout this paper $K$ denotes an infinite field of characteristic different from two. All algebras and vector spaces which we consider are unitary and over $K$. The purpose of the present paper is to study the graded identities satisfied by the Lie algebra $U_1$ of the derivations of the algebra of Laurent polynomials $K[t,t^{-1}]$ with its natural $\mathbb{Z}$-grading. More precisely, we provide a basis for the graded identities of $U_1$.
Furthermore we prove that the graded identities of $U_1$ do not admit any finite basis. We also provide bases for the identities of certain graded Lie algebras with a grading in which every homogeneous has dimension $\leq 1$, as long as a basis of their multilinear graded identities is known. We recall that, over a field of characteristic zero, the counterpart of this result for the Lie algebra $W_1$ of the derivations of the polynomial ring $K[t]$, was studies in \cite{FKK}. We recall here that the algebras $W_n$ and $U_n$ (defined as derivations of the corresponding polynomial and Laurent polynomial rings in $n$ variables) were first studied around 1910 by E. Cartan in his classification of simple Lie algebras in characteristic 0. Later on it was discovered that these algebras, although not simple in positive characteristic, produce naturally various simple finite dimensional Lie algebras as their homomorphic images. The celebrated theorems of V. Kac \cite{kac, kacbook} classified the simple Lie algebras graded by $\mathbb{Z}$ under some natural conditions. Namely he required the algebra $L=\oplus_{i\in \mathbb{Z}} L_i$ to be of polynomial growth: $\sum_{j\le i} \dim L_j$ grows like a polynomial in $i$; $L_0$ acts irreducibly on $L_{-1}$, and $L$ is generated by degrees 0 and $\pm 1$. Later on O.  Mathieu \cite{olmat} classified the simple $\mathbb{Z}$-graded Lie algebras of polynomial growth.

The ``ordinary'' identities of $W_1$ coincide with the identities of the Lie algebra of the vector fields on the line if $K=\mathbb{R}$ is the real field. The standard Lie polynomial of order $4$ (which is of degree 5 and is alternating in 4 of its variables) is an identity of $W_1$. On the other hand, it is a long-standing open problem to determine a basis of the identities satisfied by $W_1$. The vector space of $W_n$, the derivations of the polynomial ring in $n$ variables, can be given the structure of a left-symmetric algebra, denoted by $L_n$. In \cite{koum} the authors studied the right-operator identities of $L_n$, and described a large class of general identities for $L_n$.  We hope that our results about the $\mathbb{Z}$-graded identities of $U_1$ may shed additional light on the polynomial identities satisfied by $W_1$, and consequently by $U_1$.

\section{Definitions and preliminary results} \label{Preliminaries}

We fix an infinite field $K$ of characteristic different from 2, all algebras and vector spaces we consider will be over $K$. If $A$ is an associative algebra one defines on the vector space of $A$ the Lie bracket $[a,b]=ab-ba$, and the Jordan product $a\circ b= (ab+ba)/2$, $a$, $b\in A$. Denote by $A^{(-)}$ and by $A^{(+)}$ these new structures. It is immediate that $A^{(-)}$ is a Lie algebra; the Poincar\'e--Birkhoff--Witt theorem yields that every Lie algebra is a subalgebra of some $A^{(-)}$. On the other hand, 
 $A^{(+)}$ is a Jordan algebra; the Jordan algebras of this type, as well as their subalgebras, are called special, otherwise they are exceptional.

Let $L$ be an algebra (not necessarily associative) and let $G$ be a group. A grading by $G$ (or simply a $G$-grading) on $L$ is a vector space decomposition
\begin{align}\label{gr}
\Gamma\colon L=\oplus_{g\in G}L_g,
\end{align}
such that $L_gL_h\subseteq L_{gh}$, for all $g$, $h\in G$. In this case one says that $L$ is $G$-graded. The subspaces $L_g$ are the homogeneous components of the grading and a nonzero element $a$ of $L$
is homogeneous if $a\in L_g$ for some $g\in G$; we denote this by $\|a\|_G=g$ (or simply $\|a\|=g$ when the group $G$ is clear from the context). The support of the grading is the set $\mathrm{supp}\ L=\{g\in G \mid L_g\neq 0\}$. A subalgebra (an ideal, a subspace) $B$ of $A$ is a graded subalgebra (respectively ideal, subspace) if $B=\oplus_{g\in G} A_{g}\cap B$. Let $H$ be a non-empty set, an $H$-grading $\Gamma^{\prime}: L=\oplus L^{\prime}_h$ is a refinement of (\ref{gr}) (or, equivalently, (\ref{gr}) is a coarsening of $\Gamma^{\prime}$) if for every $h\in H$ there exists $g\in G$ such that $L_h^{\prime}\subset L_g$. Important in the classification of gradings on Lie algebras are those  gradings that do not admit a proper refinement, they are called fine gradings. Below we give three examples of algebras equipped with fine gradings that will be extremely important in our paper.

Assume that $m\geq 2$ and $K$ contains $\varepsilon$ a primitive $m$-th root of the unity. Consider the matrices in $R=M_n(K)$
\[
A=\begin{pmatrix}
\varepsilon^{m-1}	& 0 & \cdots & 0 \\ 
0	& \varepsilon^{m-2} & \cdots & 0 \\ 
\vdots	& \vdots & \ddots & \vdots \\ 
0	& 0 & \cdots & 1
\end{pmatrix} \ \ \ \ \mbox{ and } \ \ \ \ B=\begin{pmatrix}
0	& 1 & 0 &\cdots & 0 \\ 
0	& 0 & 1 & \cdots & 0 \\ 
\vdots	& \vdots & \vdots & \ddots & \vdots \\ 
0	& 0 & 0 & \cdots & 1\\
1	& 0 & 0 & \cdots & 0
\end{pmatrix}.
\]
Note that $A$ and $B$ satisfy the relations  $AB=\varepsilon BA$ and $A^m=B^m=I$ where
$I$ is the identity $m\times m$ matrix. The matrices $A^iB^j$, $1\leq i$, $j \leq m$, are linearly independent over $\mathbb{K}$. Since there are $m^2$ such products, the set of these matrices is a basis for $R$. Let $L=sl_m(\mathbb{K})$ be the special linear Lie algebra, note that if $A^{i}B^{j}\neq I$ then $A^{i}B^{j}\in L$, therefore
\begin{equation}\label{pauli}
L=\oplus_{(\bar{i},\bar{j})\in\mathbb{Z}_{m}^{2}}L_{(\bar{i},\bar{j})},
\end{equation}
where $L_{(\bar{0},\bar{0})}=0$ and $L_{(\bar{i},\bar{j})}={K}(A^{i}B^{j})$ if $(\bar{i},\bar{j})\neq (\bar{0},\bar{0})$. Since 
\begin{equation}\label{comm}
[A^iB^j,A^rB^s]=(\varepsilon^{-rj}-\varepsilon^{-is})A^{i+r}B^{j+s},
\end{equation}
the decomposition (\ref{pauli}) is a grading on $L$ by the group $\mathbb{Z}_m\times \mathbb{Z}_m$. The decomposition
\[
R=\bigoplus_{(\bar{i},\bar{j})\in\mathbb{Z}_{m}^{2}}R_{(\bar{i},\bar{j})},\mbox{ with }R_{(\bar{i},\bar{j})}={K}(A^iB^j)
\]
is a grading on $R$. It is called the Pauli grading; it arises in the classification of the gradings on matrix algebras. 

Now consider the polynomial algebra $K[t]$ in one variable $t$. The derivations of the algebra $K[t]$ form a Lie algebra denoted by $W_1$, $W_1=Der(K[t])$. It is immediate that the elements $e_n=t^{n+1}d/dt$, $n\geq -1$, form a basis of $W_1$. The Lie algebra structure on the vector space $W_1$ is given by the multiplication
\begin{equation}\label{multiwitt}
[e_i, e_j] = (j-i)e_{i+j}.
\end{equation}
The algebra $W_1$ has a $\mathbb{Z}$-grading,
$W_1=\oplus_{i\in\mathbb{Z}}L_i$ where $L_i=0$ whenever $i\leq -2$, and $L_i$ is the (one-dimensional) span of $e_i$ if $i\geq -1$. The algebra $W_1$ plays an important role in the classification of simple Lie algebras. It is simple in characteristic 0, and it can be used to produce the so-called simple algebras of Cartan type in positive characteristic. It is also important in various applications.

Finally, let $A=K[t,t^{-1}]$ be the algebra of Laurent polynomials in one variable $t$. The derivations of the algebra $A$ form a Lie algebra denoted by $U_1$, $U_1=Der(A)$. As in $W_1$, it is immediate that the elements $e_n=t^{n+1}d/dt$, $n\in\mathbb{Z}$, form a basis of $U_1$ and that it has a Lie algebra structure given by the multiplication \eqref{multiwitt}. The algebra $U_1$ is $\mathbb{Z}$-graded, $U_1=\bigoplus_{i\in\mathbb{Z}}L_i$ where $L_i$ is the span of $e_i$, for each $i\in\mathbb{Z}$. We say that $U_1$ has full support on $\mathbb{Z}$, in other words $\mathrm{supp}\ U_1=\mathbb{Z}$.

There are very many other interesting and important examples of graded algebras (associative, Lie, and so on). We gave in detail the above three because we will work extensively with them in this paper. 

Let $G$ be an abelian group, let $\{X_g\}_{g\in G}$ be a family of disjoint sets $X_g = \{x_{1}^{g}, x_{2}^{g}, \ldots\}$, and let $X_G=\cup_{g\in G} X_g$. The free associative algebra $K\langle X_G \rangle$ can be given a $G$-grading as follows. We declare the monomial $x_{i_1}^{ g_1}\cdots x_{i_n}^{g_n}$ homogeneous of degree $g_1 \cdots g_n$, and then extend this by linearity. The subalgebra $L\langle X_G \rangle$ of $K\langle X_G\rangle^{(-)}$ generated by $X_G$ is the free $G$-graded Lie algebra, freely generated by the set $X_G$. Note that $L\langle X_G \rangle$ is a graded subspace of $K\langle X_G \rangle$ and that the corresponding decomposition is a $G$-grading on $L\langle X_G \rangle$. The elements of $L\langle X_G \rangle$ are called $G$-graded polynomials (or simply polynomials). The degree of a polynomial $f$ in $x_i^{g_i}$, denoted by $\deg_{x_i^{g_i}}f$, counts how many times the variable $x_i^{g_i}$ appears in the monomials of $f$, and it is defined in the ordinary way. The definitions of multilinear and multihomogeneous polynomials are the natural ones. We define the (left-normed) commutator $[l_1,\cdots, l_n]$ of $n\geq 2$ elements $l_1$, \dots, $l_n$ in a Lie algebra $L$ inductively,  $[l_1,\cdots, l_n]=[[l_1,\cdots,l_{n-1}],l_n]$ for $n>2$.

Let $L=\oplus_{g\in G}L_g$ be a Lie algebra with a $G$-grading. An admissible substitution for the polynomial $f(x_{1}^{g_1},\ldots, x_{n}^{g_n})$ in $L$ is an $n$-tuple $(a_1,\ldots, a_n)\in L^{n}$ such that $a_i\in L_{g_i}$, for $i=1$, \dots, $n$. If $f(a_1,\dots,a_n)=0$ for every admissible substitution $(a_1,\dots, a_n)$ we say that $f(x_{1}^{g_1},\dots, x_{n}^{g_n})$ is a graded identity for $L$. The set of $G$-graded polynomial identities of $L$ will be denoted by $T_G(L)$. It is a $T_G$-ideal,  that is an ideal invariant under the endomorphisms of $L\langle X_G \rangle$ as a graded algebra. The intersection of a family of $T_G$-ideals in $L\langle X_G \rangle$ is a $T_G$-ideal; given a set of polynomials $S\subseteq L\langle X_G\rangle$ we denote by $\langle S \rangle_G$ the intersection of the $T_G$-ideals of $L\langle X_G \rangle$ that contain $S$. We call $\langle S \rangle_G$ the $T_G$-ideal generated by $S$, and refer to $S$ as a basis of this $T_G$-ideal. It is well known that in characteristic 0, every $T_G$-ideal $T_G(L)$ is generated by its multilinear polynomials. Over an infinite field of positive characteristic one has to take into account the multihomogeneous polynomials instead of the multilinear ones.

The vector space $P_n$ is the subspace of $L\langle X_G \rangle$ of the multilinear polynomials in $n$ variables. Let ${\bf g}=(g_1,\dots, g_n)$ be an $n$-tuple of elements of $G$, we denote by $P_n^{\bf g}$ the subspace of $L\langle X_G \rangle$
of the multilinear polynomials of degree $n$ in the variables $x_{1}^{g_1}$, \ldots, $x_{n}^{g_n}$. Note that 
\begin{equation}\label{decomp}
\frac{P_n}{P_n\cap T_G(L)} \cong \bigoplus\limits_{\bf{g} \in G^n} \frac{P_n^{\bf g}}{P_n^{\bf g}\cap T_G(L)}.
\end{equation}

\begin{definition}\label{defgood}
	Let $L=\oplus_{g\in G} L_g$ be a grading by the group $G$ on the Lie algebra $L$. We say the $n$-tuple ${\bf g}=(g_1,\ldots,g_n)\in G^n$ is good if $P_n^{\bf g}\not \subset T_G(L)$, otherwise we say that ${\bf g}$ is bad. A good $n$-tuple ${\bf g}\in G^n$ is called standard if the commutator $[x_{1}^{g_1},\ldots, x_{n}^{g_n}]$ is not an identity for $L$. 
\end{definition}

Throughout the text, when there is no doubt about the $G$-degree of the variables, the notation of $G$-degree will be omitted. The following  definition will be used frequently. Let $L$ be a $G$-graded Lie algebra and let $I\subset T_G(L)$ be a $T_G$-ideal. 

\begin{definition}\label{sim}
	Let $f$, $g$ be two monomials in $L\langle X_G \rangle\setminus T_G(L)$. We say that $f$ and $g$ are equivalent, and denote it by $f\sim_I g$, if there exists a  scalar $\lambda$ such that $f-\lambda g$ lies in $I$.
\end{definition}

Note that the scalar $\lambda$ is different from $0$ and that $\sim_I$ is an equivalence relation. The $T_G$-ideal $I$ will be omitted from the notation if it is clear from the context.

Let $L=\oplus_{g\in G}L_g$ be a Lie algebra equipped with a $G$-grading such that $\dim L_g\leq 1$. Given $g\in G$
we denote by $C(g)$ the set $\{h\in G \mid [L_g, L_h]=0\}$. Then $L$ clearly satisfies the identities
\begin{equation}\label{basis1}
x_{1}^{g}, \quad g\notin \mathrm{supp}\ L,
\end{equation}
and
\begin{equation}\label{basis2}
[x_{1}^{g},x_{2}^{h}], \quad h\in C(g), \quad gh\in \mathrm{supp}\ L.
\end{equation}

As $\dim L_g\leq 1$ for every $g\in G$, we obtain that for every $g$, $h$, $k\in G$ such that $[x_{1}^{g},x_{2}^{h}, x_{3}^{k}]\notin T_G(L)$ there exists unique scalar $\lambda_{g,h,k}$ with the property 
\begin{equation}\label{basis3}
[x_{1}^{g},x_{3}^{k}, x_{2}^{h}]-\lambda_{g,h,k}[x_{1}^{g},x_{2}^{h}, x_{3}^{k}],
\end{equation}
is a graded identity for $L$.

Over a field of characteristic zero, a basis of the $\mathbb{Z}$-graded identities of the Lie algebra $W_1$, as well as of the special linear Lie algebra $sl_p(K)$ with the Pauli gradings where $p$ is a prime, is well-known, see \cite[Theorem 1]{FKK} and \cite[Theorem 3.13]{CDF}, respectively. The goal of the present paper is to obtain the counterpart of the previous results for the algebra $U_1$. More precisely, our main result is the following theorem.

\begin{theorem}\label{mainresult1}
	Let $K$ be a field of characteristic zero. The graded identities
	\begin{align}
	\label{Cbasis1}	[x_1^i,x_2^i] &\equiv 0\\
	\label{Cbasis3}	\alpha[x_1^a,x_2^b, x_3^c]-\beta 	[x_1^a,x_3^c, x_2^b]&\equiv 0
	\end{align}
	where $\alpha=(c-a)(b-c-a)$, $\beta=(b-a)(c-b-a)$, form a basis for the $\mathbb{Z}$-graded identities of the Lie algebra $U_1$ over $K$.
\end{theorem}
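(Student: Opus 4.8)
The plan is to prove that the $T_{\mathbb{Z}}$-ideal $I$ generated by \eqref{Cbasis1} and \eqref{Cbasis3} coincides with $T_{\mathbb{Z}}(U_1)$. First I would verify $I\subseteq T_{\mathbb{Z}}(U_1)$ directly: since $\dim L_g\le 1$, by multilinearity it suffices to substitute $x_i^{g}\mapsto e_{g}$, and \eqref{multiwitt} gives $[x_1^a,x_2^b,x_3^c]\mapsto (b-a)(c-a-b)\,e_{a+b+c}=\beta\, e_{a+b+c}$ and $[x_1^a,x_3^c,x_2^b]\mapsto (c-a)(b-a-c)\,e_{a+b+c}=\alpha\, e_{a+b+c}$, so the left-hand side of \eqref{Cbasis3} evaluates to $\alpha\beta-\beta\alpha=0$, while \eqref{Cbasis1} holds because $[e_i,e_i]=0$. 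As $\mathrm{char}\,K=0$, the ideal $T_{\mathbb{Z}}(U_1)$ is determined by its multilinear part, so by \eqref{decomp} I fix a multidegree $\mathbf{g}=(g_1,\dots,g_n)$ and study $P_n^{\mathbf{g}}$. It is spanned by the left-normed commutators $c_\sigma=[x_1^{g_1},x_{\sigma(2)}^{g_{\sigma(2)}},\dots,x_{\sigma(n)}^{g_{\sigma(n)}}]$, with $\sigma\in\mathrm{Sym}\{2,\dots,n\}$, and evaluating through \eqref{multiwitt} yields $c_\sigma\mapsto \mu_\sigma\, e_{g_1+\cdots+g_n}$, where
\[
\mu_\sigma=\prod_{k=2}^{n}\Big(g_{\sigma(k)}-\sum_{l=1}^{k-1}g_{\sigma(l)}\Big),\qquad \sigma(1)=1.
\]
The map sending $f\in P_n^{\mathbf{g}}$ to the coefficient of $e_{g_1+\cdots+g_n}$ in $f(e_{g_1},\dots,e_{g_n})$ is a linear functional whose kernel is $P_n^{\mathbf{g}}\cap T_{\mathbb{Z}}(U_1)$; hence $\dim P_n^{\mathbf{g}}/(P_n^{\mathbf{g}}\cap T_{\mathbb{Z}}(U_1))\le 1$, with equality exactly when some $\mu_\sigma\ne0$.

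The core is to prove the companion bound $\dim P_n^{\mathbf{g}}/(P_n^{\mathbf{g}}\cap I)\le1$; together with the inclusion $I\subseteq T_{\mathbb{Z}}(U_1)$ this forces the natural surjection onto $P_n^{\mathbf{g}}/(P_n^{\mathbf{g}}\cap T_{\mathbb{Z}}(U_1))$ to be an isomorphism on every component, giving $I=T_{\mathbb{Z}}(U_1)$. I would obtain the bound through two reductions of the $c_\sigma$ modulo $I$. \emph{Vanishing:} if $\mu_\sigma=0$ then $c_\sigma\equiv0\pmod I$. A zero factor $g_{\sigma(k)}-\sum_{l<k}g_{\sigma(l)}$ means the prefix $w=[x_1^{g_1},\dots,x_{\sigma(k-1)}^{g_{\sigma(k-1)}}]$ has $\mathbb{Z}$-degree $g_{\sigma(k)}$; substituting $w$ and $x_{\sigma(k)}^{g_{\sigma(k)}}$ into the two slots of \eqref{Cbasis1} gives $[w,x_{\sigma(k)}^{g_{\sigma(k)}}]\equiv0$, whence $c_\sigma\equiv0$. \emph{Sorting:} if $\mu_\sigma,\mu_\tau\ne0$ then $c_\sigma\equiv(\mu_\sigma/\mu_\tau)\,c_\tau\pmod I$. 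Here \eqref{Cbasis3}, with a prefix commutator $w$ of degree $a$ substituted for $x_1^a$, swaps two adjacent tail entries and multiplies the commutator by a scalar that is precisely the ratio of the two corresponding values $\mu$; bubble-sorting the tail of $c_\sigma$ into that of $c_\tau$ then proves the claim, provided the transpositions can be chosen so that every intermediate commutator still has $\mu\ne0$, so that \eqref{Cbasis3} yields honest proportionality and not the degenerate relation $0\equiv0$.

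I expect this proviso to be the main obstacle. Concretely, an adjacent transposition at positions $j,j+1$ of a commutator with $\mu\ne0$ lands on another commutator with $\mu\ne0$ unless the partner coefficient in \eqref{Cbasis3} also vanishes, and a short computation shows that both coefficients vanish only for a finite list of configurations of the prefix degree $a$ and the two entry degrees $b,c$, namely $a=b=c$ or $(a,b,c)\in\{(a,0,a),(a,a,0),(0,b,b)\}$; in each of these both orderings are $\equiv0$ already by the vanishing step, hence irrelevant to linking the nonzero classes. Thus what remains is to show that the nonzero-evaluating orderings of $(g_{\sigma(2)},\dots,g_{\sigma(n)})$ are connected through admissible adjacent transpositions. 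I would prove this by induction on $n$: reduce the inner commutator $[x_1^{g_1},\dots,x_{\sigma(n-1)}^{g_{\sigma(n-1)}}]$ to a canonical nonzero form by the inductive hypothesis, and then use \eqref{Cbasis3} to bring the intended last variable into the final slot; the accumulated scalar is automatically path-independent, since along nonzero commutators it always equals the intrinsic ratio of the $\mu$'s. Once the vanishing and sorting steps are in place, the dimension comparison closes the argument, and the same skeleton, read off the general identities \eqref{basis1}--\eqref{basis2} together with the multilinear analysis, parallels and extends the treatment of $W_1$ in \cite{FKK}.
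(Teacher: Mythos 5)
Your skeleton is sound and in fact parallels the paper's own strategy: verify the identities hold (Lemma \ref{3.6}), reduce to multilinear components, kill the zero-evaluating monomials via \eqref{Cbasis1} (your Vanishing step is essentially Proposition \ref{monident}), and then prove that all nonzero-evaluating monomials in a fixed good multidegree are proportional modulo $I$, which by your evaluation-functional count gives $I=T_{\mathbb{Z}}(U_1)$ componentwise. The gap is in the Sorting step, and it is fatal as written. Your degeneracy analysis addresses the wrong case: if $\mu_\sigma\neq 0$, then the evaluation coefficient $\beta=(b-a)(c-b-a)$ at any chosen pair of adjacent slots is automatically nonzero, so ``both coefficients vanish'' never occurs along a nonzero commutator; the real obstruction is the one-sided case $\alpha=0\neq\beta$ (e.g.\ $c=a$ or $b=a+c$, where $a$ is the prefix degree), in which \eqref{Cbasis3} degenerates to $\beta\,c_{\sigma'}\equiv 0$ and says nothing about $c_\sigma$. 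This blocked-swap situation occurs for infinitely many configurations, not for your finite list, and it is exactly what makes the problem hard.

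Moreover, your connectivity claim is simply false. Take $n=4$ and $\mathbf{g}=(1,2,1,-2)$, so $c_\sigma=[x_1^{1},\ldots]$ with tail a permutation of $(2,1,-2)$. Using \eqref{multiwitt}: the tail $(2,1,-2)$ gives $\mu=1\cdot(-2)\cdot(-6)=12$ and the reversed tail $(-2,1,2)$ gives $\mu=(-3)\cdot 2\cdot 2=-12$, while the other four orderings all have $\mu=0$ (each contains a prefix whose degree equals the degree of the next entry). Thus the admissible-transposition graph on nonzero classes consists of two isolated vertices: every adjacent swap from either nonzero ordering lands on a zero class. Yet the theorem forces $[x^{1},x^{2},x^{1},x^{-2}]+[x^{1},x^{-2},x^{1},x^{2}]\in I$, so the two classes are proportional mod $I$; deriving this requires moves outside your toolkit, which is precisely why the paper needs the generalized Jacobi identity \eqref{jacobigen}, the long degree-$4$ case analysis of Lemma \ref{degree4}, and the reducibility induction (Definition \ref{redu}, Lemmas \ref{l1}--\ref{l3}, Proposition \ref{good}) that collapses a pair $[x_i,x_j]$ to a single variable and passes to the multidegree $\Delta_{i,j}(\mathbf{g})$. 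Finally, your inductive sketch is circular at the decisive point: ``using \eqref{Cbasis3} to bring the intended last variable into the final slot'' through admissible swaps is itself the connectivity statement you are trying to prove.
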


\section{$\mathbb{Z}$-Graded identities of $U_1$ in characteristic zero} \label{identities}

In this section
$K$ will be a field of characteristic zero. 
We shall prove Theorem \ref{mainresult1}. To this end
we need a series of results.

\begin{lemma}\label{3.6}
	The graded identities \eqref{Cbasis1} and \eqref{Cbasis3} hold for $U_1$.
\end{lemma}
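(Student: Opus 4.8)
The plan is to verify both identities directly by evaluating them on admissible substitutions. The key reduction is that each homogeneous component $L_g$ is at most one-dimensional, being spanned by $e_g$ when $g \in \mathbb{Z}$. Since both polynomials are multilinear and each argument is assigned a fixed degree, an admissible substitution amounts to replacing each variable $x_j^{g_j}$ by a scalar multiple of the single basis vector $e_{g_j}$; by multilinearity the scalars factor out, so it suffices to substitute the basis elements $e_a, e_b, e_c$ themselves and use the structure constants $[e_i,e_j]=(j-i)e_{i+j}$ from \eqref{multiwitt}.

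For \eqref{Cbasis1} this is immediate: substituting $e_i$ for both $x_1^i$ and $x_2^i$ gives $[e_i,e_i]=(i-i)e_{2i}=0$, so the identity holds on every admissible substitution.

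For \eqref{Cbasis3} I would compute the two iterated brackets explicitly. Applying \eqref{multiwitt} twice yields
\begin{align*}
[[e_a,e_b],e_c] &= (b-a)(c-a-b)\,e_{a+b+c},\\
[[e_a,e_c],e_b] &= (c-a)(b-a-c)\,e_{a+b+c}.
\end{align*}
Multiplying the first by $\alpha=(c-a)(b-c-a)$ and the second by $\beta=(b-a)(c-b-a)$, the coefficients of $e_{a+b+c}$ become $(c-a)(b-c-a)(b-a)(c-a-b)$ and $(b-a)(c-b-a)(c-a)(b-a-c)$ respectively.

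The crux of the argument, such as it is, is the elementary bookkeeping observation that $b-c-a=b-a-c$ and $c-a-b=c-b-a$, since these are merely reorderings of the same sum. Using these equalities one sees at once that the two coefficients above coincide, so their difference vanishes on the substitution $(e_a,e_b,e_c)$ and hence on every admissible substitution. I do not expect any genuine obstacle here: the entire content is the correct choice of the scalars $\alpha,\beta$, which are precisely what makes the two sides match after the triple brackets are evaluated. The only care needed is to confirm that checking basis vectors is enough, which follows from multilinearity together with $\dim L_g\le 1$.
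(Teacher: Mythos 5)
Your proof is correct and follows essentially the same route as the paper's: both verify \eqref{Cbasis1} directly from the structure constants in \eqref{multiwitt}, compute $[[e_a,e_b],e_c]=\beta e_{a+b+c}$ and $[[e_a,e_c],e_b]=\alpha e_{a+b+c}$, and conclude that $\alpha\beta-\beta\alpha=0$, using $\dim L_g=1$ together with multilinearity to reduce to basis elements. The only cosmetic difference is that you make the scalar-factoring reduction explicit, while the paper leaves it as a closing remark.
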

\begin{proof}
	Let $U_1=\oplus_{i\in\mathbb{Z}} L_i$ as above. By multiplication \eqref{multiwitt} it is clear that the identities \eqref{Cbasis1} hold in $U_1$. Now we check that the identities \eqref{Cbasis3} also hold. Since  $L_i$ is the span of $e_i$ for each $i\in\mathbb{Z}$, then 	
	\[
	[e_a,e_b,e_c]=[[e_a,e_b],e_c]=(b-a)[e_{a+b},e_c]=(c-a-b)(b-a)e_{a+b+c}=\beta e_{a+b+c}.
	\]
	Similarly $[e_a,e_c,e_b]=(b-a-c)(c-a)e_{a+b+c}=\alpha e_{a+b+c}$. It follows that
	\[
	\alpha [e_a,e_b,e_c]-\beta [e_a,e_c,e_b]=0.
	\]
	The identities \eqref{Cbasis3} hold since the homogeneous components of $U_1$ have dimension $1$.
\end{proof}

In analogy with the associative case we will call \textsl{monomials} the left normed commutators in the free graded Lie algebra.

\begin{proposition}\label{monident}
Over a field of characteristic zero, every graded monomial identity of $U_1$ is consequence of the identities \eqref{Cbasis1}.
\end{proposition}
\begin{proof}
We denote by $J$ the $T_G$-ideal generated by the polynomials \eqref{Cbasis1}. As $\mathrm{char}\ K=0$ we assume that our graded monomial is multilinear. We prove the result by induction on the length $n$ of the monomial. The result is obvious for $n=1$ and for $n=2$, so we suppose $n\geq 3$. Let $M^{\prime}=[x_{1}^{a_1},\ldots, x_{n-1}^{a_{n-1}}]$. If $M^{\prime}\in T_G(U_1)$ then it lies in $J$, by the induction hypothesis, and hence $M\in J$. We assume now that $M^{\prime}\notin T_\mathbb{Z}(U_1)$. Let $g=\| M^\prime\|$ be the $G$-degree of $M'$, then the result of every admissible substitution in $M^{\prime}$ is a scalar multiple of $e_{g}$. 
Therefore $M\in T_\mathbb{Z}(U_1)$ if and only if $[x_{1}^{g},x_{2}^{a_n}]\in T_\mathbb{Z}(U_1)$. The commutator $[x_{1}^{g},x_{2}^{a_n}]$ lies in $T_\mathbb{Z}(U_1)$ if and only if $g=a_n$, and hence $M$ lies in $J$. Thus in all cases $M\in J$, as required.
\end{proof}

We denote by $I$ the $T_\mathbb{Z}$-ideal generated by the polynomials in \eqref{Cbasis1} and \eqref{Cbasis3}.  By Lemma \ref{3.6}, $I\subseteq T_{\mathbb{Z}}(U_1)$ and, as $\mathrm{char}\, K=0$, the opposite inclusion will follow from the inclusions $P_n^{\bf g}\cap T_{\mathbb{Z}}(U_1)\subset I$, for every $n$-tuple ${\bf g}$ of elements in $\mathbb{Z}$. An immediate consequence of the previous proposition is that the inclusion holds if ${\bf g}$ is a bad sequence. Therefore we look for a characterization of the bad sequences.

Let ${\bf g}$ be an $n$-tuple of elements in $\mathbb{Z}$. We denote by $\mathcal{C}({\bf g})$ the sequence formed by elements in ${\bf g}$ when we erase all entries that are equal to 0. Moreover $|\mathcal{C}({\bf g})|$ stands for the length of the new tuple, after removing the zero entries, and $|\mathcal{C}({\bf g})|_{g}$ denotes how many times the element $g\in \mathbb{Z}$ appears in
${\bf g}$, and if $g\ne 0$, also in $\mathcal{C}({\bf g})$.

\begin{lemma}\label{monosem0}
Let ${\bf g}$ be an $n$-tuple in $\mathbb{Z}^{n}$ with at least two nonzero elements. The $n$-tuple ${\bf g}$ is bad if and only if $\mathcal{C}({\bf g})$ is bad.
\end{lemma}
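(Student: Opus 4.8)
The plan is to translate goodness into a purely combinatorial statement about orderings of the multiset of degrees, and then to insert or delete the zero entries while keeping track of the coefficients that arise. Since badness is by definition the negation of goodness, it suffices to prove the equivalent assertion that $\mathbf{g}$ is \emph{good} if and only if $\mathcal{C}(\mathbf{g})$ is good.

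First I would record a reformulation of goodness. Because every homogeneous component of $U_1$ is one-dimensional and $\mathrm{supp}\,U_1=\mathbb{Z}$, an admissible substitution sends $x_i^{g_i}$ to $c_i e_{g_i}$; by multilinearity a polynomial $f\in P_n^{\mathbf g}$ lies in $T_\mathbb{Z}(U_1)$ if and only if $f(e_{g_1},\ldots,e_{g_n})=0$. As $P_n^{\mathbf g}$ is spanned by the left-normed commutators $[x_{\sigma(1)}^{g_{\sigma(1)}},\ldots,x_{\sigma(n)}^{g_{\sigma(n)}}]$, $\sigma\in S_n$, and each of these evaluates, exactly as in the computation in the proof of Lemma \ref{3.6}, as
\[
[e_{a_1},\ldots,e_{a_n}]=\Big(\prod_{k=2}^{n}\big(a_k-(a_1+\cdots+a_{k-1})\big)\Big)e_{a_1+\cdots+a_n},\qquad a_k=g_{\sigma(k)},
\]
it follows that $\mathbf g$ is good if and only if there is an ordering $(a_1,\ldots,a_n)$ of the entries of $\mathbf g$ for which all the factors $a_k-(a_1+\cdots+a_{k-1})$, $2\le k\le n$, are nonzero. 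Applying the same statement to the tuple with its zeros deleted characterizes goodness of $\mathcal C(\mathbf g)$.

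With this in hand, the implication ``$\mathcal C(\mathbf g)$ good $\Rightarrow \mathbf g$ good'' is obtained by insertion. I would take an ordering $(a_1,\ldots,a_r)$ of $\mathcal C(\mathbf g)$, with $r\ge 2$, having all factors nonzero, and form the ordering of $\mathbf g$ that places all the zero entries immediately after $a_1$. Since $a_1\ne 0$, the partial sum preceding each inserted zero equals $a_1$, so every inserted zero contributes the nonzero factor $-a_1$; and because inserting zeros leaves every other partial sum untouched, the factors attached to $a_2,\ldots,a_r$ are precisely the original nonzero factors. Hence the new ordering has all factors nonzero and $\mathbf g$ is good. Conversely, for ``$\mathbf g$ good $\Rightarrow \mathcal C(\mathbf g)$ good'' I would take an ordering $(b_1,\ldots,b_n)$ of $\mathbf g$ with all factors nonzero and delete its zero entries, obtaining an ordering of $\mathcal C(\mathbf g)$ in which the nonzero entries retain their relative order. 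Deleting zeros again leaves all partial sums unchanged, so the factor attached to each surviving non-leading entry equals the corresponding (nonzero) factor of $(b_1,\ldots,b_n)$; the only factors lost are those of the zeros and possibly that of the former leading entry, so the product of the remaining factors is still nonzero and $\mathcal C(\mathbf g)$ is good.

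The coefficient bookkeeping is routine, and the crux that deserves care is the reformulation of the first step: one must be certain that goodness is faithfully detected by the single evaluation at $(e_{g_1},\ldots,e_{g_n})$, which rests on the one-dimensionality of the components together with multilinearity, and that left-normed commutators in all orders span $P_n^{\mathbf g}$. Once goodness is phrased as the existence of an ordering with nonvanishing coefficient, both directions reduce to the single observation that a zero entry influences the coefficient only through the factor $-(\text{preceding partial sum})$ that it itself contributes, while leaving all other partial sums, and hence all other factors, intact. Finally, the hypothesis of at least two nonzero entries guarantees that $\mathcal C(\mathbf g)$ has length $\ge 2$ and, in particular, furnishes a leading nonzero entry $a_1$ to serve as the insertion anchor in the forward direction.
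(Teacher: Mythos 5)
Your proof is correct and follows essentially the same route as the paper's: the forward direction is exactly the paper's move of inserting all zero-degree variables immediately after the leading nonzero variable (the monomial $[x_{1}^{a_1},x_{k+1}^{0},\ldots,x_{n}^{0},x_2^{a_{2}},\ldots,x_k^{a_{k}}]$), and the converse likewise rests on deleting zeros without disturbing the remaining factors. The only difference is presentational: you make the evaluation coefficient $\prod_{k\ge 2}\bigl(a_k-(a_1+\cdots+a_{k-1})\bigr)$ and the spanning-by-left-normed-commutators argument explicit, which in fact renders the converse direction tighter than the paper's sketch (``following this reasoning and renaming the variables if necessary''), since deleting zeros in place avoids having to rearrange the good ordering into the special form.
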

\begin{proof}
First it is easy to verify that if ${\bf h}$ is obtained by reordering the elements of ${\bf g}$ then there exists a graded automorphism of $L\langle X_G \rangle$ that maps $P_n^{\bf g}$ to $P_n^{\bf h}$. Hence we may assume without loss of generality that 
\[
{\bf g}=(a_1,\ldots,a_k,\underbrace{0,\ldots,0}_{n-k})
\]
where each $a_i\neq 0$. Moreover by hypothesis we have $2\le k <n$.

Let $\mathcal{C}({\bf{g}})$ be good; we can suppose it standard. Therefore the monomial
\[
[x_{1}^{a_1},x_{k+1}^{0},\ldots,x_{n}^{0},x_2^{a_{2}},\ldots,x_k^{a_{k}}]
\]
is not a graded identity of $U_1$. But this implies ${\bf g}$ is also good. Conversely, suppose that ${\bf g}$ is good. Again we can assume that ${\bf g}$ is standard. This implies that $[x^{a_1}_1,x^{a_2}_2]$ is not in $T_{\mathbb{Z}}(U_1)$. Following this reasoning and renaming the variables if necessary we can consider that $a_{k+1}=\ldots=a_{n}=0$ and
\[
[x^{a_1}_1,x^0_{k+1},\ldots,x^0_{n},x^{a_2}_2,x^{a_3}_3,\ldots,x^{a_k}_k]
\]
is not a graded identity of $U_1$. This implies that $\mathcal{C}({\bf{g}})$ is also good. 
\end{proof}

The above lemma implies that, with certain abuse of notation, we can use the same symbol ${\bf g}$ to denote the set formed by the remaining  non-zero elements of the original sequence ${\bf g}$.

The following proposition characterizes the bad sequences of $U_1$.
\begin{proposition}\label{good}
Let ${\bf g}$ be a $n$-tuple of integers such that $|\mathcal{C}({\bf g})|>0$. The sequence $\mathcal{C}({\bf g})$ is bad if and only if 
\begin{enumerate}
	\item[(i)] $|\mathcal{C}({\bf g})|\neq 1$,
	\item[(ii)] $\mathcal{C}({\bf g})$ is a sequence of elements in  $g\mathbb{Z}$, for some $g\in \mathbb{Z}$, (that is all entries in our $n$-tuple are multiples of $g$), and 
	\item[(iii)] If some multiple $-\lambda g$, $\lambda>0$, of the integer $-g$ appears in ${\bf g}$ then 
	\[
	\mathcal{C}({\bf g})=(\lambda_1(-g),\underbrace{g,\ldots,g}_{\lambda_1},\lambda_2(-g),\underbrace{g,\ldots,g}_{\lambda_2},\ldots,\lambda_l(-g),\underbrace{g,\ldots,g}_{\lambda_l}, g,\ldots, g),
	\]
up to a permutation. Here $0<\lambda_1\leq \lambda_2\leq \ldots\leq \lambda_l$. Moreover $|\mathcal{C}({\bf g})|_{g}\geq \lambda_1+\cdots+\lambda_l+2$. Otherwise, 	$\mathcal{C}({\bf g})$ is a sequence formed only by $g$.
\end{enumerate}
\end{proposition}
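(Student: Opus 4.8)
The plan is to translate the statement into a purely combinatorial assertion about orderings of the nonzero entries, and then to attack the two implications separately. Recall that $P_m^{\bf g}$ is spanned by left-normed commutators, and that by anticommutativity we may place any variable first; since $\dim (U_1)_i\le 1$ and $\mathrm{supp}\ U_1=\mathbb{Z}$, every admissible evaluation is determined on the basis $e_i$. For an ordering $(b_1,\dots,b_m)$ of the entries of $\mathcal{C}({\bf g})$, writing $s_0=0$ and $s_j=b_1+\cdots+b_j$, iterating \eqref{multiwitt} gives
\[
[e_{b_1},\dots,e_{b_m}]=\Big(\prod_{k=2}^{m}(b_k-s_{k-1})\Big)\,e_{s_m},
\]
and $e_{s_m}\neq 0$ because $U_1$ has full support. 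Hence $\mathcal{C}({\bf g})$ is good if and only if some ordering of its entries satisfies $b_k\neq s_{k-1}$ for all $k\ge 2$, equivalently no partial sum $s_k$ equals twice its predecessor $s_{k-1}$. I will use throughout that this condition is invariant under rescaling all entries by a nonzero integer and under the sign change $b\mapsto -b$ (which sends $s_k\mapsto -s_k$); so I may assume $\gcd(\mathcal{C}({\bf g}))=1$ and, for the converse, that $T:=\sum_i b_i\ge 0$.

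For the implication that (i)--(iii) force $\mathcal{C}({\bf g})$ to be bad, I first observe that, after rescaling to $\gcd 1$ and a sign change if necessary, the hypotheses say that the distinguished generator is $1$, that every \emph{positive} entry equals $1$, and that the inequality $|\mathcal{C}({\bf g})|_g\ge \lambda_1+\cdots+\lambda_l+2$ becomes exactly $T\ge 2$ (the pure case being $T=m\ge 2$). Now fix any ordering and consider the last index $j$ with $s_j\le 1$; since $s_m=T\ge 2$ such a $j<m$ exists. The step $s_j\to s_{j+1}$ raises the value from $\le 1$ to $\ge 2$, so it must use a positive entry, which by hypothesis equals $1$; thus $s_{j+1}=s_j+1\ge 2$ forces $s_j=1$ and $b_{j+1}=1=s_j$, a zero factor. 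As the ordering was arbitrary, $\mathcal{C}({\bf g})$ is bad. Condition (i) is needed separately, since a single entry gives the empty product $1\neq 0$ and is always good.

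For the converse I prove the contrapositive: if $\mathcal{C}({\bf g})$ (with $\gcd 1$, $m\ge 2$, and $T\ge 0$) is not of the excluded form, then it admits a good ordering. I argue by strong induction on $m$, the cases $m\le 2$ being immediate. The reduction step is to locate an entry $b$ with $2b\neq T$ such that $S':=\mathcal{C}({\bf g})\setminus\{b\}$ is again not of the excluded form; granting this, the induction hypothesis supplies a good ordering of $S'$, and appending $b$ preserves all earlier factors and makes the final factor $b-(T-b)=2b-T\neq 0$, producing a good ordering of $\mathcal{C}({\bf g})$.

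The crux, and the step I expect to be the main obstacle, is the existence of such a removable entry $b$. When $T\ge 2$ the failure of (iii) guarantees a positive entry $\ge 2$, and the natural move is to delete a largest positive entry, thereby dropping the sum and breaking the one-sided generator pattern; when $T\in\{0,1\}$ both excluded patterns fail a priori and one deletes a suitable extreme entry so that $T'$ stays in a safe range. The delicate bookkeeping is to rule out the possibility that \emph{every} admissible $b$ leaves a structured $S'$: I expect that forcing all these removals to be structured pins $\mathcal{C}({\bf g})$ down to a rigid configuration (all entries on one side equal to the generator with a borderline count), which is precisely the excluded form and yields the contradiction. The boundary situations with many repeated generators, with large opposite-sign entries, and with $T$ small are where the careful case analysis is concentrated, and verifying non-structure of $S'$ in each is the technical heart of the argument.
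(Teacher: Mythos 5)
Your combinatorial reformulation is correct and is a genuinely cleaner framework than the paper's: since every homogeneous component of $U_1$ is one-dimensional, the field is infinite, and left-normed monomials span $P_m^{\bf g}$, the sequence $S=\mathcal{C}({\bf g})$ is good if and only if some ordering $(b_1,\dots,b_m)$ of its entries makes every factor $b_k-s_{k-1}$ nonzero, and this condition is invariant under rescaling and sign change. Your proof of the implication ``(i)--(iii) imply bad'' (locating the last index $j$ with $s_j\le 1$ and forcing a zero factor at the next step) is complete and rigorous --- indeed more convincing than the paper's treatment, which dismisses that direction as clear. The paper proves the converse by a case analysis on a maximal non-identity prefix and the linear relations among the degrees; you instead propose an induction on $m$ that removes one entry at a time and appends it last.

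The converse direction of your proposal, however, has a genuine gap: everything hinges on the claim that every non-excluded multiset $S$ with $|S|\ge 3$ contains an entry $b$ with $2b\neq T$ such that $S\setminus\{b\}$ is again non-excluded, and this is exactly what you leave as an expectation. The heuristics you offer fail on small examples. For $S=\{2,1,1\}$ (so $T=4$, non-excluded) the largest positive entry has $2b=T$, so it cannot be appended last. Worse, for $S=\{4,1,-1,-1,-1\}$ (so $T=2$; non-excluded, since the only candidate generators are $g=\pm1$, $g=1$ fails because of the entry $4$, and $g=-1$ fails the count condition $3\ge 4+1+2$), deleting the largest positive entry leaves $\{1,-1,-1,-1\}$, which \emph{is} the excluded pattern with $g=-1$ (three copies of $g$, one entry $1=\lambda_1(-g)$ with $\lambda_1=1$, and $3\ge\lambda_1+2$), so the induction hypothesis says nothing about it; deleting the entry $1$ instead violates $2b\neq T$; only deleting one of the $-1$'s works. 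Thus the correct choice of $b$ depends on the configuration in a way your sketch does not control. Note, moreover, that once your forward implication is granted, the removable-entry claim is logically \emph{equivalent} to the converse you are trying to prove: if the theorem holds, a good ordering of a non-excluded $S$ exists and its last entry is removable, while your induction derives the theorem from the claim. So the proposal reduces the proposition to an equivalent unproven combinatorial statement; the case analysis you call ``the technical heart'' is precisely the content of the paper's proof, and without it the argument does not close.
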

\begin{proof}
By using the argument of the beginning of the proof of Lemma~\ref{monosem0}, it is clear that if (i), (ii) and (iii) occur, then ${\bf g}$ is bad. 

Reciprocally, as a consequence of Lemma~\ref{monosem0}, we can consider only tuples where all elements are nonzero. This means we assume ${\bf g}=\mathcal{C}({\bf g})$. Let $j$ be the largest positive integer such that  
\[
[x_{i_1}^{a_{i_1}},\ldots, x_{i_j}^{a_{i_j}}]\notin T_\mathbb{Z}(U_1)
\]
but
\[
[x_{i_1}^{a_{i_1}},\ldots, x_{i_j}^{a_{i_j}}, x_{i_{j+1}}^{a_{i_{j+1}}}]\in T_\mathbb{Z}(U_1)
\]
where $\{x_i^{a_i}\mid i=1,\ldots,n\}$ are variables.

By using an inductive argument we can assume $j=n-1$. Suppose ${\bf g}$ is standard and $a_n=a$. We define $y=[x_{1}^{a_{1}},\ldots, x_{{n-3}}^{a_{{n-3}}}]$, $y_1=x_{{n-2}}^{a_{{n-2}}}$, $y_2=x_{{n-1}}^{a_{{n-1}}}$ and $y_3=x_{{n}}^{a}$. In this case $\|y\|+\|y_1\|+\|y_2\|=\|y_3\|$. The monomial $[y,y_1,y_3,y_2]$ is an identity for $U_1$, and hence one of the following two conditions must be met: either 1) $\|y\|+\|y_1\|=\|y_3\|$ or 2) $\|y\|+\|y_1\|+\|y_3\|=\|y_2\|$. 

Case 1) implies $\|y_2\|=0$ which contradicts the fact that all elements of our $n$-tuple ${\bf g}$ are nonzero. 

Case 2) implies $\|y\|+\|y_1\|=0$, and consequently $\|y_2\|=\|y_3\|$. Suppose that $\|y_3\|\neq \|y\|$, then
$[y,y_2,y_3,y_1]$ is an identity for $U_1$ if and only if $\|y\|+\|y_2\|+\|y_3\|=\|y_1\|$. But this implies $\|y\|=-\|y_2\|$. Therefore $\|y\|=\|y_2\|=\|y_3\|=-\|y_1\|$, or $-\|y\|=\|y_2\|=\|y_3\|=\|y_1\|$. 

If $n=4$ we are done. Hence we consider the case $n>4$. Recall that ${\bf\tilde{g}}=(a_1,\ldots,a_{n-3})$ is good, and we can assume it is standard. If $[x_{n-1}^{a_{n-1}},x_{n-2}^{a_{n-2}}]$ has degree zero in $\mathbb{Z}$ then
$[x_{n-1}^{a_{n-1}},x_{n-2}^{a_{n-2}},x_1^{a_{1}},x_2^{a_{2}},\ldots,x_{n-3}^{a_{n-3}}]$ is not an identity of $U_1$. Recall that we can assume, without loss of generality, $a_{1}\neq 0$. In this case we  repeat the argument given above. Now consider $[x_{n-1}^{a_{n-1}},x_{n-2}^{a_{n-2}}]\in T_\mathbb{Z}(U_1)$. As $n>4$, we suppose $\tilde{y}=[x_{1}^{a_{1}},\ldots, x_{{n-4}}^{a_{{n-4}}}]$, $\tilde{y}_1=x_{n-3}^{a_{n-3}}$, $\tilde{y}_2=x_{n-2}^{a}$, $\tilde{y}_3=x_{n-1}^{a}$, and $\tilde{y}_4=x_{n}^{a}$. We have to analyse two cases: 

1) If $\|\tilde{y}\|=\|\tilde{y}_4\|$, then $\|\tilde{y}_1\|=-2a$, and we consider that $[x^{a_{n-3}}_{n-3},x^{a_{n-2}}_{n-2},x^{a_{n-1}}_{n-1}]$ has degree zero in $\mathbb{Z}$. Then 
$[x^{a_{1}}_1,x^{a_{n-3}}_{n-3},x^{a_{n-2}}_{n-2},x^{a_{n-1}}_{n-1},x^{a_{2}}_2,\ldots,x^{a_{n-4}}_{n-4},x^{a}_{n}]$ is not an identity of $U_1$. As $a_1\neq 0$ we can once again repeat the argument given above. 

2) If $\|\tilde{y}\|\neq \|\tilde{y}_4\|$ then we consider  
$Y=[\tilde{y}, \tilde{y}_2,\tilde{y}_3,\tilde{y}_4,\tilde{y}_1]$. As $Y$ is also an identity for $U_1$, 
if $n=5$ then the only case to consider is $\|\tilde{y}\|+3\|\tilde{y}_4\|=\|\tilde{y}_1\|$ which implies $\|\tilde{y}_1\|=\|\tilde{y}_4\|$, and the result follows. For $n>5$, 
we consider $\|\tilde{y}\|=0$, this implies that if $\tilde{y}=[\tilde{x},x_1]$ then $\|\tilde{x}\|=-{\|x_1\|}={a}$. In this last case we have the identity 
$[\tilde{x},\tilde{y}_1, \tilde{y}_2,\tilde{y}_3,\tilde{y}_4,x_1]$ of $U_1$. 
We can assume without loss of generality that ${\|x_1\|}\neq \|\tilde{y}_4\|$. Thus we repeat this argument a finitely many times in order to obtain statement (ii). Statement (iii) follows easily repeating the above arguments.
\end{proof}

\begin{proposition}\label{ll1}
	Every graded identity for $U_1$ of degree $\leq 4$ lies in $I$.
\end{proposition}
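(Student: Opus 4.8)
The plan is to fix the multidegree and run a dimension count. Since $\mathrm{char}\, K=0$, the decomposition \eqref{decomp} reduces the statement to proving $P_n^{\bf g}\cap T_{\mathbb{Z}}(U_1)\subseteq I$ for every tuple ${\bf g}=(g_1,\dots,g_n)$ with $n\le 4$. If ${\bf g}$ is bad, then $P_n^{\bf g}\subseteq T_{\mathbb{Z}}(U_1)$, and since $P_n^{\bf g}$ is spanned by left-normed commutators, each of these is then a monomial identity; by Proposition \ref{monident} every such monomial lies in the $T_{\mathbb{Z}}$-ideal generated by \eqref{Cbasis1}, hence in $I$, so $P_n^{\bf g}\subseteq I$. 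Thus I may assume throughout that ${\bf g}$ is good.

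Next I record the numerical principle that drives the argument. For a multilinear $f\in P_n^{\bf g}$ every admissible substitution is, by multilinearity, a scalar multiple of $(e_{g_1},\dots,e_{g_n})$, so $f\in T_{\mathbb{Z}}(U_1)$ if and only if $f(e_{g_1},\dots,e_{g_n})=0$; as this value lies in the one-dimensional component $L_{g_1+\cdots+g_n}$, goodness of ${\bf g}$ yields $\dim_K P_n^{\bf g}/(P_n^{\bf g}\cap T_{\mathbb{Z}}(U_1))=1$. Because $I\subseteq T_{\mathbb{Z}}(U_1)$ by Lemma \ref{3.6}, the canonical map induces a surjection $P_n^{\bf g}/(P_n^{\bf g}\cap I)\twoheadrightarrow P_n^{\bf g}/(P_n^{\bf g}\cap T_{\mathbb{Z}}(U_1))$. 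Hence it suffices to show $\dim_K P_n^{\bf g}/(P_n^{\bf g}\cap I)\le 1$: the surjection is then an isomorphism and $P_n^{\bf g}\cap I=P_n^{\bf g}\cap T_{\mathbb{Z}}(U_1)$, which is exactly the required inclusion.

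To obtain this bound I reduce every spanning monomial of $P_n^{\bf g}$ to a single one modulo $I$. Recall that $P_n^{\bf g}$ is spanned by the $(n-1)!$ left-normed commutators $[x_1,x_{\sigma(2)},\dots,x_{\sigma(n)}]$ with the first entry fixed. Modulo $I$ two moves are available: the identity \eqref{Cbasis1} annihilates any monomial containing an inner commutator of two equal-degree arguments, while the relation \eqref{Cbasis3}, applied to a three-fold sub-commutator and then bracketed with the remaining variable (legitimate since $I$ is a $T_{\mathbb{Z}}$-ideal, hence closed under bracketing with further variables and under substituting a commutator such as $[x_1,x_j]$ for a single variable), transposes two adjacent non-initial entries up to the scalar factor $\alpha/\beta$ of \eqref{Cbasis3}. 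For $n\le 3$ these moves directly relate the at most two spanning monomials. For $n=4$ there are six spanning monomials; the transpositions of their last two entries and the transpositions of their middle two entries link these six monomials into a single cycle, so that propagating the resulting proportionalities around the cycle collapses all of them to scalar multiples of one monomial, giving $\dim_K P_4^{\bf g}/(P_4^{\bf g}\cap I)\le 1$.

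The main obstacle is that this propagation is valid only when the scalars $\alpha,\beta$ produced by \eqref{Cbasis3} at each step are nonzero; for special multidegrees it may happen that $\alpha$ or $\beta$ vanishes, or that an inner sub-commutator is itself an identity, forced either by a repeated degree or by a degree-$0$ entry (which acts as a scalar on each homogeneous component of $U_1$). In such degenerate situations an edge of the cycle degrades and must be examined directly: one discards the monomials that become identities by Proposition \ref{monident}, re-routes a transposition through a different adjacent pair of entries, or removes zero entries via Lemma \ref{monosem0}, and then checks that the surviving non-identity monomials still collapse to a single one. Since $n\le 4$ there are only finitely many such degenerate types, so this finite case analysis---the technical heart of the argument---completes the proof.
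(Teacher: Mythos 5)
Your framework is correct as far as it goes, and it is essentially a repackaging of the paper's own strategy: handling bad tuples through Proposition \ref{monident}, observing that a good tuple forces $\dim_K P_n^{\bf g}/(P_n^{\bf g}\cap T_{\mathbb{Z}}(U_1))=1$, and reducing the proposition to the claim that all non-identity monomials in $P_n^{\bf g}$ are pairwise equivalent modulo $I$ — this is exactly the content of the paper's Lemma \ref{incl}, and your degree-$3$ argument coincides with Lemma \ref{l310}.

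The gap is in degree $4$. The ``finite case analysis'' you defer in your last paragraph is not a routine verification: it is the entire content of the paper's proof (Lemma \ref{degree4}, cases (i)--(v), roughly two pages), and it cannot be completed with the moves you allow yourself, namely adjacent transpositions from \eqref{Cbasis3}, discarding monomial identities via Proposition \ref{monident}, and deleting zero entries via Lemma \ref{monosem0}. Concretely, take $\|x_1\|=1$, $\|x_2\|=2$, $\|x_3\|=1$, $\|x_4\|=3$. Then $[x_1,x_3,x_2,x_4]$, $[x_1,x_3,x_4,x_2]$ and $[x_1,x_2,x_4,x_3]$ are monomial identities (the first two because $\|x_1\|=\|x_3\|$, the third because $\|x_1\|+\|x_2\|=\|x_4\|$), while $[x_1,x_2,x_3,x_4]$, $[x_1,x_4,x_3,x_2]$ and $[x_1,x_4,x_2,x_3]$ are not, and the tuple is good. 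In your six-cycle of transpositions, both neighbors of $[x_1,x_2,x_3,x_4]$ are identities, so this vertex is isolated in the induced graph on non-identity monomials: no sequence of adjacent transpositions, however re-routed, and no removal of zero entries (there are none) relates it to $[x_1,x_4,x_3,x_2]$. Bridging such a break requires genuinely new relations — the free-Lie-algebra rebracketing $[y,u,v]=[y,[u,v]]+[y,v,u]$ combined with membership of monomials in $I$, and above all the four-variable identity \eqref{jacobigen} — and exploiting these is precisely how the paper's cases (iii)--(v) proceed. Since your proposal neither invokes these tools nor carries out the analysis, the bound $\dim_K P_4^{\bf g}/(P_4^{\bf g}\cap I)\le 1$ remains unproved in exactly the configurations where the proposition has real content.
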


We will separate the proof of the proposition into two lemmas.
\begin{lemma}\label{l310}
		Every polynomial identity of degree 3 for $U_1$ lies in $I$.
	\end{lemma}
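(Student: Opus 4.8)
The plan is to prove Lemma~\ref{l310} by reducing everything to identities of the form \eqref{Cbasis3} via a case analysis on the $\mathbb{Z}$-degrees of the three variables. Since $\mathrm{char}\, K = 0$, it suffices to treat multilinear identities, so I consider a general multilinear polynomial
\[
f = \mu_1[x_1^a, x_2^b, x_3^c] + \mu_2[x_1^a, x_3^c, x_2^b]
\]
in the three variables $x_1^a$, $x_2^b$, $x_3^c$ (after using the Jacobi identity to rewrite any third arrangement in terms of these two left-normed monomials, and noting that any identity in $P_3^{\bf g}$ is spanned by such $f$ for the given degree triple $(a,b,c)$). The goal is to show that if $f \in T_\mathbb{Z}(U_1)$, then $f \in I$.

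First I would dispose of the \textbf{bad} degree triples. By Proposition~\ref{good}, a triple $(a,b,c)$ of nonzero entries with $|\mathcal{C}({\bf g})| = 3$ that is bad must have all entries equal to a common $g$ (the ``multiples of $g$'' condition forces this for length three when no nonzero cancellation pattern as in (iii) fits three entries), and if some entry is $0$ the problem collapses to degree $\leq 2$, already handled by Proposition~\ref{monident}. For a bad triple the whole space $P_3^{\bf g}$ lies in $T_\mathbb{Z}(U_1)$, and Proposition~\ref{monident} shows every monomial identity is a consequence of \eqref{Cbasis1}, so $f \in J \subseteq I$. Thus I may assume $(a,b,c)$ is a \textbf{good} triple with distinct-enough entries that the commutator $[x_1^a, x_2^b, x_3^c]$ is not an identity.

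For a good triple, the key computation is exactly the one in Lemma~\ref{3.6}: evaluating on the basis elements $e_a$, $e_b$, $e_c$ gives $[e_a,e_b,e_c] = \beta\, e_{a+b+c}$ and $[e_a,e_c,e_b] = \alpha\, e_{a+b+c}$, where $\alpha = (c-a)(b-c-a)$ and $\beta = (b-a)(c-b-a)$. Since $e_{a+b+c} \neq 0$ and the component $L_{a+b+c}$ is one-dimensional, the condition $f \in T_\mathbb{Z}(U_1)$ becomes the single scalar equation $\mu_1 \beta + \mu_2 \alpha = 0$. When both $\alpha$ and $\beta$ are nonzero, this says $(\mu_1, \mu_2)$ is a scalar multiple of $(\alpha, -\beta)$, so $f$ is a scalar multiple of the generator $\alpha[x_1^a, x_2^b, x_3^c] - \beta[x_1^a, x_3^c, x_2^b]$ appearing in \eqref{Cbasis3}, hence $f \in I$. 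The remaining subcase is when $\alpha$ or $\beta$ vanishes: then one of the two monomials is itself an identity (it is a monomial identity for the good triple, so by Proposition~\ref{monident} it lies in $J \subseteq I$), and the equation $\mu_1\beta + \mu_2\alpha = 0$ forces $f$ to be a scalar multiple of the \emph{other} monomial, which must then also be an identity and likewise lie in $J \subseteq I$.

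The main obstacle I anticipate is not any single computation but the bookkeeping needed to be sure the case division is exhaustive: I must confirm that every multilinear element of $P_3^{\bf g}$ really is spanned by the two monomials $[x_1^a,x_2^b,x_3^c]$ and $[x_1^a,x_3^c,x_2^b]$ after applying anti-symmetry of the bracket and the Jacobi identity, and that the degree-reduction to $\leq 2$ (when a $0$ appears) and the appeal to Proposition~\ref{monident} for monomial identities genuinely cover the degenerate triples. Once the span is pinned down, the argument is the elementary linear-algebra observation that a one-dimensional target component turns membership in $T_\mathbb{Z}(U_1)$ into one linear relation among the coefficients, matched precisely by the defining relation of \eqref{Cbasis3}.
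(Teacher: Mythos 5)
Your proposal is correct and takes essentially the same route as the paper's proof: write the identity in the basis $\{[x_1^a,x_2^b,x_3^c],\,[x_1^a,x_3^c,x_2^b]\}$ of $P_3^{\bf g}$, dispose of the cases where a monomial is itself an identity via Proposition~\ref{monident}, and in the remaining case (where $\alpha\beta\neq 0$) use the relation \eqref{Cbasis3} to force the identity to be proportional to that generator. The only cosmetic difference is that you solve the evaluation equation $\mu_1\beta+\mu_2\alpha=0$ directly to exhibit $f$ as a scalar multiple of the generator, whereas the paper reduces $f$ modulo $I$ to a single non-identity monomial and concludes that its coefficient must vanish; these are the same linear-algebra fact organized in two equivalent ways.
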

	\begin{proof}
		Consider a multilinear identity of degree 3 in $U_1$
\[
f(x^a_1, x^b_2, x^c_3) = \lambda_1[x^a_1,x^b_2, x^c_3] + \lambda_2[x^a_1,x^c_3, x^b_2].
\]
If at least one of the monomials $[x^a_1,x^b_2, x^c_3]$ or $[x^a_1,x^c_3, x^b_2]$ lies in $T_{\mathbb{Z}}(U_1)$,
the result is immediate (use Lemma~\ref{monosem0}). So we suppose that neither of the monomials is an identity for $U_1$. By \eqref{Cbasis3} there exist $\alpha$, $\beta\in K$ such that
\[
\alpha[x^a_1,x^b_2, x^c_3] \equiv \beta 	[x^a_1,x^c_3, x^b_2]\pmod{I}
\]
As $[x^a_1,x^b_2, x^c_3]$ and $[x^a_1,x^c_3, x^b_2]$ are not identities of $U_1$, we have that $\alpha$ and $\beta$ are nonzero scalars. Hence
\[
[x^a_1,x^b_2, x^c_3] \equiv \alpha^{-1}\beta 	[x^a_1,x^c_3, x^b_2]\pmod{I}
\]
This implies that
\[
f\equiv (\lambda_1\alpha^{-1}\beta+\lambda_2)[x^a_1,x^c_3, x^b_2]\pmod{I}
\]
and Lemma~\ref{monosem0} implies the result.
	\end{proof}

The identity
\begin{equation}\label{jacobigen}
[x_1, x_2, x_3, x_4] + [x_2, x_1, x_4, x_3] + [x_4, x_3, x_2, x_1] + [x_3, x_4, x_1, x_2] = 0
\end{equation}
holds in every Lie algebra (just write it as $[[x_1,x_2],[x_3,x_4]] + [[x_3,x_4],[x_1,x_2]]=0$). In particular, the previous identity also holds for every graded Lie algebra.

Let $n$ be a positive integer. Given ${\bf g}\in \mathbb{Z}^n$ and a subset $\mathfrak{S}=\{s_1,\dots,s_{n}\}\subset \mathbb{N}$, with $s_1<\ldots<s_{n}$, denote by $P_{\mathfrak{S}}^{\bf g}$ the subspace of $L\langle X_\mathbb{Z} \rangle$ of the multilinear polynomials in the variables $\{x_{s_1}^{ a_1},\ldots, x_{s_{n}}^{a_n}\}$. The elements of $\mathbb{Z}$ and the $n$-tuple ${\bf g}$ may be omitted from the notation if no ambiguity arises, thus we write $P_{\mathfrak{S}}$ and $x_i$ instead of $P_\mathfrak{S}^{\bf g}$ and $x_{i}^{a_i}$, respectively. Let $\sigma$ be a permutation in $S_{n-1}$. Denote by $N_{\sigma}$ the monomial $[x_{s_n},x_{s_{\sigma(1)}},\cdots,x_{s_{\sigma(n-1)}}]$. The set $\{N_\sigma \mid \sigma \in S_{n-1}\}$ is a basis for the vector space $P_\mathfrak{S}$.

We use the notation $\sim$ for the equivalence in Definition \ref{sim} relative to $I$. We indicate above the symbol $\sim$ the identity or result from which the equivalence follows, thus for example $M\stackrel{L.\, \ref{l310}}{\sim} N$ means that the equivalence $M\sim N$ follows from Lemma \ref{l310}.
 
\begin{lemma}\label{degree4}
Let ${\bf a}=(a_1,a_2,a_3,a_4)$ be a good tuple. If $N_{\sigma}, N_{\tau} \notin T_\mathbb{Z}(U_1)$ for some $\sigma, \tau\in S_3$ then $N_\sigma	\sim N_{\tau}$. In particular, every graded identity of degree 4 for $U_1$ lies in $I$.
\end{lemma}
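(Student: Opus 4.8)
The plan is to reduce the displayed ``In particular'' clause to the first assertion of the lemma and to prove that assertion by producing enough consequences of \eqref{Cbasis1} and \eqref{Cbasis3} inside $I$ to collapse the six-dimensional space $P_{\mathfrak S}$ (spanned by the $N_\sigma$, $\sigma\in S_3$) to at most one dimension modulo $I$. Since every homogeneous component of $U_1$ is one-dimensional, each $N_\sigma$ evaluates into $L_s$ with $s=a_1+a_2+a_3+a_4$, so for a good tuple $P_{\mathfrak S}/(P_{\mathfrak S}\cap T_{\mathbb Z}(U_1))$ is one-dimensional; as $I\subseteq T_{\mathbb Z}(U_1)$ by Lemma~\ref{3.6}, it suffices to prove that any two non-identity monomials $N_\sigma,N_\tau$ are proportional modulo $I$. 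Granting this, the ``In particular'' clause follows: decomposing a degree $4$ identity into its multihomogeneous parts via \eqref{decomp}, the bad multidegrees lie in $J\subseteq I$ by Proposition~\ref{monident}; in a good multidegree the identity monomials again lie in $J\subseteq I$ by Proposition~\ref{monident}, while the non-identity ones collapse onto a single reference $N_{\sigma_0}$, whose surviving scalar coefficient must vanish upon evaluation in $U_1$.

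First I would record the evaluations. Writing $x_i$ for $x_{s_i}^{a_i}$ and $p=a_4+a_{\sigma(1)}$, repeated use of \eqref{multiwitt} sends the admissible tuple to $c_\sigma e_s$ with
\[
c_\sigma=(a_{\sigma(1)}-a_4)(a_{\sigma(2)}-p)(2a_{\sigma(3)}-s),
\]
so $N_\sigma\in T_{\mathbb Z}(U_1)$ exactly when one of these three linear factors vanishes. Next I would generate the ``adjacent'' relations. Substituting the homogeneous block $[x_4,x_{\sigma(1)}]$ into the first slot of \eqref{Cbasis3} swaps the last two arguments and gives $\alpha N_\sigma-\beta N_{\sigma\circ(2\,3)}\in I$; applying \eqref{Cbasis3} to the inner triple $[x_4,x_{\sigma(1)},x_{\sigma(2)}]$ and then bracketing with $x_{\sigma(3)}$ (using that $I$ is an ideal) swaps the middle two arguments and gives a relation between $N_\sigma$ and $N_{\sigma\circ(1\,2)}$. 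A short computation identifies the scalars in these relations with products of two of the three linear factors of the relevant $c_\sigma$, so both scalars are nonzero precisely when the two monomials are non-identities. Hence each relation identifies two adjacent, non-identity vertices of the Cayley hexagon of $S_3$ with generators $(1\,2),(2\,3)$, up to a nonzero scalar equal to the ratio of their evaluation coefficients.

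The main obstacle is that these adjacent relations need not connect all non-identity monomials: for degenerate good tuples, for instance $(a_1,a_2,a_3,a_4)=(3,1,1,2)$, the vanishing of the middle factor makes two of the $N_\sigma$ identities that disconnect the hexagon into two antipodal arcs of non-identities. To bridge them I would use the Jacobi identity \eqref{jacobigen} to rewrite a monomial modulo the (already trivial) identity monomials, thereby exposing a composite block $[x_i,x_j]$ of degree $a_i+a_j$, apply \eqref{Cbasis3} to the resulting homogeneous triple, and expand back into the $N_\sigma$; in the example above this yields exactly $N_{123}\sim N_{231}$ with the expected evaluation ratio. The crux, which I expect to require the most care, is to carry out this bridging uniformly: using the classification of good tuples in Proposition~\ref{good} to enumerate the finitely many vanishing patterns of the three factors of $c_\sigma$, and checking in each pattern that the composite-block relation connects the two arcs. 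Once connectivity is established, $P_{\mathfrak S}/(P_{\mathfrak S}\cap I)$ is at most one-dimensional, which gives $N_\sigma\sim N_\tau$ and, with the reduction of the first paragraph, the degree $4$ statement.
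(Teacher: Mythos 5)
Your framework is sound and in fact mirrors the paper's own proof: your two ``adjacent'' relations (substituting the homogeneous block $[x_4,x_{\sigma(1)}]$ into the first slot of \eqref{Cbasis3}, and applying \eqref{Cbasis3} to the inner triple and then bracketing with the remaining variable, using that $I$ is an ideal) are exactly the paper's cases (i) and (ii); your evaluation coefficients $c_\sigma=(a_{\sigma(1)}-a_4)(a_{\sigma(2)}-a_4-a_{\sigma(1)})(2a_{\sigma(3)}-s)$ are correct, as is the observation that the scalars $\alpha,\beta$ in each adjacent relation are products of two of the three linear factors, so the relation is an equivalence precisely when both endpoints are non-identities; and your reduction of the ``in particular'' clause is the standard collapsing argument that the paper formalizes separately as Lemma \ref{incl}. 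You also correctly locate the real obstacle (identity monomials disconnecting the hexagon) and the correct bridging tool, namely \eqref{jacobigen} together with \eqref{Cbasis3} applied to a composite block $[x_i,x_j]$, which is again what the paper uses.

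The problem is that your proof stops exactly where the content of the lemma begins. ``Enumerate the finitely many vanishing patterns \dots\ and check in each pattern that the composite-block relation connects the two arcs'' is a plan, not an argument, and the verification is not routine, because the bridge itself can degenerate. In the paper's case (iii), after exposing the block $[x_2,x_3]$ one must contend with the possibility that $[x_4,[x_2,x_3],x_1]$ is itself a graded identity; this forces new degree relations $\|x_3\|=\|x_1\|+\|x_4\|$ and $\|x_4\|=\|x_2\|+\|x_3\|$, a second bridge through $[x_1,x_3,x_2,x_4]$, and a further sub-case when that monomial is also an identity; case (v) similarly terminates only by deriving contradictions ($\|x_4\|=0$, or $\|x_1\|=\|x_2\|$) with standardness. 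These cascading degeneracies are precisely what your enumeration would have to traverse, and nothing in the proposal shows that the bridging succeeds in all of them; your single worked example $(3,1,1,2)$ is the easy kind, where the exposed identity monomials are immediately absorbed into $I$ via Proposition \ref{monident}. A secondary point: Proposition \ref{good} classifies \emph{bad} tuples, so it does not by itself supply the list of vanishing patterns of the factors of $c_\sigma$ inside good tuples; that enumeration would have to be set up from scratch. Until this case analysis is actually carried out, the lemma is not proved.
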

\begin{proof}
	We suppose, without loss of generality, that $[x_4,x_{1},x_{2},x_{3}]$ is not an identity for $U_1$ and $a_1+a_2+a_3\neq a_4$. It is sufficient to prove that for every $\sigma \in S_3$ such that $N_\sigma\notin T_\mathbb{Z}(U_1)$, we have
\[
N_\sigma=[x_4,x_{\sigma(1)},x_{\sigma(2)},x_{\sigma(3)}]\sim [x_4,x_{1},x_{2},x_{3}].
\]
We shall analyse each one of the following cases. 

		(i) If $[x_4,x_{2},x_{1},x_{3}]$ is not an identity for $U_1$ then we apply the identity \eqref{Cbasis3}.
		
		(ii) The case of $[x_4,x_{1},x_{3},x_{2}]$ is dealt with in the same manner as that of $[x_4,x_{2},x_{1},x_{3}]$.
		
		(iii) Suppose the monomial $[x_4, x_3, x_1, x_2]$ is not a graded identity for $U_1$. 

Assume $[x_4,x_1,x_3,x_2]$ is not graded identity for $V$, then the result follows applying the item (ii). Therefore, we can assume without loss of generality that
the polynomial $[x_4,x_1,x_3,x_2]$ is a graded identity for $V$. In this case,  we must have either $\|x_4\|=\|x_1\|$, or $\|x_3\|=\|x_4\|+\|x_1\|$, or else $\|x_4\|+\|x_1\|+\|x_3\|=\|x_2\|$. The first and last of these cannot happen since $N_\sigma$ and $[x_4, x_3, x_1, x_2]$ are not graded identities for $V$. Hence, $\|x_3\|=\|x_4\|+\|x_1\|$. 

Now, consider	$[x_1, x_2, [x_3, x_4]]$ (and consequently $[x_4, x_3, [x_1, x_2]]$) is a graded identity for $U_1$. This implies  $[x_4, x_3, x_2, x_1]$ cannot be a graded identity of $U_1$. Therefore 
		\begin{equation}\label{1eq}
		[x_4, x_3, x_2, x_1]\stackrel{Eq.\ \eqref{Cbasis3}}{\sim} [x_4, x_3, x_1, x_2].
		\end{equation} 
Since $[x_1, x_2, [x_3, x_4]]$ is a graded identity we must have either $\|x_1\|=\|x_2\|$, or $\|x_3\|=\|x_4\|$, $\|x_3\|=\|x_4\|$, or $\|x_1\|=\|x_2\|$, or else $\|x_1\|+\|x_2\|=\|x_3\|+\|x_4\|$. 

The first  cannot happen since  $[x_4, x_3, x_1, x_2]$ is not an identities for $V$. Notice that if $\|x_1\|=\|x_2\|$, we then have 
	$$[x_4,x_{1},x_{2},x_{3}]\sim_J [x_4,x_{2},x_{1},x_{3}]\sim_J[x_4,x_{2},[x_{1},x_{3}]].$$
	On the other hand,
	$$[x_4, x_3, x_1, x_2]\sim_J [x_4, [x_3, x_1], x_2].$$
	And the result follows by identity \eqref{basis2}. Thus, the last item is the only case to consider.

The first two of these cannot happen since ${\bf g}$ is standard and $[x_4, x_3, x_1, x_2]$ is not an identity for $U_1$. Moreover $\|x_3\|=\|x_1\|$ if and only if $\|x_4\|=\|x_2\|$, and we have
\[
[x_1, x_2, x_3, x_4]\equiv_I[x_1, [x_2, x_3], x_4]=[x_4, [x_2, x_3], x_1]+[x_1, x_4,[x_2,x_3]].
\]
But $\|x_3\|=\|x_1\|$ and $\|x_4\|=\|x_2\|$ imply that $[x_1, x_4,[x_2,x_3]]$ lies in $T_\mathbb{Z}(U_1)$, and therefore
\[
[x_1, x_2, x_3, x_4]\equiv_I [x_4, [x_2, x_3], x_1]\stackrel{Eq.\ \eqref{1eq}}{\sim}[x_4, x_3, x_1, x_2].
\]
On the other hand
\[
[x_4, x_1, x_2, x_3]\equiv_I [x_4, [x_1, x_2], x_3]=-[x_1, x_2,x_4, x_3]\stackrel{Eq.\ \eqref{1eq}}{\sim}[x_1, x_2, x_3, x_4].
\]

Therefore we can suppose that the $G$-degree of the variables are pairwise distinct. As $\|x_3\|=\|x_1\|+\|x_4\|$, we then have
\[
[x_4,x_1,x_2,x_3]\equiv_I[x_4,x_1,[x_2,x_3]]\equiv_I[x_4,[x_1,[x_2,x_3]]]+[x_4,[x_2,x_3],x_1].
\]

Therefore 
\[
[x_4,x_1,x_2,x_3]\equiv_I[x_2,x_3,x_1,x_4]+[x_4,[x_2,x_3],x_1].
\]
If $[x_4,[x_2,x_3],x_1]$ is not an identity of $U_1$, we have the result. Otherwise we have $\|x_3\|=\|x_1\|+\|x_4\|$, $\|x_4\|=\|x_2\|+\|x_3\|$,
\begin{equation}\label{monbom}
[x_4,x_1,x_2,x_3]\equiv_I[x_2,x_3,x_1,x_4]\equiv_I[x_1,x_3,x_2,x_4]-[x_1,x_2,x_3,x_4], 
\end{equation}		
		and
\[
[x_4,x_3,x_1,x_2]= [x_1,[x_3,x_4],x_2]\equiv_I[x_1,x_3,x_4,x_2].
\]
		If $[x_1,x_3,x_2,x_4]$ is not an identity we are done. Hence, in the list above, we can add $\|x_1\|+\|x_3\|=\|x_2\|$, and 
		\begin{equation}\label{monbom1}
[x_4,x_1,x_2,x_3]\equiv_I-[x_1,x_2,x_3,x_4].
		\end{equation}
	By \eqref{monbom}, the monomial $[x_3,x_2,x_1,x_4]$ is not an identity of $U_1$, and then the result follows by the above equivalence, since 
\[
[x_4,x_3,x_2,x_1]\stackrel{Eq.\ \eqref{monbom1}}{\sim}[x_3,x_2,x_1,x_4].
\]
We are done.
		
Now we suppose that $[x_1, x_2, [x_3, x_4]]$ (and consequently $[x_4, x_3, [x_1, x_2]]$) is not a graded identity for $U_1$. This clearly follows from Eq.~\eqref{jacobigen}. 
		
		(iv) The case $[x_4, x_3, x_2, x_1]$ is analogous to the above case (iii).
		
		(v) Finally suppose $[x_4,x_2,x_3,x_1]$ is not an identity of $U_1$. If both monomials $[x_4,x_3,x_2,x_1]$ and $[x_4,x_2,x_1,x_3]$ are not identities, then the result follows as above.  Hence we assume the monomial $[x_4,x_3,x_2,x_1]$ is an identity of $U_1$. It follows that either $\|x_4\|=\|x_3\|$, or $\|x_4\|+\|x_3\|=\|x_2\|$, or else	$\|x_4\|+\|x_3\|+\|x_2\|=\|x_1\|$. The latter case is impossible because $[x_4,x_2,x_3,x_1]$ is not an identity of $U_1$. Suppose that $\|x_4\|=\|x_3\|$, then
\begin{equation}\label{xx1}
[x_4,x_2,x_3,x_1]=-[x_2,x_4,x_3,x_1]\equiv_I-[x_2,x_3,x_4,x_1].
\end{equation}		
The identity $[x_4,x_2,x_1,x_3]$ implies $\|x_4\|+\|x_2\|=\|x_1\|$, that is $\|x_3\|+\|x_2\|=\|x_1\|$. Therefore 
	\[
	[x_4,x_2,x_3,x_1]\stackrel{Eq.\ \eqref{xx1}}{\sim}[x_2,x_3,x_4,x_1]\equiv_I[x_1,x_4,[x_2,x_3]]\stackrel{Eq.\ \eqref{Cbasis3}}{\sim}[x_4,x_1,x_2,x_3].
	\]
Now suppose $\|x_4\|+\|x_3\|=\|x_2\|$ and  $\|x_4\|+\|x_2\|=\|x_1\|$. This implies that
			\begin{equation}\label{gg1}
			[x_4,x_1,x_2,x_3]\sim[x_1,x_2,x_4,x_3]
		\end{equation}
		and 
\[
[x_4,x_2,x_3,x_1]\sim [x_4,x_2,[x_3,x_1]]\sim [x_1,x_3,[x_4,x_2]]=[x_1,x_3,x_4,x_2]-[x_1,x_3,x_2,x_4],
\]
that is
				\begin{equation}\label{gg2}
		[x_4,x_2,x_3,x_1]\sim[x_1,x_3,x_4,x_2]-[x_1,x_3,x_2,x_4].
		\end{equation}
		If $[x_1,x_3,x_2,x_4]$ is not an identity of $U_1$, then the result follows since
\[
[x_4,x_2,x_3,x_1]\stackrel{Eq.\ \eqref{gg2}}{\sim} [x_1,x_3,x_2,x_4]\stackrel{Eq.\ \eqref{Cbasis3}}{\sim} [x_1,x_2,x_3,x_4]\stackrel{Eq.\ \eqref{gg1}}{\sim}[x_4,x_1,x_2,x_3].
\]
If $[x_1,x_3,x_2,x_4]$ is an identity then either $\|x_2\|=\|x_1\|+\|x_3\|$ or  $\|x_1\|=\|x_3\|$, in addition to $\|x_4\|+\|x_2\|=\|x_1\|$. If $\|x_2\|=\|x_1\|+\|x_3\|$ holds then
$\|x_1\|+\|x_3\|=\|x_4\|+\|x_3\|$ and thus $\|x_1\|=\|x_4\|$, that is
the graded monomial $[x_4,x_1,x_2,x_3]$ lies in $T_G(U_1)$ which is an absurd. On the other hand, when $\|x_1\|=\|x_3\|$ we also reach an absurd, since $\|x_2\|=\|x_4\|+\|x_3\|=\|x_4\|+\|x_1\|=2\|x_4\|+\|x_2\|$ and then $\|x_4\|=0$. The latter equality implies $\|x_1\|=\|x_2\|$ which contradicts $[x_4,x_2,x_3,x_1]$ to be not graded identity for $V$, since $\|x_1\|=\|x_2\|=\|x_3\|$ and $\|x_4\|=0$. The proof is complete now.
\end{proof}

\begin{lemma}\label{incl}
	Let ${\bf g}\in \mathbb{Z}^n$ be a standard $n$-tuple and let $\sigma\in S_{n-1}$. If $N_{\sigma}\sim [x_{1},\ldots, x_{n}]$, whenever $N_{\sigma}\notin T_\mathbb{Z}(U_1)$,
	then $P_n^{\bf g}\cap T_\mathbb{Z}(U_1)\subseteq I$.
\end{lemma}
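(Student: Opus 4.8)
The plan is to expand an arbitrary element of $P_n^{\bf g}\cap T_\mathbb{Z}(U_1)$ in the monomial basis $\{N_\sigma\mid \sigma\in S_{n-1}\}$ of $P_\mathfrak{S}=P_n^{\bf g}$, and then to show that modulo $I$ every basis monomial collapses either to $0$ or to a scalar multiple of the single commutator $[x_1,\ldots,x_n]$. Concretely, I would fix $f\in P_n^{\bf g}\cap T_\mathbb{Z}(U_1)$ and write $f=\sum_{\sigma\in S_{n-1}}\lambda_\sigma N_\sigma$ with $\lambda_\sigma\in K$, which is legitimate because the $N_\sigma$ form a basis of $P_n^{\bf g}$.

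Next I would split the indices $\sigma$ into two exhaustive classes. If $N_\sigma\in T_\mathbb{Z}(U_1)$, then $N_\sigma$ is a graded monomial identity of $U_1$, so by Proposition~\ref{monident} it is a consequence of the identities \eqref{Cbasis1} and hence lies in $I$; such terms may simply be dropped modulo $I$. If instead $N_\sigma\notin T_\mathbb{Z}(U_1)$, the hypothesis $N_\sigma\sim[x_1,\ldots,x_n]$ together with Definition~\ref{sim} furnishes a nonzero scalar $\mu_\sigma$ with $N_\sigma\equiv\mu_\sigma[x_1,\ldots,x_n]\pmod{I}$; this equivalence is meaningful precisely because $[x_1,\ldots,x_n]\notin T_\mathbb{Z}(U_1)$, which holds since ${\bf g}$ is standard. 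Summing the two contributions yields $f\equiv c\,[x_1,\ldots,x_n]\pmod{I}$, where $c=\sum_{\sigma\,:\,N_\sigma\notin T_\mathbb{Z}(U_1)}\lambda_\sigma\mu_\sigma$.

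Finally I would pin down the scalar $c$. Since $I\subseteq T_\mathbb{Z}(U_1)$ by Lemma~\ref{3.6} and $f\in T_\mathbb{Z}(U_1)$, the relation $f-c[x_1,\ldots,x_n]\in I$ forces $c[x_1,\ldots,x_n]\in T_\mathbb{Z}(U_1)$. But standardness of ${\bf g}$ gives $[x_1,\ldots,x_n]\notin T_\mathbb{Z}(U_1)$, whence $c=0$ and therefore $f\in I$, as required. In effect the argument shows that the hypothesis forces the quotient $P_n^{\bf g}/(P_n^{\bf g}\cap I)$ to be spanned by the class of $[x_1,\ldots,x_n]$, so it is at most one-dimensional, and the only identity it can contain is the trivial one.

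There is no genuine obstacle here beyond careful bookkeeping: the two substantive inputs—Proposition~\ref{monident} to absorb the monomial identities, and the standing hypothesis to reduce every remaining monomial to a single representative—are both available, and the characteristic-zero assumption enters only implicitly through Proposition~\ref{monident} and the fact that $f\in P_n^{\bf g}$ is already multilinear. The one point deserving explicit care is that the dichotomy ``$N_\sigma\in T_\mathbb{Z}(U_1)$ versus $N_\sigma\notin T_\mathbb{Z}(U_1)$'' is exhaustive, so that no basis monomial escapes the two cases and no further relations are needed.
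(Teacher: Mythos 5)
Your proof is correct and takes essentially the same route as the paper's: expand $f$ in the basis $\{N_\sigma \mid \sigma\in S_{n-1}\}$, discard the monomials lying in $T_\mathbb{Z}(U_1)$ modulo $I$ via Proposition~\ref{monident}, collapse all remaining monomials to a scalar multiple of a single non-identity monomial using the hypothesis, and conclude that this scalar is zero because ${\bf g}$ is standard and $I\subseteq T_\mathbb{Z}(U_1)$. The only cosmetic difference is that the paper reduces to an arbitrary fixed $N_\tau$ with $\tau$ outside the set of identity monomials, whereas you reduce directly to $[x_1,\ldots,x_n]$; the argument is otherwise identical.
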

\begin{proof}
	Note that if $\sigma$, $\tau\in S_{n-1}$ are permutations such that $N_{\sigma}$, $N_{\tau}\notin T_\mathbb{Z}(L)$ then $N_{\sigma}\sim N_{\tau}$. Denote $A=\{\sigma \in S_{n-1} \mid N_{\sigma}\in T_\mathbb{Z}(U_1)\}$ and $B=S_{n-1}\setminus A$. Let 
	\[
	f=\sum_{\sigma \in S_{n-1}} \lambda_\sigma N_{\sigma},
	\]
where $\lambda_{\sigma}\in {K}$, be a polynomial in $P_n^{\bf g}\cap T_\mathbb{Z}(U_1)$. Lemma~\ref{monident} implies that 
\[
f\equiv_I \sum_{\sigma \in B} \lambda_\sigma N_{\sigma}.
\]
Let $\tau$ be a permutation in $B$, then $N_{\sigma}\sim N_{\tau}$ for every $\sigma \in B$. Therefore there exists $\lambda \in {K}$ such that $f\equiv_I \lambda N_{\tau}$. Since $f\in T_\mathbb{Z}(U_1)$ and $N_{\tau}\notin T_\mathbb{Z}(U_1)$ it follows that $\lambda=0$, hence $f\in I$. 
\end{proof}

\begin{definition}\label{redu}
	A monomial $M\notin T_\mathbb{Z}(U_1)$ is reducible in the variables $x_i$ and $x_j$ if there exist ${\bf h}\in G^{n-1}$, $\mathfrak{S}\subset \{1,\dots, n\}$ with $|\mathfrak{S}|=n-1$ elements, and $N$ in the  canonical basis of $P_{\mathfrak{S}}^{\bf h}$ such that $M$ is equivalent to the monomial obtained from $N$ by an admissible substitution of $x_{s_k}^{h_k}$ with $[x_i,x_j]$, for some $k<n-1$.
\end{definition}

\begin{lemma}\label{l1}
	Let ${\bf g}\in \mathbb{Z}^n$ be a standard $n$-tuple. Let $\sigma\in S_{n-1}$ be such that if $N_{\sigma}$ is reducible in $x_i$, $x_j$ then $i=n$ or $j=n$. Then $P_n^{\bf g}\cap T_\mathbb{Z}(U_1)\subset I$.
\end{lemma}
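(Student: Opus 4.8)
The plan is to reduce to Lemma~\ref{incl}, whose hypothesis requires that every $\tau\in S_{n-1}$ with $N_\tau\notin T_\mathbb{Z}(U_1)$ satisfy $N_\tau\sim[x_1,\ldots,x_n]$; once this is established, Lemma~\ref{incl} delivers $P_n^{\bf g}\cap T_\mathbb{Z}(U_1)\subset I$ at once. I would run the whole argument by induction on the length $n$, the cases $n\le 4$ being settled by Proposition~\ref{ll1}. The given permutation $\sigma$, reducible only through $x_n$, plays the role of a fixed anchor to which I connect every non-identity $N_\tau$.

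The engine of the argument is the behaviour of $N_\tau$ under transpositions of adjacent entries. Swapping the last two entries of a left-normed commutator is governed directly by~\eqref{Cbasis3}: treating the initial segment as a single homogeneous block, if both monomials are non-identities then the scalars $\alpha,\beta$ are non-zero (as in Lemma~\ref{l310}) and $N_\tau\sim N_{\tau'}$. For an interior swap of two entries $x_p,x_q$, the Jacobi identity (in the form~\eqref{jacobigen}) gives the exact relation $N_\tau=N_{\tau'}+B_{p,q}$, where $B_{p,q}$ is the monomial in which $x_p,x_q$ are merged into the single entry $[x_p,x_q]$. The key dichotomy is this: if $B_{p,q}\in T_\mathbb{Z}(U_1)$ then, being a graded monomial identity, it already lies in $I$ by Proposition~\ref{monident}, and the edge is \emph{clean}, $N_\tau\equiv N_{\tau'}\pmod{I}$; otherwise $B_{p,q}$ is a genuine $(n-1)$-letter non-identity which, after merging $[x_p,x_q]$ into one homogeneous variable of degree $\|x_p\|+\|x_q\|$, is handled by the induction hypothesis (the reduced tuple remaining standard by Lemma~\ref{monosem0} and Proposition~\ref{good}).

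Here the hypothesis on $\sigma$ becomes decisive. Because $x_n$ is the leading letter of every $N_\tau$, a merged block $[x_p,x_q]$ with $p,q\ne n$ always sits in a non-leading slot, so by Definition~\ref{redu} a surviving such block exhibits a reduction of $N_\sigma$ through a non-$x_n$ pair, which is forbidden. Thus along every edge at $\sigma$ coming from a non-$x_n$ swap the block is clean, and I may reorder all entries other than $x_n$ freely, modulo $I$. The only remaining task is to migrate $x_n$ out of the leading position toward the tail; each such move produces a block of the form $[x_n,x_p]$, which is reducible through $x_n$ and is therefore resolved by the length-$(n-1)$ induction. Chaining these reductions, recording the non-zero scalars contributed by~\eqref{Cbasis3} and discarding the blocks that vanish by~\eqref{Cbasis1} or~\eqref{basis2}, yields $N_\sigma\sim[x_1,\ldots,x_n]$ and, by transitivity through the reordering moves above, $N_\tau\sim[x_1,\ldots,x_n]$ for every non-identity $N_\tau$. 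Lemma~\ref{incl} then concludes the proof.

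I expect the main obstacle to be exactly the dichotomy in the second paragraph: deciding, for each block created while relocating $x_n$ and while normalising the non-$x_n$ entries, whether it is an identity (and so disappears into $I$) or must be passed to the inductive hypothesis. This hinges on a case analysis of coincidences among the integers $\|x_i\|$ and their partial sums, a higher-length version of the casework already carried out in Lemma~\ref{degree4}; the subtle point is to verify at every step that the merged, lower-length tuples are still good and standard, so that the induction applies and the scalar ratios coming from~\eqref{Cbasis3} are genuinely non-zero.
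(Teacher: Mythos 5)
Your first two-and-a-half paragraphs reproduce the paper's actual mechanism, and that part is sound: for a swap of two adjacent tail entries $x_i,x_j$ with $i,j\neq n$, the Jacobi relation $N_{\sigma'}-N_{\sigma'\circ(k,k+1)}=B$ combined with \eqref{Cbasis3} (applied with the prefix treated as one homogeneous block, so that $\beta\neq 0$ because $N_{\sigma'}\notin T_\mathbb{Z}(U_1)$) forces a dichotomy: either $B\in T_\mathbb{Z}(U_1)$, hence $B\in I$ by Proposition \ref{monident} and the swap is clean, or $B\sim N_{\sigma'}\sim N_\sigma$, which by Definition \ref{redu} makes $N_\sigma$ reducible in a pair avoiding $n$ --- forbidden by hypothesis. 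Since adjacent transpositions of the tail generate $S_{n-1}$, every $N_\tau$ is a non-identity equivalent to $N_\sigma$; this is precisely the paper's argument (closure of the set $\Sigma=\{\tau\in S_{n-1}\mid N_\tau\sim N_\sigma\}$ under the generating transpositions, so $\Sigma=S_{n-1}$), and the paper's proof needs no induction whatsoever.

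The gap is in your final ``migration'' step and the induction invoked to support it. A surviving block $[x_n,x_p]$ created while moving $x_n$ toward the tail cannot be ``resolved by the length-$(n-1)$ induction'': your inductive hypothesis is Lemma \ref{l1} itself in length $n-1$, and that lemma has a hypothesis you never verify for the merged tuple, namely the existence of a permutation of the $n-1$ merged variables whose monomial is reducible only through pairs containing the last variable. Checking that the merged tuple is ``good and standard'' (which is all you propose to verify) is not the hypothesis of the lemma, and such a permutation need not exist; if instead you mean to assume Theorem \ref{mainresult1} for smaller lengths, you are no longer proving the self-contained statement the paper uses inside its main induction. The decisive point you miss is that the migration is unnecessary: once all $N_\tau$, $\tau\in S_{n-1}$, are mutually equivalent non-identities, the hypothesis of Lemma \ref{incl} is automatic. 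Indeed $[x_1,\ldots,x_n]$ lies in $P_{\mathfrak{S}}$, whose basis is $\{N_\tau\mid\tau\in S_{n-1}\}$, so modulo $I$ it is a scalar multiple of any fixed $N_{\tau_0}$; that scalar is nonzero because ${\bf g}$ is standard, i.e.\ $[x_1,\ldots,x_n]\notin T_\mathbb{Z}(U_1)$, whence $N_\tau\sim[x_1,\ldots,x_n]$ for every $\tau$ without ever moving $x_n$. Deleting the migration step and inserting this one-line remark turns your argument into a correct proof --- essentially the paper's.
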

\begin{proof}
	Let $\Sigma=\{\tau\in S_{n-1}\mid N_{\tau}\sim N_{\sigma}\}$. Given $\sigma^{\prime}\in \Sigma$ and $k\leq n-2$ we have 
	\[
	N_{\sigma^{\prime}}-N_{\sigma^{\prime}\circ (k,k+1)}=[x_n,\cdots, [x_{\sigma^{\prime}(k)},x_{\sigma^{\prime}(k+1)}],\ldots, x_{\sigma^{\prime}(n-1)}].
	\] 
If this monomial does not lie in $T_\mathbb{Z}(U_1)$ then it is equivalent to $N_{\sigma^{\prime}}$. It follows that $N_{\sigma^{\prime}}$ is reducible in $x_{\sigma^{\prime}(k)}$, $x_{\sigma^{\prime}(k+1)}$, which is a contradiction. Therefore $N_{\sigma^{\prime}}\sim N_{\sigma^{\prime}\circ (k,k+1)}$. This equivalence implies $\sigma^{\prime}\circ (k,k+1)\in \Sigma$. Since the transpositions $(k,k+1)$, $k\leq n-2$ generate $S_{n-1}$, we conclude $\Sigma=S_{n-1}$. Therefore Lemma~\ref{incl} implies  $P_n^{\bf g}\cap T_\mathbb{Z}(U_1)\subset I$.
\end{proof}

\begin{lemma}\label{l2}
	Let ${\bf g}\in \mathbb{Z}^n$ be a standard $n$-tuple. Let $\sigma$ be a permutation in $S_{n-1}$ such that $N_{\sigma}\notin T_\mathbb{Z}(U_1)$. If $N_{\sigma}$ is reducible for only one pair $x_i$, $x_j$ of variables with $i$, $j\neq n$ then $P_n^{\bf g}\cap T_\mathbb{Z}(U_1)\subset I$.
\end{lemma}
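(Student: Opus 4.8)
I aim to prove that every non-identity $N_\tau$, $\tau\in S_{n-1}$, is equivalent to $N_\sigma$; Lemma~\ref{incl} will then yield $P_n^{\bf g}\cap T_\mathbb{Z}(U_1)\subset I$. I may assume $n\ge 5$, the cases $n\le 4$ being covered by Proposition~\ref{ll1}. Put $\Sigma=\{\tau\in S_{n-1}\mid N_\tau\sim N_\sigma\}$. Exactly as in the proof of Lemma~\ref{l1}, for $\sigma'\in\Sigma$ and $k\le n-2$ one has
\[
N_{\sigma'}-N_{\sigma'\circ(k,k+1)}=[x_n,\ldots,[x_{\sigma'(k)},x_{\sigma'(k+1)}],\ldots,x_{\sigma'(n-1)}],
\]
and whenever $\{\sigma'(k),\sigma'(k+1)\}\ne\{i,j\}$ this element lies in $T_\mathbb{Z}(U_1)$. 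Indeed, $N_{\sigma'}\sim N_\sigma$ is reducible only in $\{i,j\}$, so it is not reducible in $\{\sigma'(k),\sigma'(k+1)\}$, and by the reasoning of Lemma~\ref{l1} the displayed glue is then a graded identity, hence lies in $I$ by Proposition~\ref{monident}; thus $N_{\sigma'}\sim N_{\sigma'\circ(k,k+1)}$ and $\sigma'\circ(k,k+1)\in\Sigma$. Since such admissible transpositions connect $\sigma$ to every arrangement in which $x_i,x_j$ occur in the same relative order (the only move able to reverse that order being the forbidden exchange of the two), and since an admissible move sends a non-identity to a non-identity (its two monomials are congruent mod $I\subseteq T_\mathbb{Z}(U_1)$), it follows that every arrangement with $x_i,x_j$ in the same order as in $\sigma$ is a non-identity and lies in $\Sigma$.

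It remains to absorb the monomials of the opposite order, which I would do with a single application of \eqref{Cbasis3}. Slide $x_i,x_j$ to adjacent positions $k,k+1$ by admissible transpositions; writing $d$ for the $\mathbb{Z}$-degree of the prefix $[x_n,\ldots]$ and $b=\|x_i\|$, $c=\|x_j\|$, applying \eqref{Cbasis3} to the triple $(\text{prefix},x_i,x_j)$ and bracketing on the right by the suffix gives
\[
(c-d)(b-c-d)\,N_{\sigma'}\equiv(b-d)(c-b-d)\,N_{\sigma'\circ(k,k+1)}\pmod I.
\]
As $N_{\sigma'}$ is a non-identity we already have $(b-d)(c-b-d)\ne0$; hence, as soon as the prefix is chosen with $(c-d)(b-c-d)\ne0$, the opposite-order monomial $N_{\sigma'\circ(k,k+1)}$ is a non-identity and $N_{\sigma'\circ(k,k+1)}\sim N_{\sigma'}\sim N_\sigma$. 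One such bridging monomial places the opposite order inside $\Sigma$; being then reducible only in $\{i,j\}$, the argument of the first paragraph applied to the opposite order shows that every opposite-order non-identity lies in $\Sigma$ as well. Consequently $\Sigma$ contains all non-identity $N_\tau$ and Lemma~\ref{incl} applies.

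The heart of the matter is therefore to produce a prefix with $(c-d)(b-c-d)\ne0$, that is, an admissible placement of the pair whose prefix degree $d$ avoids the two forbidden values $c$ and $b-c$. As $x_i,x_j$ are slid through the monomial, $d$ ranges over $a_n+\{\sum_{l\in S}\|x_l\|\}$, with $S$ running over subsets of the remaining $n-3$ indices. I expect to show, using $n\ge5$ and (after discarding zero degrees via Lemma~\ref{monosem0}) that these subset sums take at least three distinct values, so that $d$ can dodge $\{c,b-c\}$ and the merge goes through. The only situation where this fails is when the degrees are so degenerate that the available prefix degrees are confined to $\{c,b-c\}$; in that exceptional case a direct analysis through the classification of bad sequences in Proposition~\ref{good} should show that \emph{every} opposite-order arrangement is a graded identity, hence lies in $I$ by Proposition~\ref{monident} and cannot obstruct. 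In either case every non-identity $N_\tau$ is equivalent to $N_\sigma$, and $P_n^{\bf g}\cap T_\mathbb{Z}(U_1)\subset I$ follows from Lemma~\ref{incl}. The main obstacle I anticipate is precisely this degree-counting step, together with the verification that the degenerate tuples contribute no non-identity of the opposite order.
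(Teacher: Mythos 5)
Your first two moves are sound and coincide with the paper's opening: admissible adjacent transpositions (those not exchanging $x_i$ with $x_j$) stay inside $\Sigma$, since a non-identity glue would exhibit a second reducible pair, and they connect all arrangements having $x_i$ before $x_j$; moreover your merge congruence
\[
(c-d)(b-c-d)\,N_{\sigma'}\equiv (b-d)(c-b-d)\,N_{\sigma'\circ(k,k+1)}\pmod{I}
\]
is a correct instance of \eqref{Cbasis3}, and the swapped monomial is a non-identity exactly when $(c-d)(b-c-d)\neq 0$. The problem is that the step carrying all the weight of the lemma --- producing a prefix degree $d\notin\{c,b-c\}$, or disposing of the case where none exists --- is only announced (``I expect to show\dots'', ``should show\dots''), and the two justifications you sketch for it fail. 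First, Lemma~\ref{monosem0} cannot ``discard zero degrees'' here: it concerns good versus bad tuples, whereas in this lemma degree-zero variables still sit inside every $N_\tau$ and contribute nothing to prefix degrees, so the prefix degrees can genuinely be confined to two values (for instance, if all variables other than $x_n,x_i,x_j$ have degree $0$ and $a_n=c$, every prefix degree equals $c$). Second, your fallback claim --- that in the degenerate case every opposite-order arrangement is an identity --- is not a consequence of Proposition~\ref{good} or of the degrees alone: take ${\bf g}=(3,1,-1,0,2)$ with $(i,j)=(1,2)$, $b=3$, $c=1$; all prefix degrees lie in $\{c,b-c\}=\{1,2\}$, yet the opposite-order monomial $[x_5,x_2,x_3,x_1,x_4]$ evaluates to $(-1)(-4)(1)(-5)e_5\neq 0$. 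This tuple is excluded only because it violates the hypothesis that $\{i,j\}$ is the \emph{unique} reducible pair (here $\{3,4\}$ is reducible as well), so any correct treatment of your residual case must invoke that uniqueness hypothesis --- which your sketch never does at this point.

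The gap is repairable, but along a different line from the one you propose: the uniqueness hypothesis first forces $a_l\neq 0$ for every $l\neq n$ (if $a_z=0$ and $a_k\neq 0$ with $k,z\neq n$, then in any same-order arrangement with $x_k,x_z$ adjacent the glue equals $-a_k$ times the evaluation of the arrangement with $x_z$ deleted, whose factors are among those of a non-identity; hence $\{k,z\}$ would be a second reducible pair). Once all these degrees are nonzero and $n\geq 5$, the four sums $a_n$, $a_n+a_l$, $a_n+a_{l'}$, $a_n+a_l+a_{l'}$ cannot all lie in a two-element set in characteristic zero, so a good prefix exists and your merge always goes through; the ``degenerate case'' is vacuous under the hypothesis. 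Note also that the paper's own endgame is different: it takes one adjacent placement, and when the swap is an identity it reduces the residual configuration to the four-variable monomials $[x_n,x_i,[x_j,x_k]]$, $[x_n,x_j,x_i,x_k]$, $[x_n,[x_i,x_k],x_j]$, $[x_n,x_k,x_j,x_i]$, $k\notin\{n,i,j\}$, and closes with Proposition~\ref{ll1}. As submitted, your argument has a genuine gap precisely at its central step.
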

\begin{proof}
Let $\Sigma=\{\tau\in S_{n-1}\mid N_{\tau}\sim N_{\sigma}\}$ and $\mathcal{N}=\{N_{\sigma^{\prime}}\mid \sigma^{\prime}\in\Sigma\}$. Since $N_{\sigma}$ is reducible for exactly one pair  $x_i$, $x_j$ of variables with $i$, $j\neq n$, an argument analogous to that in the proof of the previous lemma yields that if we permute  any two adjacent variables in a monomial in $\mathcal{N}$, except for the pair $x_i$, $x_j$, the resulting monomial also lies in $\mathcal{N}$. We assume without loss of generality that $x_i$ appears to the left of $x_j$ in $N_{\sigma}$. Clearly there exists a monomial $N_{\sigma^{\prime}}$ in $\mathcal{N}$ such that  $x_i$ is adjacent and to the left of $x_j$. If the monomial obtained from $N_{\sigma^{\prime}}$ by permuting $x_i$ and $x_j$, that is the monomial $N_{(\sigma^{\prime}\circ (k,k+1))}$, for an adequate $k$, is not an identity then $N_{\sigma^{\prime}}\sim N_{(\sigma^{\prime}\circ (k,k+1))}$. We conclude that $\Sigma=S_{n-1}$ and the statement follows from Lemma \ref{incl}. Otherwise whenever $x_i$ appears adjacent and to the right of $x_j$ the resulting monomial is an identity, and whenever it appears to the left the resulting monomial is equivalent to $N_{\sigma}$. For every $k\notin\{n,i,j\}$, we conclude that the monomials are $[x_n,x_i,[x_j,x_k]]$, $[x_n,x_j,x_i,x_k]$, $[x_n,[x_i,x_k],x_j]$, $[x_n,x_k,x_j,x_i]$. The result now follows from Proposition \ref{ll1}.
\end{proof}

	If the indices $i$, $j$, in Definition~\ref{redu}, are different from $n$ then $s_{n-1}=n$. Moreover, the degrees of the remaining $n-2$ variables of the monomial $N$ are given by the sequence in the following definition.

\begin{definition}
	Let $n$ be a positive integer, ${\bf g}=(a_1,\ldots,a_n)\in \mathbb{Z}^n$, and $1\leq i,j \leq n-1$. We denote by $\Delta_{i,j}({\bf g})=(h_1,\dots, h_{n-2})$ the sequence where $(h_1,\dots, h_{n-3})$ is the sequence obtained from $(a_1,\ldots,a_{n-1})$ deleting $a_i$ and $a_j$, $h_{n-2}=a_{i}+a_{j}$.
\end{definition}

\begin{lemma}\label{l3}
	Let $n\geq 5$ and let ${\bf g}=(a_1,\ldots,a_{n})\in \mathbb{Z}^n$ be a standard $n$-tuple. If there exists a permutation $\sigma \in S_{n-1}$ such that $N_{\sigma}$ is reducible for the distinct pairs $x_i$, $x_j$, and $x_k$, $x_l$ with $i$, $j$, $k$, $l\neq n$ then at least one of the tuples $\Delta_{i,j}({\bf g})$ or $\Delta_{k,l}({\bf g})$ is good.
\end{lemma}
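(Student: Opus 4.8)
My plan is to prove the contrapositive: assuming $\mathbf{g}$ is standard and that a single $N_\sigma$ is reducible (in the sense of Definition~\ref{redu}) in two distinct pairs $x_i,x_j$ and $x_k,x_l$ with $i,j,k,l\neq n$, I will show that if \emph{both} $\Delta_{i,j}(\mathbf{g})$ and $\Delta_{k,l}(\mathbf{g})$ were bad a contradiction arises. Two structural facts set the stage. First, exactly as in the computation of Lemma~\ref{l1}, reducibility of $N_\sigma$ in $x_i,x_j$ produces a nonzero left-normed commutator of length $n-1$ with head $x_n$ in which $[x_i,x_j]$ occupies a non-head slot; hence $a_i\neq a_j$ and the $(n-1)$-tuple $(a_n)\cup\Delta_{i,j}(\mathbf{g})$ is good, and likewise $(a_n)\cup\Delta_{k,l}(\mathbf{g})$ is good. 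Secondly, passing from $(a_1,\dots,a_{n-1})$ to $\Delta_{i,j}(\mathbf{g})$ only replaces $a_i,a_j$ by their sum, so the total sum is preserved: writing $\Sigma=a_1+\cdots+a_{n-1}$, every tuple $\Delta_{i,j}(\mathbf{g})$ and $\Delta_{k,l}(\mathbf{g})$ has sum exactly $\Sigma$.

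The next step translates the hypothetical badness through Proposition~\ref{good}. After discarding zero entries via Lemma~\ref{monosem0}, that proposition amounts to a clean criterion: a tuple of nonzero integers of length $\geq 2$ is bad exactly when there is a generator $g$ such that every entry is a multiple of $g$, every entry that is a positive multiple of $g$ equals $g$ itself, and the sum of the entries equals $cg$ with $c\geq 2$ (the last condition reformulating the inequality $|\mathcal{C}(\mathbf{g})|_g\geq\lambda_1+\cdots+\lambda_l+2$). Applying this to the two bad tuples yields generators $g_1,g_2$ with $c_1g_1=\Sigma=c_2g_2$ and $c_1,c_2\geq 2$; in particular $\Sigma\neq 0$, and after replacing $g$ by $-g$ if necessary I may assume $\Sigma>0$ and $g_1,g_2>0$. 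The ``positive entries equal the generator'' clause then rigidly constrains the $a_m$: for each index $m\leq n-1$ lying outside the pair defining a given $\Delta$, a positive $a_m$ must equal the corresponding generator, and each block-sum $a_i+a_j$, $a_k+a_l$, if positive, must equal $g_1$, $g_2$ respectively.

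The contradiction is extracted by combining this rigidity with standardness. Since a prefix of a nonvanishing left-normed commutator is again nonvanishing, standardness of $\mathbf{g}$ forces the tuple $A=(a_1,\dots,a_{n-1})$ to be good. But $A$ has the same sum $\Sigma=c_1g_1\geq 2g_1$ as the bad tuple $\Delta_{i,j}(\mathbf{g})$, so the bad criterion can fail for $A$ only because $A$ contains an entry that is either not a multiple of $g_1$ or a positive multiple $\geq 2g_1$; as all entries $a_m$ with $m\neq i,j$ already appear harmlessly in $\Delta_{i,j}(\mathbf{g})$, this offending entry must sit at an index of $\{i,j\}$, and symmetrically $A$ has an offending entry at an index of $\{k,l\}$. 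When the two pairs are \emph{disjoint} this is impossible: the offending index of $\{i,j\}$ then lies outside $\{k,l\}$, so its entry appears un-merged in $\Delta_{k,l}(\mathbf{g})$ and is a multiple of $g_2$ that is $\leq g_2$ whenever positive; for $g_1=g_2$ this is neither a non-multiple of $g_1$ nor a positive multiple $\geq 2g_1$, contradicting its being offending, while the case $g_1\neq g_2$ is closed by a short divisibility argument applied to the entries the two tuples share.

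The genuinely delicate situation, and the main obstacle, is the \emph{overlapping} case, where the pairs share one index, say $j$. Here the single entry $a_j$ is merged away in both $\Delta_{i,j}(\mathbf{g})$ and $\Delta_{k,l}(\mathbf{g})$, so it can simultaneously be the entry making $A$ good while staying invisible to both bad tuples; indeed at the level of multisets both tuples can genuinely be bad (e.g.\ with $a_i+a_j=a_j+a_l=g$, forcing $a_i,a_l<0$ and $a_j$ a large positive multiple of $g$). Ruling this out requires the full force of reducibility \emph{modulo} $I$, not merely the goodness of $(a_n)\cup\Delta(\mathbf{g})$: the idea is to use that one and the same $N_\sigma$ must, inside its $\sim_I$-class and through the scalar relations \eqref{basis3} together with the vanishing-fusion moves of Lemma~\ref{l1}, be carried into two monomials realizing the fusions of $\{x_i,x_j\}$ and of $\{x_j,x_l\}$. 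Demanding that both fused commutators be nonzero, and tracking the nonzero scalars contributed by \eqref{basis3} along the two routes, over-determines the degrees surrounding the shared variable $x_j$; combined with the constraint $a_j\neq\Sigma$ imposed by standardness at the last step, this forces a vanishing incompatible with $N_\sigma\notin T_\mathbb{Z}(U_1)$. Carrying out this bookkeeping uniformly over the finitely many overlapping sign patterns is where the real work of the proof lies.
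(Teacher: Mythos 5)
Your disjoint case is essentially sound, and it is a cleaner packaging of what the paper itself does: the paper also argues by contradiction, using Proposition~\ref{good} to enumerate the possible degree patterns of $(a_i,a_j,a_k,a_l)$ (its cases 1)--5)) and concluding in each that $(a_1,\ldots,a_{n-1})$ would be bad, contradicting standardness. Your ``offending entry plus conserved sum'' formulation captures the same mechanism, and the subcase $g_1\neq g_2$ that you defer to ``a short divisibility argument'' does in fact close: with $\Sigma>0$ and all entries outside both pairs forced negative, one block sum, say $a_i+a_j$, is positive, hence equals $g_1$ and is a multiple of $g_2$; this forces $a_i=a_j=g_2$ and $g_1=2g_2$, whence $a_k+a_l$, being a multiple of $2g_2$, cannot equal $g_2$ and so is nonpositive, giving $\Sigma\le g_1<2g_1\le\Sigma$, a contradiction.

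The genuine gap is the overlapping case, and it is not a small one, because it is the only genuinely hard case. Your diagnosis there is correct: when the pairs share an index $j$, the entry $a_j$ is merged away in both tuples, so the degree bookkeeping alone cannot conclude. Concretely, take $n=7$, ${\bf g}=(1,2,-1,1,-1,1,2)$, with pairs $\{2,3\}$ and $\{2,5\}$: here ${\bf g}$ is standard, both $\Delta_{2,3}({\bf g})$ and $\Delta_{2,5}({\bf g})$ equal $(1,1,-1,1,1)$ up to order and are bad by Proposition~\ref{good}, yet $(a_1,\ldots,a_6)$ is good, and even the tuples $(a_7)\cup\Delta_{2,3}({\bf g})$ and $(a_7)\cup\Delta_{2,5}({\bf g})$ are good --- i.e., every consequence of reducibility that you (or, for that matter, the paper) actually extract is satisfied. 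So any correct proof must use the equivalence-mod-$I$ content of reducibility in an essential way, exactly as you say. But at this point your proposal stops being a proof: ``tracking the nonzero scalars along the two routes over-determines the degrees\dots forces a vanishing'' is a program, not an argument --- no scalar identities are written down, no contradiction is derived, and you yourself label this ``where the real work of the proof lies.'' For comparison, the paper disposes of the overlapping case by the same kind of pattern enumeration as the disjoint one (its cases 1$'$)--5$'$)); your example pattern $(a_i,a_j,a_l)=(-g,2g,-g)$ with $a_i+a_j=a_j+a_l=g$ lies outside that list, so you have put your finger on a real subtlety that the paper's own proof glosses over, but identifying the obstacle is not the same as overcoming it: as submitted, the lemma remains unproved in its crucial case.
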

\begin{proof}
Assume that both $\Delta_{i,j}({\bf g})$ and $\Delta_{k,l}({\bf g})$ are bad. Using the same argument as in the proof of Lemma \ref{monosem0}, we can suppose that $a_i$, $a_j$, $a_k$ and $a_l$ are nonzero in $\mathbb{Z}$, since $(a_1,\dots, a_{n-1})$ is good. If $\{i,j\}\cap \{k,l\}=\emptyset$ then, since $\Delta_{i,j}({\bf g})$ is bad, Proposition \ref{good} and $n\geq 5$, imply that there exists $g\in \mathbb{Z}$ such that one of the following holds:
\begin{enumerate}
	\item[1)] $a_i=-a_j=g$ (or $a_k=-a_l=g$);
	\item[2)] $a_i=\lambda_r(-g)$, $a_j=\lambda_s(-g)$ and $a_k=\lambda_t(-g)$, $a_l=\lambda_v(-g)$; 
	\item[3)] $a_i=\lambda_r(-g)$, $a_j=g$ and $a_k=\lambda_s(-g)$, $a_l=g$;
	\item[4)] $a_i=\lambda_r(-g)$, $a_j=g$ and $a_k=\lambda_t(-g)$, $a_l=\lambda_v(-g)$; or
 \item[5)] $a_i=\lambda_r(-g)$, $a_j=\lambda_s(-g)$ and $a_k=\lambda_t(-g)$, $a_l=g$, 
\end{enumerate} 
for some $r$, $s$, $t$ and $v$, because $N_\sigma$ is reducible in $x_i$, $x_j$, and in $x_k$, $x_l$ with $i$, $j$, $k$, $l\neq n$.

But in cases 1, and 2, the sequence $(a_1,\dots, a_{n-1})$ is bad. Case 3 leads to the same conclusion because we increase the degree of $-g$ and this neutralizes with increasing the degree of variables of degree $g$. In the remaining two cases the argument is similar. Hence in each case we conclude that $(a_1,\ldots,a_{n-1})$ is a bad sequence, which is a 
contradiction because $M=[x_1,\ldots, x_n]\notin T_G(L)$.

Assume otherwise $\{i,j\}\cap \{k,l\}\neq\emptyset$, and without loss of generality that $j=k$. As $n>4$, we analyse the following five cases:
\begin{enumerate}
	\item[1$^\prime$)] $a_i=-a_j=g$ (or $a_l=-a_j=g$);
	\item[2$^\prime$)] $a_i=-a_j=-g$ (or $a_l=-a_j=-g$);
	\item[3$^\prime$)] $a_i=\lambda_r(-g)$, $a_j=\lambda_s(-g)$ and $a_k=\lambda_t(-g)$; 
	\item[4$^\prime$)] $a_i=\lambda_r(-g)$, $a_j=g$ and $a_k=\lambda_s(-g)$; 
	\item[5$^\prime$)] $a_i=a_k=g$ and $a_j=\lambda_s(-g)$.
\end{enumerate} 
An argument similar to that of $\{i,j\}\cap \{k,l\}=\emptyset$ convinces us that in each of these cases the sequence $(a_1,\ldots, a_{n-1})$ would
be bad, a contradiction. 
\end{proof}

Now we have all the ingredients for the proof of the main result in the section.

\begin{proof}[Proof of Theorem \ref{mainresult1}]
We prove, by induction on $n$, that $P_n^{\bf g}\cap T_\mathbb{Z}(U_1)\subseteq I$ for every $n$ and every ${\bf g}\in \mathbb{Z}^n$. 

If ${\bf g}$ is a bad sequence the inclusion follows from Proposition \ref{monident}.  Now assume that ${\bf g}$ is good. If ${\bf h}$ is obtained by reordering the elements of ${\bf g}$, then there exists an automorphism of $L\langle X_{\mathbb{Z}} \rangle$ that maps $P_n^{\bf g}$ to $P_n^{\bf h}$, hence we may assume without loss of generality that ${\bf g}$ is standard. It follows from Lemmas \ref{l1} and \ref{l2} that we can restrict to the case where for every permutation $\sigma \in S_{n-1}$ such that $N_{\sigma}\notin T_\mathbb{Z}(U_1)$, the monomial $N_{\sigma}$ is reducible for distinct pairs $(i,j)$ and $(k,l)$ of elements different from $n$. Lemma \ref{l3} implies that we may assume $\Delta_{i,j}({\bf g})=(h_1,\ldots,h_{n-2})$ is good.

We claim that there exists a multilinear monomial $N_1$ such that $N_{\sigma}\sim [N_1, x_n]$. The induction hypothesis implies that $[N_1, x_n]\sim M$. Therefore $N_{\sigma}\sim M$ where $M=[x_1,\dots, x_n]$. This implies that the inclusion $P_n^{\bf g}\cap T_{\mathbb{Z}}(U_1)\subseteq I$ follows from Lemma \ref{incl}.

Note that ${\bf h}=\Delta_{i,j}({\bf g})$ encodes, up to a permutation, the degrees of the variables in $N$ in Definition \ref{redu} apart from $x_n$. Let $S$ be the set of variables in $N$ and let $S^{\prime}=S\setminus\{x_n\}$. Since $\Delta_{i,j}({\bf g})$ is good there exists a monomial $N_0\in P_{S^{\prime}}\setminus T_{\mathbb{Z}}(U_1)$. Since ${\bf g}=(a_1,\ldots,a_n)$ is standard, $[x_1,\ldots, x_n]\notin T_{\mathbb{Z}}(U_1)$, hence $[L_g, L_{a_n}]\neq 0$ where $g=a_1+\cdots+a_{n-1}$ is the $\mathbb{Z}$-degree of $N_0$. This implies $[N_0,x_n]\notin T_{\mathbb{Z}}(U_1)$. The monomial $N$ has degree $n-1$ and is not an identity for $L$ because $N_{\sigma}\notin T_{\mathbb{Z}}(U_1)$, hence the induction hypothesis implies $[N_0,x_n]\sim N$. The monomial $N_{\sigma}$ is equivalent to the one obtained from $N$ by substituting a variable by $[x_i,x_j]$. Let $N_1$ denote the monomial obtained from $N_0$ by the same substitution. We then have $[N_1,x_n]\sim N_{\sigma}$. 
\end{proof}

The results in this section also allow us to give an alternative proof for \cite[Theorem 1]{FKK}. We observe that in condition (iii) of the Proposition \ref{good} for the Lie algebra $W_1$ with its natural $\mathbb{Z}$-grading one will have $g = 1$, and in this case each $\lambda_i=1$.

\section{$\mathbb{Z}$-Graded identities of $U_1$ over an infinite field of characteristic $p>2$} 

In this section $K$ denotes an infinite field of characteristic $p>2$. We deduce the following analogue of Theorem \ref{mainresult1}. 

\begin{theorem}\label{mainresult2}
	Let $K$ be an infinite field of characteristic $p>2$. The graded identities
\begin{equation}\label{C1identp}
[x_1^a,x_2^b] \equiv 0
\end{equation}	
	where $(b-a)$ is a multiple of $p$, and
	\begin{equation}\label{C2identp}
	\alpha[x_1^a,x_2^b, x_3^c]-\beta 	[x_1^a,x_3^c, x_2^b]\equiv 0
	\end{equation}	
	where $\alpha=(c-a)(b-c-a)$, $\beta=(b-a)(c-b-a)$, form a basis for the $\mathbb{Z}$-graded identities of the Lie algebra $U_1$ over $K$.
\end{theorem}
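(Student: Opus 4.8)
The plan is to follow the architecture of Section~\ref{identities} as closely as possible, recording at each step the single structural change that characteristic $p$ introduces: the degree-two commutator $[x_1^a,x_2^b]$ now vanishes on $U_1$ under the weaker hypothesis $p\mid(b-a)$ rather than $a=b$, since $[e_a,e_b]=(b-a)e_{a+b}$ and $(b-a)\equiv 0\pmod p$. First I would record that both \eqref{C1identp} and \eqref{C2identp} lie in $T_{\mathbb{Z}}(U_1)$: the second is exactly the computation of Lemma~\ref{3.6}, which is valid in every characteristic, and the first is the vanishing just noted. Writing $I_p$ for the $T_{\mathbb{Z}}$-ideal they generate, one has $I_p\subseteq T_{\mathbb{Z}}(U_1)$, and the task is the reverse inclusion. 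As $K$ is infinite, $T_{\mathbb{Z}}(U_1)$ is generated by its multihomogeneous elements, so it suffices to place each multihomogeneous identity inside $I_p$.

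The main simplifying observation is that testing an identity on $U_1$ amounts to a single substitution. If $f$ is multihomogeneous in $x^{g_1},\dots,x^{g_k}$ of multidegree $(d_1,\dots,d_k)$, then $f(\lambda_1e_{g_1},\dots,\lambda_ke_{g_k})=\lambda_1^{d_1}\cdots\lambda_k^{d_k}\,f(e_{g_1},\dots,e_{g_k})$, so $f\in T_{\mathbb{Z}}(U_1)$ if and only if $f(e_{g_1},\dots,e_{g_k})=0$. Moreover every left-normed commutator evaluates to a product of structure constants times a basis vector, and each structure constant is an integer linear form in the degrees; hence $f(e_{g_1},\dots,e_{g_k})=P_f\,e_{d_1g_1+\cdots+d_kg_k}$, where $P_f$ is the value at $g_1,\dots,g_k$ of an integer-coefficient polynomial, so its class modulo $p$ is unchanged when each $g_i$ is replaced by its residue. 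Thus \emph{whether a Lie polynomial is an identity of $U_1$ in characteristic $p$ depends only on the residues of the degrees modulo $p$}. This periodicity replaces the characteristic-zero dichotomy ``$a=b$ or $a\neq b$'' and is the mechanism by which \eqref{C1identp} absorbs all the new vanishing.

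With these reductions in place I would re-run the chain of results of Section~\ref{identities}, substituting the congruence $p\mid(h-g)$ for the equality $g=h$ throughout. Concretely: the analogue of Proposition~\ref{monident} holds because $[x^g,x^h]\in T_{\mathbb{Z}}(U_1)$ iff $p\mid(h-g)$, which is \eqref{C1identp}; the analogues of Lemma~\ref{monosem0} and Proposition~\ref{good} produce the characterisation of bad sequences, now read modulo $p$ and incorporating the extra vanishings coming from structure constants that are $\equiv 0\pmod p$; and the base cases of degree $\le 4$ (the analogues of Lemmas~\ref{l310} and~\ref{degree4}) are verified using the characteristic-$p$ values of $\alpha,\beta$ in \eqref{C2identp}. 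The reducibility machinery (Lemmas~\ref{l1}, \ref{l2}, \ref{l3}) and the final induction on the degree $n$ are formal consequences of these base facts together with \eqref{C2identp}, so they transfer once the base facts are secured; the conclusion, exactly as in the proof of Theorem~\ref{mainresult1}, is that for every good degree sequence all non-identity left-normed commutators are equivalent modulo $I_p$, whence $P_n^{\bf g}\cap T_{\mathbb{Z}}(U_1)\subseteq I_p$ and $I_p=T_{\mathbb{Z}}(U_1)$.

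The hard part is two-fold. First, the combinatorics of the bad-sequence characterisation genuinely degenerates modulo $p$: the $p$ residue classes merge infinitely many integer degrees, so resonances such as $a+b\equiv c$ and vanishings such as $(c-a-b)\equiv 0$ that are impossible in characteristic zero now occur, and one must check case by case that each resulting identity is still forced by \eqref{C1identp} and \eqref{C2identp} rather than being genuinely new. Second, since we work with multihomogeneous rather than merely multilinear polynomials, variables occurring with multiplicity $\ge p$ cannot be handled by naive multilinearisation, because the factorial coefficient produced when the polarised variables are re-identified may vanish in $K$; for these one must argue directly that, modulo $I_p$, the space of multihomogeneous polynomials of the given multidegree collapses onto a single normal-form monomial, and then invoke the evaluation formula above to see that the only identity it contains is $0$. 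I expect this multihomogeneous collapse, together with the degenerate mod-$p$ combinatorics, to be the technical core; everything else is the bookkeeping of propagating $p\mid(b-a)$ through the argument of Theorem~\ref{mainresult1}.
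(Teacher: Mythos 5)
Your high-level plan is the same as the paper's (re-run the Section~\ref{identities} machinery with equalities of degrees replaced by congruences mod $p$), and your ``evaluation formula'' observation is exactly the right tool. But there is a genuine gap at the step you yourself flag as the technical core, and your proposed way around it is not an argument. The machinery of Section~\ref{identities} (the basis $\{N_\sigma \mid \sigma\in S_{n-1}\}$, Lemmas~\ref{incl}, \ref{l1}, \ref{l2}, \ref{l3}) lives entirely inside the multilinear spaces $P_n^{\bf g}$, so before you can ``re-run'' anything you must prove that in characteristic $p$ the ideal $T_{\mathbb{Z}}(U_1)$ is still generated by its multilinear elements. You correctly note that polarization fails (re-identifying split variables produces factorial coefficients that can vanish), but your substitute --- showing that modulo $I_p$ each multihomogeneous component collapses onto a single normal-form monomial --- is left entirely unexecuted, and as stated it is not a lemma one can prove in advance: for good sequences it is essentially the whole theorem rephrased at the multihomogeneous level, so the plan is circular. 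This is precisely where the paper inserts its one new structural ingredient, Theorem~\ref{identimult}: for any $G$-graded Lie algebra with $\dim L_g\le 1$ over an infinite field, every graded identity follows from multilinear ones. Its proof is a short exploitation of the evaluation formula you already have: given a multihomogeneous identity $f$, take an \emph{arbitrary} multilinear lift $h$ of $f$ (any multilinear polynomial returning $f$ when the split variables $x_{i,j}^{g_i}$ are identified back to $x_i^{g_i}$ --- no polarization, hence no factorials). Since every admissible substitution sends $x_{i,j}^{g_i}$ to $\xi_{i,j}d_{g_i}$, the value of $h$ is a product of scalars times $f(d_{g_1},\dots,d_{g_r})=0$; so $h$ is an identity, and $f$ is a consequence of $h$ via the graded endomorphism identifying the variables. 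With this lemma your reduction to the multilinear setting is legitimate; without it, your induction never gets off the ground.

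The second half of your ``hard part,'' the degenerate mod-$p$ combinatorics, is correctly diagnosed but also not carried out: the paper proves the analogues you predict (Lemma~\ref{eqmonosem0}, Proposition~\ref{eqgood} for bad sequences read in $\mathbb{Z}_p$, Lemma~\ref{eqdegree4} for degree $4$, Lemma~\ref{eql3}), and these case analyses do use $p>2$ in places (for instance, in Proposition~\ref{eqgood}, concluding $\overline{\|y\|}=-\bar{g}$ requires $p\neq 2$), so they are not purely formal transfers. Your observation that membership in $T_{\mathbb{Z}}(U_1)$ depends only on the residues of the degrees mod $p$ is correct and is implicitly what makes these analogues go through; but a referee would insist on seeing the verifications, since the new resonances (e.g.\ structure constants vanishing mod $p$) are exactly what could, in principle, produce identities not in $I_p$. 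In short: right skeleton, but the two load-bearing pieces --- the multilinear reduction (Theorem~\ref{identimult}) and the mod-$p$ case analysis --- are missing, and the first of these cannot be patched by the method you sketch.
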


\begin{remark}
	If $K$ is a field of characteristic zero 
	the identities \eqref{C1identp} are substituted by $[x_1^a, x_2^a]=0$, for every $a\in\mathbb{Z}$.
\end{remark}

Let $L=\oplus_{g\in G}L_g$ be a Lie algebra  with a $G$-grading such that $\dim L_g\leq 1$. The next theorem will provide us with conditions to determine a basis for the graded identities of $L$ in case its multilinear graded identities are known.

We consider a monomial $M = [x_{i_1},\ldots, x_{i_n}]$ of length $n$ in $L\langle X_G\rangle$, and denote by $h(M)$ the sequence $(h_1,\ldots, h_n)$ where $h_k$ is the degree of the variable $x_{i_k}$.

\begin{lemma}\label{Diogo3.7.}
	Let $M_1$, $M_2$ be monomials such that $h(M_1) = h(M_2)$, then $M_1 \in T_{\mathbb{Z}}(U_1)$ if and only if $M_2 \in T_{\mathbb{Z}}(U_1)$.
\end{lemma}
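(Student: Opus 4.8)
The plan is to exploit the two structural features of $U_1$ that are special: every homogeneous component is at most one-dimensional, $L_i = K e_i$, and the grading has full support, $\mathrm{supp}\ U_1 = \mathbb{Z}$. Write $h(M_1) = h(M_2) = (h_1,\ldots,h_n)$. First I would observe that in any admissible substitution a variable of degree $h_k$ is necessarily replaced by some $\alpha_k e_{h_k}$ with $\alpha_k \in K$, because $L_{h_k} = K e_{h_k}$. By the multilinearity of the left-normed commutator in each of its arguments, evaluating $M = [x_{i_1},\ldots,x_{i_n}]$ at such a substitution yields
\[
\left(\prod_{k=1}^n \alpha_k\right)[e_{h_1},\ldots,e_{h_n}].
\]

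Next I would compute the bracket $[e_{h_1},\ldots,e_{h_n}]$ directly from the multiplication rule \eqref{multiwitt}. An easy induction on $n$ gives
\[
[e_{h_1},\ldots,e_{h_n}] = c(h_1,\ldots,h_n)\, e_{h_1+\cdots+h_n}, \qquad c(h_1,\ldots,h_n) = \prod_{k=2}^n \Big( h_k - \sum_{j=1}^{k-1} h_j \Big).
\]
The decisive point is that the scalar $c(h_1,\ldots,h_n)$ depends only on the degree sequence $(h_1,\ldots,h_n) = h(M)$ and not on the labels $i_k$ of the variables. Since $U_1$ has full support, $e_{h_1+\cdots+h_n} \neq 0$, so the value of $M$ under the substitution vanishes precisely when $(\prod_k \alpha_k)\, c(h_1,\ldots,h_n) = 0$. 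Taking all $\alpha_k = 1$ produces an admissible substitution whose value equals $c(h_1,\ldots,h_n)\, e_{h_1+\cdots+h_n}$; hence $M \in T_{\mathbb{Z}}(U_1)$ forces $c(h_1,\ldots,h_n) = 0$, while conversely $c(h_1,\ldots,h_n) = 0$ makes the value vanish for every substitution. Thus $M \in T_{\mathbb{Z}}(U_1)$ if and only if $c(h_1,\ldots,h_n) = 0$, a condition on $h(M)$ alone, valid in any characteristic.

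The only point requiring care is that $M_1$ may contain repeated variables, which forces some of the scalars $\alpha_k$ to coincide; but this is harmless, since the witnessing substitution above takes all $\alpha_k = 1$ and stays admissible under such coincidences, and in the other direction $c = 0$ annihilates the value regardless of any forced equalities. Consequently the characterization $M \in T_{\mathbb{Z}}(U_1) \iff c(h(M)) = 0$ applies verbatim to both $M_1$ and $M_2$, and $h(M_1) = h(M_2)$ gives $c(h(M_1)) = c(h(M_2))$, which yields the claim. I do not anticipate any genuine obstacle: the whole content is that the one-dimensionality of the components collapses the value of a monomial into a single scalar determined entirely by its degree sequence.
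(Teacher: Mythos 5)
Your proof is correct and takes essentially the same route as the paper's: the one-dimensionality of the homogeneous components lets you pull every scalar out by multilinearity, reducing membership in $T_{\mathbb{Z}}(U_1)$ to the single canonical substitution, whose value depends only on the degree sequence $h(M)$. The explicit structure-constant formula $c(h_1,\ldots,h_n)=\prod_{k=2}^n\bigl(h_k-\sum_{j=1}^{k-1}h_j\bigr)$ and the appeal to full support are a harmless (and informative) elaboration that the paper does not need, since it stops at observing that the canonical value coincides for $M_1$ and $M_2$.
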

\begin{proof}
	The statement follows from $\dim L_g\le 1$. If $g\in \mathrm{supp}\ U_1$ then $L_g$ is span of an element $d_g$. Therefore for every admissible substitution $(a_1,\ldots,a_r)$ of elements in $U_1$, there exist $\lambda_i\in K$ such that $a_i=\lambda_i d_{g_i}$. Here we assume, without loss of generality, that each $g_i\in \mathrm{supp}\ L$. We conclude that
\[
M_1(a_1,\ldots,a_r)=[a_{i_1},\ldots, a_{i_n}]=	\lambda_{i_1}\cdots \lambda_{i_n}[d_{g_{i_1}},\ldots, d_{g_{i_n}}]=\lambda_{i_1}\cdots \lambda_{i_n}M_1(d_{g_1},\ldots,d_{g_r}).
\]
	As $K$ is an infinite field we have $M_1\in T_{\mathbb{Z}}(U_1)$ if and only if $M_1(d_{g_1},\ldots,d_{g_r})=0$.
	Similarly for $M_2$, and the result follows.
\end{proof}

The above result is more general in the following sense.
\begin{theorem}\label{identimult}
	Let $K$ be an infinite field of characteristic different from two and let $L=\oplus_{g\in G}L_g$ be a Lie algebra  with a $G$-grading such that $\dim L_g\leq 1$. Then every graded identity of $L$ follows from its multilinear graded identities.
\end{theorem}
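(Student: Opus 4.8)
Looking at this theorem, I need to prove that for a $G$-graded Lie algebra $L = \oplus_{g\in G} L_g$ with $\dim L_g \leq 1$ over an infinite field of characteristic $\neq 2$, every graded identity follows from its multilinear graded identities.

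=== PROOF PROPOSAL ===

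The plan is to reduce an arbitrary graded identity to its multihomogeneous components and then show each such component is a consequence of multilinear identities, exploiting crucially the hypothesis $\dim L_g \leq 1$. First I would recall the standard fact that over an infinite field a $T_G$-ideal is generated by its multihomogeneous elements: applying the Vandermonde argument (homogeneous degree-by-degree scalar substitutions $x_i^{g} \mapsto \xi x_i^{g}$ together with invertibility of the resulting Vandermonde matrix) to any graded identity $f$, every multihomogeneous component of $f$ is again a graded identity. Hence it suffices to prove that every \emph{multihomogeneous} graded identity of $L$ follows from the multilinear ones.

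The heart of the matter is the passage from a multihomogeneous identity to a multilinear one, and here the condition $\dim L_g \leq 1$ does the essential work. I would argue as follows. Let $f(x_1^{g_1},\ldots,x_k^{g_k})$ be multihomogeneous, say with $\deg_{x_i^{g_i}} f = d_i$. Consider the standard multilinearization process: replace the variable $x_i^{g_i}$ that occurs with multiplicity $d_i > 1$ by a sum $x_{i,1}^{g_i} + \cdots + x_{i,d_i}^{g_i}$ of new variables of the \emph{same} $G$-degree $g_i$, and extract the fully multilinear component $\tilde f$. The key observation, powered by Lemma~\ref{Diogo3.7.} and $\dim L_g\leq 1$, is that this process is reversible here: since each homogeneous component is at most one-dimensional, any admissible substitution assigns to all copies $x_{i,1}^{g_i},\ldots,x_{i,d_i}^{g_i}$ scalar multiples of a single basis element $d_{g_i}$. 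Therefore evaluating the multilinearization $\tilde f$ and then specializing all copies of the same degree back to a single variable recovers (a nonzero scalar multiple of, since $\mathrm{char}\,K \neq 2$ guarantees the relevant multinomial coefficients $d_i!$ are invertible in the needed range, or more carefully, the coefficient extracted is $d_i!$ times a nonzero value) the original evaluation. Concretely, $\tilde f$ is a multilinear graded identity of $L$ if and only if $f$ is, and $f$ can be recovered from $\tilde f$ by the diagonal substitution $x_{i,1}^{g_i}=\cdots=x_{i,d_i}^{g_i}=x_i^{g_i}$, so $f$ lies in the $T_G$-ideal generated by $\tilde f$.

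The main obstacle I anticipate is making the recovery step fully rigorous in positive characteristic: the multilinearization and its inverse normally rely on dividing by factorials $d_i!$, which can vanish modulo $p$. I would sidestep this by never dividing. Instead I would show directly that $f$ itself, as a diagonal specialization of $\tilde f$, lies in $\langle \tilde f\rangle_G$, so no factorial ever appears in the denominator; the reversibility is used only to guarantee that $\tilde f \in T_G(L)$ whenever $f\in T_G(L)$, and that inference rests solely on Lemma~\ref{Diogo3.7.} and the one-dimensionality of components, not on characteristic. The cleanest formulation is: given $f\in T_G(L)$ multihomogeneous, its full multilinearization $\tilde f$ is multilinear and, by the scalar-factorization of admissible substitutions afforded by $\dim L_g\leq 1$, vanishes on $L$; thus $\tilde f \in T_G(L)$ is a multilinear identity, and the substitution mapping all copies of each variable to a common variable sends $\tilde f$ to a nonzero multiple of $f$ plus lower-degree multihomogeneous identities already handled by induction on $\sum_i d_i$, placing $f$ in the $T_G$-ideal generated by the multilinear identities of $L$.
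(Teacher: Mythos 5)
Your reduction to multihomogeneous identities is fine, but the passage from multihomogeneous to multilinear has a genuine gap in positive characteristic, which the theorem must cover (it is stated for any infinite field of characteristic $\neq 2$, and the paper invokes it precisely in the characteristic $p>2$ section). You take $\tilde f$ to be the full multilinearization of $f$ and claim that the diagonal substitution $x_{i,1}^{g_i}=\cdots=x_{i,d_i}^{g_i}=x_i^{g_i}$ recovers ``a nonzero multiple of $f$ plus lower-degree multihomogeneous identities''. Neither claim is correct: the diagonal specialization of the multilinear component of $f(x_{i,1}+\cdots+x_{i,d_i},\ldots)$ is exactly $\bigl(\prod_i d_i!\bigr)f$, with no lower-degree terms, and the coefficient $\prod_i d_i!$ vanishes whenever some $d_i\geq p$. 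Your parenthetical assertion that $\mathrm{char}\,K\neq 2$ guarantees the $d_i!$ are invertible is false ($3!=0$ in characteristic $3$), and ``never dividing'' does not sidestep anything: the problem is not a forbidden division, but that the only multiple of $f$ reachable from $\tilde f$ by substitution is then the zero polynomial. This is the classical Frobenius obstruction, and it is the reason $T_G$-ideals over infinite fields of characteristic $p$ are in general \emph{not} generated by their multilinear elements. Note also that your argument uses $\dim L_g\leq 1$ only where it is not needed: that $\tilde f$ is an identity is automatic over an infinite field, since it is a multihomogeneous component of the identity $f(x_{i,1}+\cdots+x_{i,d_i},\ldots)$; so, as written, your scheme would prove the false general statement.

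The paper avoids the obstruction by never forming the symmetrized multilinearization. Instead it chooses an arbitrary multilinear \emph{lift} $h$ of $f$: in each monomial of $f$, the successive occurrences of $x_i^{g_i}$ are replaced by distinct copies $x_{i,1}^{g_i},\ldots,x_{i,d_i}^{g_i}$ (no sum over distributions), so that identifying the copies sends $h$ back to $f$ exactly, with coefficient $1$. The price is that $h\in T_G(L)$ is no longer automatic --- the monomials of $f$ are desymmetrized incoherently --- and this is exactly where $\dim L_g\leq 1$ is needed: every admissible substitution into $h$ factors as a product of scalars times the evaluation of $h$ at the spanning vectors $d_{g_i}$, and that evaluation coincides with $f(d_{g_1},\ldots,d_{g_r})=0$. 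Hence $h$ is a multilinear graded identity and $f$ follows from $h$ by the graded substitution identifying the copies. Your proof would need to be restructured along these lines to be valid outside characteristic zero.
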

\begin{proof}
	Since $K$ is infinite, we can consider the multihomogeneous graded identities. Let $f=f(x_{1}^{g_1},\ldots, x_{r}^{g_r})$ be a multihomogeneous graded identity of $L$. As $\dim_KL_g\leq 1$  for every $g\in G$, we have that each admissible substitution $\varphi$ can be taken to map $x_{i}^{g_i}$ to $\xi_id_{g_i}$ where
	 $\xi$'s are commutative and associative variables over $K$. Here, as above, the $d_{g_i}$ span the vector spaces  $L_{g_i}$. Hence
	  we can assume that every $g_i\in \mathrm{supp}\ L$, and we have
	\begin{equation}\label{idemult}
	\varphi(f(x_{1}^{g_1},\ldots,x_{r}^{g_r}))=\xi_1^{n_1}\cdots \xi_r^{n_r}f(d_{g_1},\ldots,d_{g_r}).
	\end{equation}
	Fix a $g_i$ with $\deg_{x_i^{g_i}}f=n_i\geq 1$. Take new variables $x_{i,j}^{g_i}$ where $1\leq j\leq n_i$, and define the multilinear graded polynomial $h(x_{1,1}^{g_1},\ldots,x_{1,n_1}^{g_1},x_{2,1}^{g_2},\ldots, x_{r,n_r}^{g_r})$
	such that
\[
h(\underbrace{x_{1}^{g_1},\ldots,x_{1}^{g_1}}_{n_1},x_{2}^{g_2},\ldots, x_{r}^{g_r})=f(x_{1}^{g_1},\ldots,x_{r}^{g_r}).
\]
But $x_{i,j}^{g_i}$ can be evaluated to $d_{g_i}$ for each $1\leq i\leq r$ and $1\leq j\leq n_i$. Then Eq.~\eqref{idemult} implies that $h$ is a graded identity and that $f$ is a consequence of $h$.
\end{proof}

\begin{remark}
The choice of the polynomial $h$ in the above theorem
need not be given by the multilinearisation process. Due to this reason it may not be unique. For example $ f = [x^g_1, x^h_2, x^g_1]$ can come from $ g = [x^g_1, x^h_2, x^g_3]$. Pay attention that the complete linearisation of $f$ will be $[x^g_1, x^h_2, x^g_3] + [x^g_3, x^h_2, x^g_1]$. 

The proof of the previous theorem is valid without the hypothesis of Lie algebra, that is in other contexts (associative or non-associative algebras) as long as $\dim L_g\le 1$ for each $g\in G$.. 
\end{remark}

Thus in order to determine a basis of the $\mathbb{Z}$-graded identities of $U_1$ in characteristic $p>2$ we run into the following problem. We have to give an efficient characterization of what would be a bad sequence, that is an equivalent of  Proposition \ref{good} for this case. On the other hand such an obstruction does not occur in the case of the special linear Lie algebra $sl_q(K)$ with the Pauli gradings whenever $q$ is an arbitrary prime (not necessarily equal to $p$). The reader can see in \cite[Theorem 3.13]{CDF} that the argument in its proof holds over an infinite field of characteristic $p$ different from two. 

\begin{theorem}[cf.~\cite{CDF}, Theorem 3.13]
	Let $K$ be an infinite field of characteristic $p$ different from two. Let $q$ be a prime and denote $L=sl_q(K)$ the special linear Lie algebra with the Pauli grading by the group $\mathbb{Z}_q\times \mathbb{Z}_q$. The polynomials	in (\ref{basis1}), (\ref{basis2}), (\ref{basis3}) and 
\[
[x_4,x_1,x_2,x_3]-\lambda [x_4,x_3,x_2,x_1],
\]
	for some scalar $0\ne \lambda\in K$, form a basis for the graded identities of $L$.
\end{theorem}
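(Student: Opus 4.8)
The plan is to establish the result by combining Theorem~\ref{identimult} with a description of the multilinear graded identities of $sl_q(K)$ under the Pauli grading, since Theorem~\ref{identimult} guarantees that every graded identity of $L=sl_q(K)$ follows from its multilinear ones (as each homogeneous component has dimension $\leq 1$). First I would recall the structure of the Pauli grading: by~\eqref{pauli} the component $L_{(\bar{i},\bar{j})}$ is one-dimensional spanned by $A^iB^j$ for $(\bar{i},\bar{j})\neq(\bar0,\bar0)$, and~\eqref{comm} gives the bracket $[A^iB^j,A^rB^s]=(\varepsilon^{-rj}-\varepsilon^{-is})A^{i+r}B^{j+s}$. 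The identities~\eqref{basis1},~\eqref{basis2},~\eqref{basis3} already hold for any Lie algebra with $\dim L_g\leq1$, so the only content is verifying that the remaining identity $[x_4,x_1,x_2,x_3]-\lambda[x_4,x_3,x_2,x_1]$ holds for a suitable $\lambda$ and that these four families generate all multilinear graded identities.

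The key steps, in order, would be as follows. First, verify that the displayed degree-four polynomial is genuinely a graded identity of $L$: by Lemma~\ref{Diogo3.7.} (whose proof works verbatim in the Pauli setting, using $\dim L_g\leq1$) it suffices to check it on the canonical basis elements, so I would compute both monomials via~\eqref{comm} on admissible substitutions and read off the scalar $\lambda$ as a ratio of products of differences of roots of unity; the primality of $q$ ensures the relevant scalars are nonzero and that $\lambda$ is well-defined whenever both monomials are nonidentities. Second, following the template of Section~\ref{identities}, I would reduce an arbitrary multilinear graded polynomial modulo the $T_G$-ideal generated by the four families to a linear combination of left-normed monomials $N_\sigma=[x_{s_n},x_{s_{\sigma(1)}},\ldots,x_{s_{\sigma(n-1)}}]$, then show that any two nonidentity $N_\sigma$, $N_\tau$ are equivalent modulo this ideal. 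The reducibility machinery (Definition~\ref{redu} and Lemmas~\ref{l1},~\ref{l2},~\ref{l3}) transfers to this case once the degree-$\leq4$ base case is settled, and the degree-four case is exactly where the extra identity $[x_4,x_1,x_2,x_3]-\lambda[x_4,x_3,x_2,x_1]$ is needed to link the two ``extreme'' monomials that~\eqref{basis3} alone cannot relate.

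The main obstacle I anticipate is the degree-four analysis: the characterization of good and bad sequences for the Pauli grading is governed by the additive structure of $\mathbb{Z}_q\times\mathbb{Z}_q$ rather than of $\mathbb{Z}$, so the case distinctions of Lemma~\ref{degree4} must be reworked. Here the primality of $q$ is essential, because it forces $\mathbb{Z}_q$ to be a field and guarantees that the scalar coefficients $\varepsilon^{-rj}-\varepsilon^{-is}$ vanish precisely when the corresponding brackets are degenerate; for composite $q$ zero divisors would break the dichotomy and the extra degree-four identity would not suffice. I would therefore expect the bulk of the verification to consist of confirming, case by case on the support $\mathbb{Z}_q^2\setminus\{(\bar0,\bar0)\}$, that every multilinear identity of degree four lies in the ideal generated by the four families, and that this base case then propagates through the inductive argument exactly as in the proof of Theorem~\ref{mainresult1}. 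Since~\cite{CDF} carries this out in characteristic zero and the arithmetic involves only the roots of unity and differences in $\mathbb{Z}_q$, I would check that no step divides by $p$, so that the argument remains valid over any infinite field of characteristic $p\neq2$; this is the point the statement flags as needing attention, and it is routine once one confirms that $p$ never appears in a denominator.
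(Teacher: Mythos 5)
Your proposal matches the paper's own treatment: the paper likewise disposes of this theorem by combining Theorem~\ref{identimult} (which, over an infinite field with all homogeneous components of dimension $\leq 1$, reduces everything to multilinear graded identities) with the observation that the multilinear combinatorial argument of \cite[Theorem~3.13]{CDF} is characteristic-free for $p\neq 2$, exactly the two pillars of your plan. The paper does not redo the degree-four and reducibility analysis you sketch--it simply cites \cite{CDF} for it--so your reconstruction of that machinery is just an expanded version of the same route, not a different one.
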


We proceed with the proof of Theorem \ref{mainresult2}. It suffices to consider only the $\mathbb{Z}$-graded multilinear identities for $U_1$. Although most of the results in Section \ref{identities} are valid for algebras over infinite fields of characteristic different from 2, we have to pay attention to the results of Lemmas \ref{3.6}, \ref{monosem0}, \ref{degree4} and \ref{l3}, and Propositions \ref{monident} and \ref{good}. We shall prove each one of these in our context.

\begin{lemma}\label{eq3.6}
	The graded identities \eqref{C1identp} and \eqref{C2identp} hold for $U_1$.
\end{lemma}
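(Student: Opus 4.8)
The plan is to verify the two graded identities \eqref{C1identp} and \eqref{C2identp} directly using the multiplication \eqref{multiwitt} on the basis $\{e_n\mid n\in\mathbb{Z}\}$ of $U_1$, exactly as in the characteristic zero case treated in Lemma~\ref{3.6}, but now being careful about which scalars vanish modulo $p$. Since every homogeneous component $L_i$ is one dimensional, spanned by $e_i$, an arbitrary admissible substitution into a graded polynomial sends $x_i^a$ to a scalar multiple of $e_a$; by Lemma~\ref{Diogo3.7.} (or by $K$ being infinite) it suffices to evaluate on the distinguished basis elements $e_a$ themselves.

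First I would treat \eqref{C1identp}. Here the hypothesis is that $b-a\equiv 0\pmod p$. Evaluating the commutator on basis elements and applying \eqref{multiwitt} gives
\begin{equation}
[e_a,e_b]=(b-a)e_{a+b}.
\end{equation}
Since $b-a$ is a multiple of $p$ and $\mathrm{char}\,K=p$, the scalar $b-a$ equals zero in $K$, so $[e_a,e_b]=0$. Because the substitution of an arbitrary homogeneous pair only rescales this by a product of scalars, the identity \eqref{C1identp} holds on all admissible substitutions. Note this is precisely the place where the characteristic $p$ hypothesis is used: in characteristic zero one needs $a=b$ to force the bracket to vanish, whereas here $b-a\equiv 0\pmod p$ suffices, which is why \eqref{C1identp} replaces \eqref{Cbasis1}.

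Next I would treat \eqref{C2identp}, which is identical in form to \eqref{Cbasis3}. Evaluating both length-three commutators on basis elements via \eqref{multiwitt}, as already computed in the proof of Lemma~\ref{3.6}, yields
\begin{equation}
[e_a,e_b,e_c]=(b-a)(c-a-b)\,e_{a+b+c}=\beta\, e_{a+b+c},\qquad [e_a,e_c,e_b]=(c-a)(b-a-c)\,e_{a+b+c}=\alpha\, e_{a+b+c}.
\end{equation}
Hence $\alpha[e_a,e_b,e_c]-\beta[e_a,e_c,e_b]=(\alpha\beta-\beta\alpha)e_{a+b+c}=0$, and again one-dimensionality of the components upgrades this to all admissible substitutions. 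The computations here are the same polynomial identities in $a,b,c$ as in characteristic zero; the only subtlety is that the equality of the two products $\alpha\beta$ and $\beta\alpha$ is an identity of commutative scalars valid over any commutative ring, so reduction modulo $p$ causes no difficulty. I do not expect any genuine obstacle in this lemma, as it is a routine transcription of Lemma~\ref{3.6} to positive characteristic; the only point demanding care is confirming that the vanishing in \eqref{C1identp} comes from the congruence $b-a\equiv 0\pmod p$ rather than from literal equality $a=b$, so that the weaker hypothesis of the lemma is correctly used.
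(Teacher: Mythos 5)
Your proof is correct and takes essentially the same route as the paper, which simply defers to the multiplication rule \eqref{multiwitt} and the computations in the proof of Lemma~\ref{3.6}; you have merely written out those computations in full, correctly isolating the one place where characteristic $p$ matters (the vanishing of $b-a$ in $K$ for \eqref{C1identp}) and noting that the cancellation $\alpha\beta-\beta\alpha=0$ for \eqref{C2identp} is characteristic-free.
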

\begin{proof}
The proof follows from the rules of multiplication \eqref{multiwitt} and from the proof of Lemma \ref{3.6}. 
\end{proof}

\begin{lemma}
	Let $M=[x_1^{g_1},\ldots, x_n^{g_n}]$ be a multilinear monomial in $L\langle X_G \rangle$. If $M\in T_\mathbb{Z}(U_1)$ is a graded identity for $U_1$ then $M$ is a consequence of the identities \eqref{C1identp}.
\end{lemma}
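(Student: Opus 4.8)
The plan is to mimic the proof of Proposition \ref{monident}, replacing the characteristic-zero criterion ``a difference of degrees vanishes'' by the positive-characteristic criterion ``a difference of degrees is divisible by $p$''. I would denote by $J$ the $T_\mathbb{Z}$-ideal generated by the polynomials \eqref{C1identp} and argue by induction on the length $n$ of $M$. The base cases are immediate: for $n=1$ the monomial $x_1^{g_1}$ is never an identity of $U_1$, since the algebra has full support, so the statement is vacuous; for $n=2$ the monomial $[x_1^{g_1},x_2^{g_2}]$ is an identity precisely when $p\mid(g_2-g_1)$, and this is exactly an instance of \eqref{C1identp}.

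For the inductive step with $n\geq 3$, I set $M'=[x_1^{g_1},\ldots,x_{n-1}^{g_{n-1}}]$. If $M'\in T_\mathbb{Z}(U_1)$, then the induction hypothesis gives $M'\in J$, and hence $M=[M',x_n^{g_n}]\in J$ as well, because $J$ is a $T_\mathbb{Z}$-ideal and so is closed under forming further commutators. Thus the remaining case is $M'\notin T_\mathbb{Z}(U_1)$. Writing $g=g_1+\cdots+g_{n-1}$ for the $\mathbb{Z}$-degree of $M'$ and using the multiplication \eqref{multiwitt} together with $\dim L_i\leq 1$, every admissible substitution $a_i=\lambda_i e_{g_i}$ produces
\[
M(a_1,\ldots,a_n)=\Bigl(\textstyle\prod_{i=1}^{n}\lambda_i\Bigr)\,c\,(g_n-g)\,e_{g+g_n},
\]
where $c\in K$ is the scalar obtained by evaluating $M'$ on the $e_{g_i}$; by hypothesis $c\neq0$. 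As $K$ is infinite I may choose all $\lambda_i\neq0$, so $M\in T_\mathbb{Z}(U_1)$ forces $g_n-g\equiv0\pmod p$.

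It then remains to recognize $M$ as a consequence of \eqref{C1identp}. Since $p\mid(g_n-g)$, the commutator $[x_1^{g},x_2^{g_n}]$ is exactly the identity \eqref{C1identp} with $a=g$ and $b=g_n$. The substitution sending $x_1^{g}$ to the monomial $M'$ (which indeed has $\mathbb{Z}$-degree $g$) and $x_2^{g_n}$ to $x_n^{g_n}$ is an admissible graded endomorphism of the free algebra, and it carries $[x_1^{g},x_2^{g_n}]$ to $[M',x_n^{g_n}]=M$; hence $M\in J$, completing the induction. The only delicate point I anticipate is the passage from ``$M$ is a graded identity'' to the divisibility $p\mid(g_n-g)$: this rests on the explicit coefficient coming from \eqref{multiwitt}, on $\dim L_i\leq1$ (equivalently Lemma \ref{Diogo3.7.}), and on the infiniteness of $K$ to ensure the surviving factor $(g_n-g)$ must vanish in $K$. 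Once this is secured, identifying $M$ as a substitution instance of \eqref{C1identp} is routine.
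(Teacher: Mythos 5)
Your proof is correct and follows essentially the same route as the paper: induction on the length of the monomial, reduction to $[x_1^{g},x_2^{g_n}]$ via the one-dimensionality of the homogeneous components, and the divisibility criterion $p\mid(g_n-g)$ coming from the multiplication \eqref{multiwitt}; you merely spell out the substitution endomorphism that the paper leaves implicit. One tiny quibble: the vanishing of $(g_n-g)$ in $K$ follows from $\mathrm{char}\,K=p$ alone, not from the infiniteness of $K$ (which is needed elsewhere, e.g.\ in Theorem \ref{identimult}), but this does not affect the argument.
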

\begin{proof}
	We denote by $J$ the $T_G$-ideal generated by the polynomials \eqref{C1identp}. We shall prove the claim by induction on the length $n$ of the monomial. The result is obvious for $n=1$ and for $n=2$ so we assume  $n\geq 3$. Let $M^{\prime}=[x_1^{g_1},\ldots, x_{n-1}^{g_{n-1}}]$. If $M^{\prime}\in T_\mathbb{Z}(U_1)$ then it lies in $J$ by the induction hypothesis, hence $M\in J$. We assume now $M^{\prime}\notin T_\mathbb{Z}(U_1)$. Consider $g=\| M^\prime\|$, then the result of every admissible substitution in $M^{\prime}$ is a scalar multiple of $e_g$, and for some admissible substitution the scalar is nonzero. 
	Therefore $M\in T_\mathbb{Z}(U_1)$ if and only if $[x_1^g,x_2^{g_n}]\in T_\mathbb{Z}(U_1)$. The commutator $[x_1^g,x_2^{g_n}]$ lies in $T_\mathbb{Z}(U_1)$ if and only if $g-g_n$ is a multiple of $p$, and hence $M$ lies in $J$. Thus in all cases we obtain $M\in J$, as required.
\end{proof}

The notation we introduce now is the analogue of $\mathcal{C}({\bf g})$ for an $n$-tuple ${\bf g}\in\mathbb{Z}^n$. Recall that we delete all entries of ${\bf g}$ which are equal to 0 and obtain $\mathcal{C}({\bf g})$. But when dealing with a field of positive characteristic $p$ we  can work with the integers modulo $p$. Thus we denote by $\overline{\mathcal{C}({\bf g})}$ the sequence obtained by ${\bf g}\in\mathbb{Z}^n$ after deleting the entries that are multiples of $p$. With abuse of notation we will employ the same symbol to denote the sequence formed by these non-zero elements in ${\bf g}$ but considered as belonging to the group $\mathbb{Z}_p$. Also $|\overline{\mathcal{C}({\bf g})}|$ will denote the length of the new tuple, and $|\overline{\mathcal{C}({\bf g})}|_{\bar{g}}$ will denote how many times the element $0\ne g\in \mathbb{Z}_p$ appears in $\overline{\mathcal{C}({\bf g})}$. We tacitly assume that all this notation takes place in the group $\mathbb{Z}_p$.
 
We use the same notions of \textsl{bad}, \textsl{good}, and \textsl{standard} tuples as in the case of characteristic 0.
\begin{lemma}\label{eqmonosem0}
	Let ${\bf g}\in \mathbb{Z}^{n}$ be an $n$-tuple, $n\ge 3$, containing at least two elements that are not multiples of $p$. Over an infinite field of characteristic $p>2$, the $n$-tuple ${\bf g}$ is bad if and only if $\overline{\mathcal{C}({\bf g})}$ is bad.
\end{lemma}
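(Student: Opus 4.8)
The plan is to prove Lemma \ref{eqmonosem0} by reducing it to the characteristic zero statement (Lemma \ref{monosem0}), but with all degree arithmetic carried out in $\mathbb{Z}_p$ rather than in $\mathbb{Z}$. The key observation is that, thanks to Lemma \ref{eq3.6}, the commutator $[x_1^a, x_2^b]$ is a graded identity of $U_1$ precisely when $a \equiv b \pmod p$, and more generally whether a multilinear monomial is an identity depends only on the residues of the partial sums of degrees modulo $p$. Thus the natural ``alphabet'' for deciding goodness in characteristic $p$ is $\mathbb{Z}_p$, and the entries of ${\bf g}$ that are multiples of $p$ behave, for the purpose of evaluating commutators, exactly as the zero entries did in the characteristic zero argument.

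First I would reduce to the case where ${\bf g}=\mathcal{C}({\bf g})$ in the sense that all entries are arranged so that the entries which are multiples of $p$ sit at the tail. As in the opening of Lemma \ref{monosem0}, reordering the entries of ${\bf g}$ is realized by a graded automorphism of $L\langle X_G\rangle$ carrying $P_n^{\bf g}$ to $P_n^{\bf h}$, so goodness is preserved under permutations and I may assume
\[
{\bf g}=(a_1,\ldots,a_k,\underbrace{m_1,\ldots,m_{n-k}}_{\text{multiples of } p}),
\]
where none of $a_1,\ldots,a_k$ is a multiple of $p$; by hypothesis $k\ge 2$. The content of $\overline{\mathcal{C}({\bf g})}$ is then the tuple $(\bar a_1,\ldots,\bar a_k)$ of residues in $\mathbb{Z}_p$.

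Next I would run the two implications exactly as in the proof of Lemma \ref{monosem0}, replacing ``degree zero in $\mathbb{Z}$'' by ``degree a multiple of $p$'' throughout. If $\overline{\mathcal{C}({\bf g})}$ is good, I may take it standard, so $[x_1^{a_1},\ldots,x_k^{a_k}]\notin T_\mathbb{Z}(U_1)$; inserting the tail variables $x_{k+1}^{m_1},\ldots,x_n^{m_{n-k}}$ of degree divisible by $p$ immediately after $x_1^{a_1}$ produces a monomial whose value is computed by multiplication \eqref{multiwitt}, and since each insertion only shifts the running index by a multiple of $p$, the resulting commutator is again nonzero on the standard evaluation. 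This shows ${\bf g}$ is good. Conversely, if ${\bf g}$ is good I may assume it standard, so $[x_1^{a_1},x_2^{a_2}]\notin T_\mathbb{Z}(U_1)$, and by the same renaming-and-insertion reasoning used in Lemma \ref{monosem0} I can slide the multiple-of-$p$ variables out of the way and conclude that $\overline{\mathcal{C}({\bf g})}$ is good. The crucial point making both insertions legitimate is precisely Lemma \ref{eq3.6}: a variable whose degree is a multiple of $p$ can be freely moved through a commutator without turning a non-identity into an identity, since bracketing against it changes the total degree only by a multiple of $p$ and hence never forces a vanishing structure constant in \eqref{multiwitt} modulo $p$.

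The step I expect to be the main obstacle is verifying that inserting a variable of $p$-divisible degree never accidentally kills the monomial. In characteristic zero a zero-degree insertion leaves all partial sums of degrees unchanged, so the analysis is transparent; in characteristic $p$ a multiple-of-$p$ insertion leaves the partial sums unchanged only modulo $p$, and one must check that the relevant structure constants $j-i$ in \eqref{multiwitt}, which are genuine integers, do not themselves become divisible by $p$ at the point of insertion and thereby annihilate the value. The way around this is to observe that goodness of a multilinear monomial is detected by the standard evaluation $e_{g_1},\ldots,e_{g_n}$ (as in Lemma \ref{Diogo3.7.}), and that the vanishing of that evaluation is governed entirely by whether some bracketing step pairs two arguments of equal degree modulo $p$; since we are only inserting variables whose degree is $0$ in $\mathbb{Z}_p$ next to a variable whose degree is nonzero in $\mathbb{Z}_p$, no such equal-residue pairing is created, and the monomial survives. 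This reduces the whole argument to the combinatorics already handled in Lemma \ref{monosem0}, now read in $\mathbb{Z}_p$.
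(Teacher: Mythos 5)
Your proposal is correct and follows essentially the same route as the paper's proof: reorder ${\bf g}$ so the $p$-divisible entries sit at the tail, insert them immediately after a variable of nonzero residue to pass from goodness of $\overline{\mathcal{C}({\bf g})}$ to goodness of ${\bf g}$, and slide/delete them (the structure constants from \eqref{multiwitt} being unchanged modulo $p$) for the converse. One caution: your intermediate claim that a $p$-divisible-degree variable ``never forces a vanishing structure constant'' is false in general (it fails when the accumulated degree is itself $\equiv 0 \pmod p$), but your final paragraph correctly restricts all insertions to occur next to a variable of nonzero residue, which is exactly the configuration of the paper's monomial \eqref{monbom11}.
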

\begin{proof}
By Theorem \ref{identimult}, we can consider monomials in $P_n^{\bf g}$. As in Lemma \ref{monosem0} we suppose that
\begin{equation}\label{tapleorg}
{{\bf g}}=(a_1,\ldots,a_k,b_1,\ldots,b_l\underbrace{0,\ldots,0}_{n-(l+k)})
\end{equation} 
where each $\bar{a}_i\neq 0$ and $\bar{b}_i= 0$ in $\mathbb{Z}_p$. By hypothesis we have $k <n$.
	
	Let $\overline{\mathcal{C}({\bf{g}})}=(a_1,\ldots,a_k)$ be standard. Therefore, since $a_1$ is not a multiple of $p$, the monomial
\begin{equation}\label{monbom11}
[x_{1}^{a_1},x_{k+1}^{b_1},\ldots,x_{k+l}^{b_l},x_{k+l+1}^{0},\ldots,x_{n}^{0},x_2^{a_{2}},\ldots,x_k^{a_{k}}]
\end{equation}	
is not an identity of $U_1$. It follows ${\bf g}$ is also good.

Conversely, assume ${\bf g}$ is good. As before we can suppose ${\bf g}$ is standard. This implies $[x^{a_1}_1,x^{a_2}_2]$ is not in $T_{\mathbb{Z}}(U_1)$. Then renaming the variables, if necessary, and considering the tuple given by \eqref{tapleorg}, we obtain that the monomial \eqref{monbom11} is not a graded identity of $U_1$. This implies $\overline{\mathcal{C}({\bf{g}})}$ is also good. 
\end{proof}

\newpage
\begin{proposition}\label{eqgood}
Let ${\bf g}$ be a $n$-tuple in $\mathbb{Z}^{n}$ such that $|\overline{\mathcal{C}({\bf g})}|>0$.  Over an infinite field of characteristic $p>2$, the sequence $\mathcal{C}({\bf g})$ is bad if and only if 
	\begin{enumerate}
	\item[(i)] $|\overline{\mathcal{C}({\bf g})}|\neq 1$,
		\item[(ii)] $\overline{\mathcal{C}({\bf g})}$ is a sequence of elements in $g\mathbb{Z}$ modulo $p$, for some integer $g$ such that $1\leq g\leq p-1$; and,
		\item[(iii)] If some multiple of $p-g$ appears in ${\bf g}$ then
\[
\overline{\mathcal{C}({\bf g})}=(\lambda_1(p-g),\underbrace{g,\ldots,g}_{\lambda_1},\lambda_2(p-g),\underbrace{g,\ldots,g}_{\lambda_2},\ldots,\lambda_l(p-g),\underbrace{g,\ldots,g}_{\lambda_l}, g,\ldots, g),
\]
up to a permutation. Here $1\le\lambda_1\leq \lambda_2\leq \ldots\leq \lambda_l\leq p-1$. Further we require $|\overline{\mathcal{C}({\bf g})}|_{\bar{g}}\geq \lambda_1+\cdots+\lambda_l+2$. Otherwise, $\overline{\mathcal{C}({\bf g})}$ is a sequence formed only by $g$.
	\end{enumerate}
\end{proposition}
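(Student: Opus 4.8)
The plan is to transport the characteristic-zero argument of Proposition~\ref{good} into the cyclic group $\mathbb{Z}_p$, the governing dictionary being that the structure constants of $U_1$, namely $[e_i,e_j]=(j-i)e_{i+j}$ from \eqref{multiwitt}, now vanish in $K$ precisely when $i\equiv j\pmod p$ rather than when $i=j$. Concretely, an iterated bracket evaluates as $[e_{c_1},\ldots,e_{c_m}]=\bigl(\prod_{k=2}^{m}(c_k-(c_1+\cdots+c_{k-1}))\bigr)e_{c_1+\cdots+c_m}$, and in characteristic $p$ this scalar is zero exactly when one of the factors $c_k-(c_1+\cdots+c_{k-1})$ is divisible by $p$. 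Thus every equality of $\mathbb{Z}$-degrees used in the proof of Proposition~\ref{good} is to be read as a congruence modulo $p$, the integer $-g$ is represented by $p-g$, and the free behaviour of $\mathbb{Z}$ is replaced by the cyclic behaviour of $\mathbb{Z}_p$; this last point is where the bound $\lambda_i\le p-1$ and the modular surplus count in (iii) originate.

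First I would reduce to the normalised situation. By Theorem~\ref{identimult} it suffices to treat multilinear monomials, and by Lemma~\ref{eqmonosem0} together with the reordering automorphism from the beginning of the proof of Lemma~\ref{monosem0}, the tuple ${\bf g}$ is bad if and only if $\overline{\mathcal{C}({\bf g})}$ is bad. Hence I may assume that every entry is nonzero in $\mathbb{Z}_p$ and work entirely with $\overline{\mathcal{C}({\bf g})}$ inside $\mathbb{Z}_p$. For the direction ``(i), (ii), (iii) $\Rightarrow$ bad'', I would argue directly with the partial sums $S_{k}=c_1+\cdots+c_{k}$: condition (ii) places all degrees in the cyclic subgroup generated by $g$, and the balancing prescribed by (iii) --- each block $\lambda_i(p-g)$ being compensated by exactly $\lambda_i$ copies of $g$, with a surplus of at least two further copies of $g$ --- forces, in every left-normed arrangement, some partial sum $S_{k-1}$ to become congruent to $c_k$ modulo $p$, so that the corresponding factor, and hence the whole monomial, vanishes. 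The bound $\lambda_i\le p-1$ guarantees that $\lambda_i(p-g)\equiv-\lambda_i g$ is the genuine representative of the deficit it is meant to cancel, so that no larger coefficient silently wraps around and reappears among the copies of $g$.

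For the converse, ``bad $\Rightarrow$ (i), (ii), (iii)'', I would run the induction of Proposition~\ref{good} verbatim, reading each degree identity as a congruence modulo $p$. Assuming ${\bf g}=\overline{\mathcal{C}({\bf g})}$ standard and choosing the largest $j$ with $[x_{i_1},\ldots,x_{i_j}]\notin T_{\mathbb{Z}}(U_1)$ but $[x_{i_1},\ldots,x_{i_{j+1}}]\in T_{\mathbb{Z}}(U_1)$, an inductive reduction brings us to $j=n-1$. Splitting the monomial as $[y,y_1,y_2,y_3]$ with $\|y\|+\|y_1\|+\|y_2\|\equiv\|y_3\|\pmod p$, the fact that $[y,y_1,y_3,y_2]$ is an identity forces, modulo $p$, either $\|y\|+\|y_1\|\equiv\|y_3\|$ (which gives $\|y_2\|\equiv 0$, excluded) or $\|y\|+\|y_1\|+\|y_3\|\equiv\|y_2\|$, whence $\|y\|+\|y_1\|\equiv 0$ and $\|y_2\|\equiv\|y_3\|$. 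Iterating exactly as in the characteristic-zero proof yields the block structure in which all degrees are expressed as multiples of a single generator $g\in\{1,\ldots,p-1\}$, giving (ii), and in which the negative multiples are compensated by the prescribed number of copies of $g$, giving (iii); the coefficients are automatically confined to $\{1,\ldots,p-1\}$ because we are working in $\mathbb{Z}_p$.

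The main obstacle is precisely the wrap-around phenomenon that has no analogue in characteristic zero: a factor $c_k-(c_1+\cdots+c_{k-1})$ may now vanish because a partial sum has wrapped around modulo $p$ rather than because of an honest coincidence of integers, and, dually, since every nonzero residue generates $\mathbb{Z}_p$, condition (ii) is no longer an a priori restriction but must be produced, together with the correct choice of $g$, from the counting data itself. The delicate point is therefore to verify that the combined conditions --- the block form of (iii), the surplus $|\overline{\mathcal{C}({\bf g})}|_{\bar g}\ge\lambda_1+\cdots+\lambda_l+2$, and the bound $\lambda_i\le p-1$ --- capture modular cancellation exactly: that no configuration outside this list can be bad, so that no good sequence is mistakenly excluded, and that every configuration in it is bad. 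Controlling this finite cyclic bookkeeping, in place of the unbounded but coincidence-free arithmetic of $\mathbb{Z}$, is the crux of the argument.
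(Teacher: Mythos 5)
Your proposal follows essentially the same route as the paper's own proof: after the reduction via Theorem \ref{identimult} and Lemma \ref{eqmonosem0}, the paper likewise reruns the maximal-$j$ induction and the $[y,y_1,y_2,y_3]$ case analysis of Proposition \ref{good} with every degree equality read as a congruence modulo $p$, with $p-g$ representing $-g$ and the cyclic bound $\lambda_i\le p-1$. Your explicit partial-sum argument for the direction ``(i)--(iii) $\Rightarrow$ bad'' is, if anything, more detailed than the paper's one-line appeal to the argument of Proposition \ref{good}.
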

\begin{proof}
	As in Proposition~\ref{good} one sees that if (i), (ii) and (iii) hold then ${\bf g}$ is a bad sequence. Reciprocally, as a consequence of the previous lemma, we can consider tuples with nonzero elements only, that is no entry of ${\bf g}$ is a multiple of $p$, thus we assume ${\bf g}=\overline{\mathcal{C}({\bf g})}$.
	Let $j$ be the largest length of a monomial in the variables $\{x_i^{a_i}\mid i=1,\ldots,n\}$ satisfying
\[
[x_{i_1}^{a_{i_1}},\ldots, x_{i_j}^{a_{i_j}}]\notin T_\mathbb{Z}(U_1)
\]
	but
\[[x_{i_1}^{a_{i_1}},\ldots, x_{i_j}^{a_{i_j}}, x_{i_{j+1}}^{a_{i_{j+1}}}]\in T_\mathbb{Z}(U_1).
\]
An obvious induction as in Proposition~\ref{good} gives us that it suffices to consider $j=n-1$. Assume ${\bf g}$ is standard and $a_n\equiv g\pmod{p}$. We define $y=[x_{1}^{a_{1}},\ldots, x_{{n-3}}^{a_{{n-3}}}]$, $y_1=x_{{n-2}}^{a_{{n-2}}}$, $y_2=x_{{n-1}}^{a_{{n-1}}}$, and $y_3=x_{n}^{a_n}$. In this case, $\overline{\|y\|+\|y_1\|+\|y_2\|}=\bar{g}$. Notice that $[y,y_1,y_3,y_2]$ is also an identity for $U_1$. This implies one of the following must hold: either 1) $\overline{\|y\|+\|y_1\|}=\bar{g}$, or 2) $\overline{\|y\|+\|y_1\|-\|y_2\|}=(\overline{p-g})$. 

The equality in case 1) implies $\overline{\|y_2\|}=\bar{0}$ which contradicts the fact that our elements are not multiples of $p$. 

In case 2) we have $\overline{\|y\|+\|y_1\|}=\bar{0}$ and consequently $\overline{\|y_2\|}=\bar{g}$. Suppose that $\overline{\|y\|}\neq \bar{g}$, then $[y,y_2,y_3,y_1]$ is an identity for $U_1$ if and only if $\overline{\|y\|+\|y_2\|+\|y_3\|}=\overline{\|y_1\|}$. This implies $\overline{\|y\|}=-\overline{g}$ since $p\ne 2$. Therefore either $\overline{\|y\|}=\overline{\|y_2\|}=\overline{\|y_3\|}=-\overline{\|y_1\|}$, or $-\overline{\|y\|}=\overline{\|y_2\|}=\overline{\|y_3\|}=\overline{\|y_1\|}$. 
	
If $n=4$, we are done, so assume $n>4$. The sequence ${\bf\tilde{g}}=(a_1,\ldots,a_{n-3})$ is good and we can suppose it is standard. If $({a_{n-1}}-{a_{n-2}})$ is not multiple of $p$ then
$[x_{n-1}^{a_{n-1}},x_{n-2}^{a_{n-2}},x_1^{a_{1}},x_2^{a_{2}},\ldots,x_{n-3}^{a_{n-3}}]$ cannot be an identity of $U_1$. Recall that we can assume, without loss of generality, $\overline{a}_{1}\neq \overline{0}$ or $\overline{a}_2\neq \overline{0}$.

Then we repeat the argument given above. Now consider $[x_{n-1}^{a_{n-1}},x_{n-2}^{a_{n-2}}]\in T_\mathbb{Z}(U_1)$. As $n>4$, we define $\tilde{y}=[x_{1}^{a_{1}},\ldots, x_{{n-4}}^{a_{{n-4}}}]$, $\tilde{y}_1=x_{n-3}^{a_{n-3}}$, $\tilde{y}_2=x_{n-2}^{a_{n-2}}$, $\tilde{y}_3=x_{n-1}^{a_{n-1}}$, and $\tilde{y}_4=x_{n}^{a_{n}}$. Here $\overline{a}_{n}=\overline{a}_{n-1}=\overline{a}_{n-2}=\bar{g}$ and $\overline{\|\tilde{y}\|+\|\tilde{y}_1\|}=-\overline{g}$. We have to analyse two cases: 
	
Case 1.  $\overline{\|\tilde{y}\|}=\bar{g}$. Then $\overline{\|\tilde{y}_1\|}=-2{\bar{g}}$, and we assume $[x^{a_{n-3}}_{n-3},x^{a_{n-2}}_{n-2},x^{a_{n-1}}_{n-1}]$ has degree zero in $\mathbb{Z}_p$. One of the monomials
	$[x^{a_{n-3}}_{n-3},x^{a_{n-2}}_{n-2},x^{a_{n-1}}_{n-1},x^{a_{1}}_1,x^{a_{2}}_2,\ldots,x^{a_{n-4}}_{n-4},x_{n}^{a_{n}}]$ or 	$[x^{a_{n-3}}_{n-3},x^{a_{n-2}}_{n-2},x^{a_{n-1}}_{n-1},x^{a_{1}}_1,x^{a_{2}}_2,x^{a_{3}}_3,\ldots,x^{a_{n-4}}_{n-4},x_{n}^{a_{n}}]$ (or both) is not an identity of $U_1$. As ${a}_1$ and ${a}_2$ are not zero simultaneously in $\mathbb{Z}_p$ we can repeat the argument as it was done above.
	
Case 2. $\overline{\|\tilde{y}\|}\neq g$. We can assume 
	$Y=[\tilde{y}, \tilde{y}_2,\tilde{y}_3,\tilde{y}_4,\tilde{y}_1]$ is an identity for $U_1$. If $n=5$ then the only case to consider is $\overline{\|\tilde{y}\|}+3\bar{g}=\overline{\|\tilde{y}_1\|}$. This implies $\overline{\|\tilde{y}_1\|}=\bar{g}$, and the result follows. If $n>5$, we can assume $\overline{\|\tilde{y}\|}=\bar{0}$, and this implies that if $\tilde{y}=[\tilde{x},x_1]$ then $\overline{\|\tilde{x}\|}=-\overline{\|x_1\|}=\bar{a}$. But 
$[\tilde{x},\tilde{y}_1, \tilde{y}_2,\tilde{y}_3,\tilde{y}_4,x_1]$ is an identity of $U_1$. 	We assume without loss of generality that $\overline{\|x_1\|}\neq \bar{g}$. Hence 
	we repeat this argument finitely many times, and we prove the statement of (ii). Finally (iii) follows from the algorithm above.
\end{proof}

\begin{lemma}\label{eqdegree4}
	Let ${\bf g}$ be a good tuple in $\mathbb{Z}^4$. Over an infinite field of characteristic $p>2$, if $N_{\sigma} \notin T_\mathbb{Z}(U_1)$, for some $\sigma\in S_3$ then $N_\sigma	\sim [x_{1}, x_{2}, x_{3},x_{4}]$. In particular, every graded identities of degree 4 for $U_1$ lies in $I$.
\end{lemma}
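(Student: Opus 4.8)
The plan is to re-run the case analysis of Lemma~\ref{degree4}, now reading every relation among the $\|x_i\|$ modulo $p$ and replacing \eqref{Cbasis1} and \eqref{Cbasis3} by their characteristic-$p$ forms \eqref{C1identp} and \eqref{C2identp}. First I would reduce to a standard tuple: since ${\bf g}$ is good, a reordering automorphism of $L\langle X_{\mathbb{Z}}\rangle$ carries $P_4^{\bf g}$ to the space associated with a standard tuple, so I may assume $[x_1,x_2,x_3,x_4]\notin T_{\mathbb{Z}}(U_1)$ and take it as the target monomial. As in Lemma~\ref{degree4} it then suffices to prove that each non-identity monomial $N_\sigma=[x_4,x_{\sigma(1)},x_{\sigma(2)},x_{\sigma(3)}]$, $\sigma\in S_3$, is equivalent modulo $I$ to $[x_1,x_2,x_3,x_4]$.

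The one genuinely characteristic-sensitive step of the char-$0$ argument transfers as follows. By \eqref{multiwitt}, read modulo $p$, the monomial $[x^a,x^b,x^c]$ evaluates on $(e_a,e_b,e_c)$ to $\beta\,e_{a+b+c}$ and $[x^a,x^c,x^b]$ to $\alpha\,e_{a+b+c}$, with $\alpha=(c-a)(b-c-a)$ and $\beta=(b-a)(c-b-a)$; hence each of these two monomials fails to be an identity precisely when its structure constant is nonzero in $K$, that is, a unit. Consequently, whenever a monomial and the relevant transposition are both non-identities, \eqref{C2identp} lets me solve for one as an invertible scalar multiple of the other, reproducing exactly the division performed in the proof of Lemma~\ref{l310}. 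This is the only place where characteristic $0$ entered essentially, and it survives because ``non-identity'' and ``invertible structure constant'' remain equivalent over $\mathbb{Z}_p$.

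With this in hand I would traverse the five cases (i)--(v) of Lemma~\ref{degree4}. The Jacobi-type identity \eqref{jacobigen} holds over any field, and \eqref{basis2} continues to record the vanishing of a commutator whose factors have complementary degrees, now read mod $p$; so every deduction of the form ``this monomial is an identity, therefore two of the degrees satisfy such a relation'' becomes a congruence modulo $p$ among the $\|x_i\|$. I would rewrite each chain of equivalences \eqref{1eq}, \eqref{monbom}, \eqref{monbom1}, \eqref{xx1}, \eqref{gg1}, \eqref{gg2} of Lemma~\ref{degree4} using $\equiv_I$ and congruences in place of equalities, invoking $p>2$ exactly where the char-$0$ proof derived $\|x_4\|=0$ from $2\|x_4\|=0$ (the same point flagged in Proposition~\ref{eqgood}). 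The main obstacle I anticipate is the bookkeeping of structure constants: over $\mathbb{Z}_p$ a product such as $(c-a)(b-c-a)$ can vanish for reasons invisible in characteristic $0$, so in each subcase I must re-check that the contradictions reached in Lemma~\ref{degree4} (typically forcing some $\|x_i\|\equiv 0$ and thereby collapsing standardness of ${\bf g}$) genuinely persist modulo $p$; this is where $p>2$ and the standardness hypothesis are used repeatedly.

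Finally, for the ``in particular'' clause I would argue as in Lemma~\ref{degree4}. For a good tuple, the first part shows every non-identity $N_\sigma$ is equivalent to $[x_1,x_2,x_3,x_4]$, so Lemma~\ref{incl} yields $P_4^{\bf g}\cap T_{\mathbb{Z}}(U_1)\subseteq I$; for a bad tuple every monomial in $P_4^{\bf g}$ is an identity and hence lies in $I$ by the characteristic-$p$ monomial lemma together with Lemma~\ref{eqmonosem0}. Summing over all ${\bf g}\in\mathbb{Z}^4$ via the decomposition \eqref{decomp}, and reducing an arbitrary degree-$4$ identity to multilinear ones through Theorem~\ref{identimult}, then gives that every graded identity of degree $4$ for $U_1$ lies in $I$.
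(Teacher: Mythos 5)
Your proposal is correct and follows essentially the same route as the paper: the paper's proof of this lemma explicitly re-runs the case analysis (i)--(v) of the characteristic-zero Lemma~\ref{degree4}, reading all degree relations as congruences modulo $p$, replacing \eqref{Cbasis1} and \eqref{Cbasis3} by \eqref{C1identp} and \eqref{C2identp}, and using $p>2$ precisely at the step where $2\overline{\|x_4\|}=\bar{0}$ forces $\overline{\|x_4\|}=\bar{0}$. Your treatment of the ``in particular'' clause via Lemma~\ref{incl}, the characteristic-$p$ monomial lemma, and Theorem~\ref{identimult} matches (and slightly elaborates) what the paper leaves implicit.
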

\begin{proof}
The proof follows the pattern of that for Lemma~\ref{degree4}.
	We suppose, without loss of generality, that $[x_4,x_{1},x_{2},x_{3}]$ is not an identity for $U_1$. It suffices to prove that for every $\sigma \in S_3$ such that $N_\sigma\notin T_\mathbb{Z}(U_1)$, we have
\[
N_\sigma=[x_4,x_{\sigma(1)},x_{\sigma(2)},x_{\sigma(3)}]\sim [x_4,x_{1},x_{2},x_{3}].
\]
	We will analyse the following cases for $N_\sigma$. 
	
		(i) $N_\sigma=[x_4,x_{2},x_{1},x_{3}]$ is not identity for $U_1$ then our claim follows by applying the graded identity \eqref{C2identp}.

		(ii) $N_\sigma=[x_4,x_{1},x_{3},x_{2}]$ is not an identity for $U_1$. This is dealt with precisely as the case in (i).

		(iii) Let $N_\sigma=[x_4, x_3, x_1, x_2]$. 
		
		First we assume $[x_1, x_2, [x_3, x_4]]$ (consequently $[x_4, x_3, [x_1, x_2]]$) is a graded identity for $U_1$. Then $[x_4, x_3, x_2, x_1]$ cannot be an identity of $U_1$. Therefore 
		\begin{equation}\label{eq1eq}
		[x_4, x_3, x_2, x_1]\stackrel{Eq.\ \eqref{C2identp}}{\sim} [x_4, x_3, x_1, x_2].
		\end{equation} 
Observe that $[x_1, x_2, [x_3, x_4]]$ is an identity for $U_1$ if and only if either $\overline{\|x_1\|}=\overline{\|x_2\|}$, or $\overline{\|x_3\|}=\overline{\|x_4\|}$, or else $\overline{\|x_1\|+\|x_2\|}=\overline{\|x_3\|+\|x_4\|}$. Since ${\bf g}$ is standard and $[x_4, x_3, x_1, x_2]$ is not an identity for $U_1$ the first two possibilities do not occur. Thus we are left with $\overline{\|x_1\|+\|x_2\|}=\overline{\|x_3\|+\|x_4\|}$.
		 
Suppose that $\overline{\|x_3\|}=\overline{\|x_1}\|$ if and only if $\overline{\|x_4\|}=\overline{\|x_2\|}$, and we have
\[
[x_1, x_2, x_3, x_4]\equiv_I[x_1, [x_2, x_3], x_4]=[x_4, [x_2, x_3], x_1]+[x_1, x_4,[x_2,x_3]].
\]
Then $[x_1, x_4,[x_2,x_3]]$ lies in 
		$T_\mathbb{Z}(U_1)$ and therefore
\begin{equation}\label{bo1}
[x_1, x_2, x_3, x_4]\equiv_I [x_4, [x_2, x_3], x_1]\stackrel{Eq.\ \eqref{eq1eq}}{\sim}[x_4, x_3, x_1, x_2].
\end{equation}
		On the other hand
\[
[x_4, x_1, x_2, x_3]\equiv_I [x_4, [x_1, x_2], x_3]=-[x_1, x_2,x_4, x_3]\stackrel{Eq.\ \eqref{bo1}}{\sim}[x_4, x_3, x_1, x_2].
\]
		
The above argument yields that we can assume the $G$-degrees of the variables are distinct modulo $p$. Notice that
$[x_4,x_1,x_2,x_3]= [x_4,x_1,x_3,x_2]+[x_4,x_1,[x_2,x_3]]$ in every Lie algebra.	
		If $[x_4,x_1,x_3,x_2]\notin T_G(U_1)$ we are done. On the other hand we have $\overline{\|x_3\|}=\overline{\|x_1\|+\|x_4\|}$, and 
\[
[x_4,x_1,x_2,x_3]\equiv_I[x_4,x_1,[x_2,x_3]]\equiv_I[x_4,[x_1,[x_2,x_3]]]+[x_4,[x_2,x_3],x_1],
\]
that is
\[[x_4,x_1,x_2,x_3]\equiv_I[x_2,x_3,x_1,x_4]+[x_4,[x_2,x_3],x_1].
\]
If $[x_4,[x_2,x_3],x_1]$ is not an identity of $U_1$, we have the claim. Otherwise we obtain $\overline{\|x_3\|}=\overline{\|x_1\|+\|x_4\|}$, $\overline{\|x_4\|}=\overline{\|x_2\|+\|x_3\|}$, 
		\begin{equation}\label{eqmonbom}
		[x_4,x_1,x_2,x_3]\equiv_I[x_2,x_3,x_1,x_4]\equiv_I[x_1,x_3,x_2,x_4]-[x_1,x_2,x_3,x_4],
		\end{equation}		
		and
\[[x_4,x_3,x_1,x_2]=- [x_1,[x_3,x_4],x_2]\equiv_I-[x_1,x_3,x_4,x_2].
\]
		If $[x_1,x_3,x_2,x_4]$ is not an identity, we are done. Otherwise we have $\overline{\|x_1\|+\|x_3\|}=\overline{\|x_2\|}$, and 
		\begin{equation}\label{eqmonbom1}
		[x_4,x_1,x_2,x_3]\equiv_I-[x_1,x_2,x_3,x_4].
		\end{equation}
		By \eqref{eqmonbom}, the monomial $[x_3,x_2,x_1,x_4]$ is not an identity of $U_1$, and our result follows by the above equivalence, since 
		\[
		[x_4,x_3,x_2,x_1]\stackrel{Eq.\ \eqref{eqmonbom1}}{\sim}[x_3,x_2,x_1,x_4].
		\]
		
		The case where  $[x_1, x_2, [x_3, x_4]]$ (and consequently $[x_4, x_3, [x_1, x_2]]$) is not an identity for $U_1$ follows from Eq. \eqref{jacobigen}. 

		(iv) The case $[x_4, x_3, x_2, x_1]$ is analogous to the above case.

		(v) Assume $N_\sigma=[x_4,x_2,x_3,x_1]$ is not an identity of $U_1$. If neither of $[x_4,x_3,x_2,x_1]$ and $[x_4,x_2,x_1,x_3]$ is not an identity then the result follow as in (iii). 
		
		Suppose the monomial $[x_4,x_3,x_2,x_1]$ is an identity of $U_1$. This happens if and only if one of the following three conditions is met: either 1) $\overline{\|x_4\|}=\overline{\|x_3\|}$; or 2) $\overline{\|x_4\|+\|x_3\|}=\overline{\|x_2\|}$; or else 3) $\overline{\|x_4\|+\|x_3\|+\|x_2\|}=\overline{\|x_1\|}$. The latter case is impossible because $[x_4,x_2,x_3,x_1]$ is not an identity of $U_1$. Suppose that 1) holds, then
		\begin{equation}\label{eqxx1}
		[x_4,x_2,x_3,x_1]=-[x_2,x_4,x_3,x_1]\equiv_I-[x_2,x_3,x_4,x_1].
		\end{equation}		
Since $[x_4,x_2,x_1,x_3]$ is an identity of $U_1$, it follows $\overline{\|x_4\|+\|x_2\|}=\overline{\|x_1\|}$, that is $\overline{\|x_3\|+\|x_2\|}=\overline{\|x_1\|}$. This implies
\[
[x_4,x_2,x_3,x_1]\stackrel{Eq.\ \eqref{eqxx1}}{\sim}[x_2,x_3,x_4,x_1]\equiv_I[x_1,x_4,[x_2,x_3]]\stackrel{Eq.\ \eqref{C2identp}}{\sim}[x_4,x_1,x_2,x_3].
\]
		Therefore it remains to consider $\overline{\|x_4\|+\|x_3\|}=\overline{\|x_2\|}$. We have also $\overline{\|x_4\|+\|x_2\|}=\overline{\|x_1\|}$. It follows
		\begin{equation}\label{eqgg1}
		[x_4,x_1,x_2,x_3]\sim[x_1,x_2,x_4,x_3],
		\end{equation}
		and 
\[
[x_4,x_2,x_3,x_1]\sim [x_4,x_2,[x_3,x_1]]\sim [x_1,x_3,[x_4,x_2]]=[x_1,x_3,x_4,x_2]-[x_1,x_3,x_2,x_4],
\]
that is
		\begin{equation}\label{eqgg2}
		[x_4,x_2,x_3,x_1]\sim[x_1,x_3,x_4,x_2]-[x_1,x_3,x_2,x_4]
		\end{equation}
		If $[x_1,x_3,x_2,x_4]$ is not an identity of $U_1$ then the result follows, since
\[
[x_4,x_2,x_3,x_1]\stackrel{Eq.\ \eqref{eqgg2}}{\sim} [x_1,x_3,x_2,x_4]\stackrel{Eq.\ \eqref{C2identp}}{\sim} [x_1,x_2,x_3,x_4]\stackrel{Eq.\ \eqref{eqgg1}}{\sim}[x_4,x_1,x_2,x_3].
\]
Suppose $[x_1,x_3,x_2,x_4]$ is an identity, then either 
 $\overline{\|x_2\|}=\overline{\|x_1\|+\|x_3\|}$, or $\overline{\|x_1\|}=\overline{\|x_3\|}$, in addition to $\overline{\|x_4\|+\|x_2\|}=\overline{\|x_1\|}$. 
 
 If  $\overline{\|x_2\|}=\overline{\|x_1\|+\|x_3\|}$ then
\[
\overline{\|x_1\|+\|x_3\|}=\overline{\|x_4\|+\|x_3\|}\Rightarrow \overline{\|x_1\|}=\overline{\|x_4\|},
\]
and this yields that the graded monomial $[x_4,x_1,x_2,x_3]\in T_G(U_1)$ which is an absurd. 

The equality $\overline{\|x_1\|}=\overline{\|x_3\|}$ reduces to an absurd, because 
\[
\overline{\|x_2\|}=\overline{\|x_4\|+\|x_3\|}=\overline{\|x_4\|+\|x_1\|}=2\overline{\|x_4\|}+\overline{\|x_2\|}
\]
and hence $\overline{\|x_4\|}=\bar{0}$.

		The latter equality implies $\overline{\|x_1\|}=\overline{\|x_2\|}$ which contradicts the hypothesis of ${\bf g}$ being standard. 
\end{proof}

\begin{lemma}\label{eql3}
	Let $n\geq 5$ and let ${\bf g}=(a_1,\ldots,a_{n})$ be a standard $n$-tuple in $\mathbb{Z}^n$.  Over an infinite field of characteristic $p>2$, if there exists a permutation $\sigma \in S_{n-1}$ such that $N_{\sigma}$ is reducible for the distinct pairs $x_i$, $x_j$ and $x_k$, $x_l$ with $i$, $j$, $k$, $l\neq n$ then at least one of the tuples $\Delta_{i,j}({\bf g})$ and $\Delta_{k,l}({\bf g})$ is good.
\end{lemma}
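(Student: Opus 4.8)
The plan is to transplant the proof of Lemma~\ref{l3} to characteristic $p$, replacing every integer identity used there by the corresponding congruence modulo $p$ and invoking the positive-characteristic analogues of the supporting results. As in the characteristic-zero case I would argue by contradiction: assume that both $\Delta_{i,j}({\bf g})$ and $\Delta_{k,l}({\bf g})$ are bad, and aim to deduce that the truncated tuple $(a_1,\ldots,a_{n-1})$ is itself bad. This contradicts the hypothesis that ${\bf g}$ is standard, since for a standard tuple $M=[x_1,\ldots,x_n]\notin T_{\mathbb{Z}}(U_1)$, which forces $(a_1,\ldots,a_{n-1})$ to be good.

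First I would reduce to the situation in which $a_i$, $a_j$, $a_k$, $a_l$ are not multiples of $p$. This is the erasing argument of Lemma~\ref{eqmonosem0}: since $(a_1,\ldots,a_{n-1})$ is good, its residues modulo $p$ control goodness, and we may discard the entries that vanish in $\mathbb{Z}_p$ while keeping the four relevant ones. I would then split the analysis according to whether $\{i,j\}\cap\{k,l\}=\emptyset$ or $\{i,j\}\cap\{k,l\}\neq\emptyset$, exactly as in Lemma~\ref{l3}, taking $j=k$ in the overlapping case.

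In the disjoint case, applying Proposition~\ref{eqgood} to the bad sequence $\Delta_{i,j}({\bf g})$ --- whose entries consist of $a_i+a_j$ together with $a_k$, $a_l$ and the remaining $a_m$ --- produces an integer $g$ with $1\le g\le p-1$ such that all surviving residues are multiples of $\bar g$ and obey the block pattern of condition (iii). Combining this with the analogous constraints coming from $\Delta_{k,l}({\bf g})$ being bad forces $\bar a_i$, $\bar a_j$, $\bar a_k$, $\bar a_l$ into one of the five configurations of the proof of Lemma~\ref{l3}, now read modulo $p$ with $-g$ replaced by $p-g$. In each configuration I would verify that the untouched tuple $(a_1,\ldots,a_{n-1})$ again satisfies conditions (i)--(iii) of Proposition~\ref{eqgood} and is therefore bad. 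The mechanism is that fusing $a_i$ and $a_j$ into $a_i+a_j$ merely cancels one multiple of $p-g$ against a block of copies of $g$, so the admissible block structure of $\Delta_{i,j}({\bf g})$ pulls back to one on $(a_1,\ldots,a_{n-1})$. The overlapping case $j=k$ is treated identically using the five corresponding configurations.

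The main obstacle I anticipate is the bookkeeping in condition (iii) of Proposition~\ref{eqgood}. Over $\mathbb{Z}$ the complementary element is $-g$ and the multiplicities $\lambda_1\le\cdots\le\lambda_l$ are unrestricted, whereas modulo $p$ the complementary residue is $p-g$, the multiplicities must lie in the range $1\le\lambda_1\le\cdots\le\lambda_l\le p-1$, and one needs $|\overline{\mathcal{C}({\bf g})}|_{\bar g}\ge\lambda_1+\cdots+\lambda_l+2$. Checking that the fusion $a_i+a_j\pmod{p}$ respects these bounds --- in particular that the surviving number of copies of $g$ stays above the threshold and that no multiplicity is pushed past $p-1$ --- is the delicate point, and it has to be carried out for each of the ten configurations. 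The arithmetic is elementary but genuinely modular, since a sum such as $\lambda_r(p-g)+g$ only re-enters the admissible range after reduction modulo $p$.
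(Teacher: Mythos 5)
Your proposal is correct and matches the paper's approach exactly: the paper's own proof of this lemma consists of the single remark that it is similar to the proof of Lemma~\ref{l3}, and your outline (contradiction with the goodness of $(a_1,\ldots,a_{n-1})$ forced by ${\bf g}$ being standard, reduction via Lemma~\ref{eqmonosem0} to entries not divisible by $p$, the disjoint/overlapping case split, and the configurations read off from Proposition~\ref{eqgood} with $-g$ replaced by $p-g$) is precisely that adaptation, carried out in more detail than the paper itself provides.
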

\begin{proof}
The proof is similar to that of Lemma \ref{l3} and hence we omit it.
\end{proof}

By using technique similar to those of Theorem \ref{mainresult1} we obtain the proof of Theorem \ref{mainresult2}.

We this section by observing that the above results are easily adaptable to the case of the Lie algebra $W_1$. We recall that in \cite{FKK} the authors considered the base field of characteristic 0, and this was important in their proofs. Here we extend their main theorem to $W_1$ considered over an infinite field of characteristic $p\ne 2$.

\begin{theorem}
	Let $K$ be an infinite field of characteristic $p>2$. The $\mathbb{Z}$-graded identities
\[
x^c\equiv 0, \qquad [x_1^a,x_2^b] \equiv 0,
\]
	where $c<-2$, $(b-a)$ is a multiple of $p$, and
\[
\alpha[x_1^a,x_2^b, x_3^c]-\beta 	[x_1^a,x_3^c, x_2^b]\equiv 0,
\]
	where $\alpha=(c-a)(b-c-a)$, $\beta=(b-a)(c-b-a)$, form a basis for the $\mathbb{Z}$-graded identities of the Lie algebra $W_1$ over $K$.
\end{theorem}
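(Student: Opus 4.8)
The plan is to deduce this statement from the characteristic $p$ result for $U_1$ (Theorem~\ref{mainresult2}) by exploiting the fact that $W_1$ sits inside $U_1$ as a graded subalgebra. Since $\dim(W_1)_i\le 1$ for every $i$, Theorem~\ref{identimult} applies to $W_1$ and reduces everything to multilinear graded identities, so it suffices to prove that $P_n^{\bf g}\cap T_{\mathbb{Z}}(W_1)$ is contained in the $T_{\mathbb{Z}}$-ideal $I_W$ generated by the three displayed families, for every $n$ and every ${\bf g}\in\mathbb{Z}^n$. First I would record that $W_1=\bigoplus_{i\ge -1}Ke_i$ is closed under the bracket \eqref{multiwitt} (if $i,j\ge -1$ and $[e_i,e_j]\ne 0$, then $i+j\ge -1$), hence $W_1$ is a graded subalgebra of $U_1$ with $\mathrm{supp}\,W_1=\{i\mid i\ge -1\}$. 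Consequently $T_{\mathbb{Z}}(U_1)\subseteq T_{\mathbb{Z}}(W_1)$, so the identities \eqref{C1identp} and \eqref{C2identp} hold for $W_1$, while the support identities \eqref{basis1} hold because $(W_1)_c=0$ for $c\notin\mathrm{supp}\,W_1$. This yields the easy inclusion $I_W\subseteq T_{\mathbb{Z}}(W_1)$.

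The heart of the argument, and the step I expect to be the main obstacle, is the following bridge between $W_1$ and $U_1$. I claim that for a fixed left-normed multilinear monomial $M=[x_1^{g_1},\dots,x_n^{g_n}]$ all of whose degrees satisfy $g_i\ge -1$, one has $M\in T_{\mathbb{Z}}(W_1)$ if and only if $M\in T_{\mathbb{Z}}(U_1)$. One implication is immediate since $W_1\subseteq U_1$. For the converse the key point is that the partial sums $s_k=g_1+\cdots+g_k$ stay in $\mathrm{supp}\,W_1$ as long as no bracketing coefficient vanishes: if $s_{k-1}\ge -1$, $g_k\ge -1$ and $g_k\ne s_{k-1}$, then $s_k=s_{k-1}+g_k\ge -2$, and equality $s_k=-2$ would force $s_{k-1}=g_k=-1$, making the coefficient $g_k-s_{k-1}=0$, a contradiction; hence $s_k\ge -1$ throughout. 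Therefore an evaluation of $M$ on the $e_{g_i}$ that is nonzero in $U_1$ passes only through intermediate elements $e_{s_k}\in W_1$ and produces the same nonzero value in $W_1$. Extending this monomial by monomial, I would conclude that for every ${\bf g}$ with all entries in $\mathrm{supp}\,W_1$ the evaluation maps of $W_1$ and $U_1$ agree on $P_n^{\bf g}$, whence $P_n^{\bf g}\cap T_{\mathbb{Z}}(W_1)=P_n^{\bf g}\cap T_{\mathbb{Z}}(U_1)$.

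With the bridge in hand the proof splits according to ${\bf g}$. If some entry $g_i\notin\mathrm{supp}\,W_1$, then every monomial of $P_n^{\bf g}$ is multilinear and contains the variable $x_i^{g_i}$, which is a generator of $I_W$ by \eqref{basis1}; the standard fact that a multilinear commutator involving a variable lying in a $T_{\mathbb{Z}}$-ideal again lies in that ideal (argued by moving the variable to the end via the Jacobi identity, exactly as in Proposition~\ref{monident}) gives $P_n^{\bf g}\subseteq I_W$. If instead all entries of ${\bf g}$ lie in $\mathrm{supp}\,W_1$, the bridge gives $P_n^{\bf g}\cap T_{\mathbb{Z}}(W_1)=P_n^{\bf g}\cap T_{\mathbb{Z}}(U_1)$, and by Theorem~\ref{mainresult2} the right-hand side is contained in $T_{\mathbb{Z}}(U_1)$, which equals the $T_{\mathbb{Z}}$-ideal of $U_1$ generated by \eqref{C1identp} and \eqref{C2identp} and hence lies in $I_W$. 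In both cases $P_n^{\bf g}\cap T_{\mathbb{Z}}(W_1)\subseteq I_W$, completing the proof.

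A final remark on why no re-run of the reduction lemmas of Section~4 is required: the bridge shows that the good/bad dichotomy and all the $\sim$-equivalences for in-support sequences are inherited verbatim from $U_1$, so one may invoke Theorem~\ref{mainresult2} as a black box instead of re-proving analogues of Lemmas~\ref{eqdegree4} and \ref{eql3}. The only subtlety to watch is that merging two admissible degrees $g_i,g_j\ge -1$ into $g_i+g_j$ (as in $\Delta_{i,j}$) might leave $\mathrm{supp}\,W_1$; but a reducible pair contributing a nonzero monomial has $[x_i^{g_i},x_j^{g_j}]\ne 0$, so by \eqref{C1identp} $g_i\not\equiv g_j\pmod p$, whence $g_i,g_j$ are not both equal to $-1$ and $g_i+g_j\ge -1$. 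This is consistent with the observation already made in the characteristic $0$ case after Theorem~\ref{mainresult1}, namely that condition (iii) of Proposition~\ref{eqgood} collapses to $g=1$ with each $\lambda_i=1$ for the natural grading of $W_1$.
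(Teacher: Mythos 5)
Your proposal is correct, and it takes a genuinely different route from the paper's. The paper does not deduce the $W_1$ theorem from Theorem~\ref{mainresult2}: it asserts that the results of the characteristic-$p$ section are ``easily adaptable'' to $W_1$, i.e.\ one re-runs the whole machinery (monomial identities, the characterization of bad sequences, the degree-$4$ lemma, the reducibility lemmas) for $W_1$ directly, with the simplification, already noted after Theorem~\ref{mainresult1}, that in condition (iii) of Proposition~\ref{eqgood} one gets $g=1$ and all $\lambda_i=1$. You instead exploit that $W_1$ is a graded subalgebra of $U_1$ and prove a bridge: for a tuple ${\bf g}$ all of whose entries lie in $\mathrm{supp}\,W_1$, one has $P_n^{\bf g}\cap T_{\mathbb{Z}}(W_1)=P_n^{\bf g}\cap T_{\mathbb{Z}}(U_1)$. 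This is sound, and in fact simpler than your partial-sum argument suggests: since both algebras have homogeneous components of dimension $\le 1$ and $f\in P_n^{\bf g}$ is multilinear, $f$ is a graded identity of either algebra if and only if $f(e_{g_1},\ldots,e_{g_n})=0$, and this evaluation is literally the same element whether computed in $W_1$ or in $U_1$, because $W_1$ is a subalgebra containing all the $e_{g_i}$; the verification that the partial sums stay $\ge -1$ is a consistency check you do not actually need. Together with Theorem~\ref{identimult} (reduction to multilinear identities, valid for $W_1$ since $\dim (W_1)_i\le 1$), the out-of-support case, and the observation that the $T_{\mathbb{Z}}$-ideal generated by \eqref{C1identp} and \eqref{C2identp} is contained in $I_W$, your two-case argument closes. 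The trade-off: the paper's route is self-contained and yields the explicit combinatorial description of bad sequences for $W_1$; yours is shorter and modular, invoking Theorem~\ref{mainresult2} as a black box and explaining structurally why $U_1$ and $W_1$ share exactly the same commutator identities, differing only in the support identities.

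One discrepancy you should flag explicitly. As literally stated, the theorem lists $x^c\equiv 0$ only for $c<-2$, but $\mathrm{supp}\,W_1=\{i\in\mathbb{Z}\mid i\ge -1\}$, so $x^{-2}$ is also a graded identity of $W_1$, and it does not follow from the displayed polynomials: every word-length-one element of the $T_{\mathbb{Z}}$-ideal they generate has $\mathbb{Z}$-degree at most $-3$. Your proof silently (and correctly) uses the generators $x^{g}$ for every $g\notin\mathrm{supp}\,W_1$, via \eqref{basis1}; that is, you prove the statement with the bound $c\le -2$, which is the intended one (cf.~\cite{FKK}), rather than the literal one, and this should be said rather than citing \eqref{basis1} as if it coincided with the displayed family.
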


\section{Independence of graded identities}
In this section we adapt some ideas from \cite{FKK}. Here $K$ stands for an infinite field of characteristic different from 2.

Denote by $f_{r,s}=[x^r_1, x^s_2] \in L\langle X_G\rangle$
the graded polynomials from \eqref{C1identp}. We impose the obvious restriction $r\le s$. 

\begin{lemma}
Suppose $r$, $s\in\mathbb{Z}$, $r\le s$, are such that $p$ divides $r-s$.
 The graded identity $f_{r,s}$ is not a consequence of the identities \eqref{C2identp} and the identities $f_{u,v}$ 
 where $(r,s)\neq (u,v)$.
\end{lemma}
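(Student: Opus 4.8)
The plan is to prove independence by the standard model-algebra method: I would exhibit a single $\mathbb{Z}$-graded Lie algebra $A$ that satisfies every identity in the set
\[
S=\{\eqref{C2identp}\}\cup\{f_{u,v}\mid (u,v)\neq (r,s)\}
\]
but that does \emph{not} satisfy $f_{r,s}$. This suffices: $T_\mathbb{Z}(A)$ is a $T_\mathbb{Z}$-ideal containing $S$, so $\langle S\rangle_\mathbb{Z}\subseteq T_\mathbb{Z}(A)$; since $f_{r,s}\notin T_\mathbb{Z}(A)$ we conclude $f_{r,s}\notin\langle S\rangle_\mathbb{Z}$, i.e. $f_{r,s}$ is not a consequence of $S$.

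First I would construct $A$ of Heisenberg type. Let $A$ be the three-dimensional $\mathbb{Z}$-graded Lie algebra with homogeneous basis $u$, $w$, $z$ of degrees $\|u\|=r$, $\|w\|=s$, $\|z\|=r+s$, whose only nonzero product of basis elements is
\[
[u,w]=z=-[w,u],
\]
all remaining brackets $[u,z]$, $[w,z]$, $[z,z]$, $[u,u]$, $[w,w]$ being zero. Then $z$ is central and $[A,A]=Kz$, so $A$ is two-step nilpotent, i.e. $[[A,A],A]=0$. The Jacobi identity is automatic for a two-step nilpotent algebra (each Jacobian summand lies in $[[A,A],A]=0$), so $A$ is a Lie algebra, and the decomposition $A=\bigoplus_{i\in\mathbb{Z}}A_i$ with $u\in A_r$, $w\in A_s$, $z\in A_{r+s}$ respects the bracket, hence is a $\mathbb{Z}$-grading. (If $r=s$ one simply has $\dim A_r=2$, and if $s=0$ then $\|z\|=r=\|u\|$; nothing below is affected.)

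It remains to check that $A$ separates $f_{r,s}$ from $S$. The admissible substitution $x_1^r\mapsto u$, $x_2^s\mapsto w$ gives $f_{r,s}(u,w)=[u,w]=z\neq 0$, so $f_{r,s}\notin T_\mathbb{Z}(A)$. Since $A$ is two-step nilpotent, every left-normed commutator of length $\geq 3$ vanishes on $A$; in particular both triple commutators appearing in \eqref{C2identp} are identically zero, whence \eqref{C2identp}$\in T_\mathbb{Z}(A)$ regardless of the scalars $\alpha$, $\beta$. Finally, for $(u_0,v_0)$ with $u_0\le v_0$ and $(u_0,v_0)\neq(r,s)$, the bilinear polynomial $f_{u_0,v_0}=[x_1^{u_0},x_2^{v_0}]$ is a graded identity of $A$ if and only if $[A_{u_0},A_{v_0}]=0$. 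But the unique nonzero bracket of basis elements is $[u,w]=z$ and $z$ is central, so $[A_{u_0},A_{v_0}]\neq 0$ forces one factor of degree $r$ and the other of degree $s$; together with $u_0\le v_0$ and $r\le s$ this gives $(u_0,v_0)=(r,s)$, which is excluded. Hence every $f_{u_0,v_0}\in S$ lies in $T_\mathbb{Z}(A)$, and the separation is established.

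The argument is short once $A$ is chosen, and two-step nilpotency disposes of \eqref{C2identp} effortlessly. The one point I would treat as the main obstacle is the verification that no $f_{u,v}\in S$ accidentally fails on $A$ when degrees collide — for instance when $s=0$ (so $\|z\|=\|u\|=r$) or $r=s$ (so $\dim A_r=2$). In each such case the centrality of $z$, combined with the fact that $[u,w]=z$ is the only nonzero basic bracket, guarantees that the sole pair of degrees yielding a nonzero homogeneous bracket is exactly $(r,s)$, so the separating property is preserved.
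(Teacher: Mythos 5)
Your proof is correct and is essentially the paper's own argument: your Heisenberg-type algebra $A$ with $[u,w]=z$ central is exactly the algebra $H=UT(3,K)$ of strictly upper triangular $3\times 3$ matrices used in the paper (via $u=E_{12}$, $w=E_{23}$, $z=E_{13}$), equipped with the same grading, including the same handling of the degenerate cases $r=s$ and $r=s=0$. The only cosmetic difference is that you invoke two-step nilpotency explicitly to dispose of \eqref{C2identp}, which the paper leaves as an easy verification.
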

\begin{proof}
The example given in \cite[Lemma 11]{FKK} also holds for our case. Let $r$, $s\in\mathbb{Z}$ be such that $r-s$ is a multiple of $p$, 
and let $H=UT(3, K)$ be the Lie algebra of strictly upper triangular $3\times 3$ matrices over $K$. Define the vector subspaces $H_k$ ($k\in\mathbb{Z}$) in $H$ as follows: 
\begin{enumerate}
	\item if $r\neq s$ we consider $H_k=0$ for all $k\neq r$, $s$ and $r+s$; $H_r$ is the span of $E_{12}$, $H_s$ the span of $E_{23}$ and $H_{r+s}$ the span of $E_{13}$;
	\item if $r=s=0$ then $H_0=H$ and $H_k=0$ for all $k\neq 0$; and,
	\item if $r=s\neq 0$ then $H_r$ is the linear span of $E_{12}$ and $E_{23}$, $H_{2r}$ is spanned by $E_{13}$ and $H_k=0$ for all $k\ne r$, $2r$.
\end{enumerate}
Here $E_{ij}$ is the matrix that has $1$ at the position $(i,j)$ and 0 elsewhere. It is clear that $H=\oplus_{i\in\mathbb{Z}}H_i$ is a $\mathbb{Z}$-graded Lie algebra.  Since $[E_{12},E_{23}]=E_{13}\neq0$, the graded identity $[x^r_1, x^s_2]$ is not satisfied in $H$. On the other hand, one can easily see that $H$ satisfies all graded identities \eqref{C2identp} as well as all identities $f_{u,v}$ 
when $(r,s)\neq (u,v)$. The result follows.
\end{proof}

Recall that a set $I$ of (graded) polynomials is called an independent set of (graded) identities if neither of them lies in the ideal of (graded) identities generated by the remaining ones. 

\begin{corollary}
The set of polynomials $\{f_{r,s}\mid r,s\in\mathbb{Z}, \quad r\le s, \quad r\equiv s\pmod{p}\}$ is an independent set of graded identities in $L\langle X_G\rangle$.
\end{corollary}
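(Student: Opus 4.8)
The plan is to deduce the statement immediately from the preceding Lemma, the only real point being to unwind the definition of independence. By definition, the set $\{f_{r,s}\mid r\le s,\ r\equiv s\pmod p\}$ is independent if and only if, for every admissible pair $(r,s)$ (that is, $r\le s$ with $p\mid r-s$), the polynomial $f_{r,s}$ does not belong to the $T_G$-ideal generated by the remaining members of the set. So I would fix one such pair $(r,s)$ and show that $f_{r,s}$ avoids that ideal; since $(r,s)$ is arbitrary, this gives the independence of the whole family.

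The key step is a trivial monotonicity of $T_G$-ideals. Write $\mathcal{J}_0=\langle\{f_{u,v}\mid (u,v)\neq (r,s)\}\rangle_G$ for the $T_G$-ideal occurring in the definition of independence, and let $\mathcal{J}$ be the $T_G$-ideal generated by the identities \eqref{C2identp} together with all $f_{u,v}$ with $(u,v)\neq (r,s)$. Since the generating set of $\mathcal{J}_0$ is contained in that of $\mathcal{J}$, we have the inclusion $\mathcal{J}_0\subseteq \mathcal{J}$.

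Now the preceding Lemma says precisely that $f_{r,s}\notin\mathcal{J}$: it exhibits the $\mathbb{Z}$-graded Lie algebra $H=UT(3,K)$, with the grading prescribed there according to whether $r\neq s$, $r=s=0$, or $r=s\neq 0$, which satisfies all the identities \eqref{C2identp} and all $f_{u,v}$ with $(u,v)\neq(r,s)$, yet does not satisfy $f_{r,s}=[x_1^r,x_2^s]$ because $[E_{12},E_{23}]=E_{13}\neq 0$. Combining this with the inclusion $\mathcal{J}_0\subseteq\mathcal{J}$ forces $f_{r,s}\notin\mathcal{J}_0$, which is exactly the required independence condition for the pair $(r,s)$.

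There is essentially no obstacle here: all the genuine work, namely the construction of the separating graded algebra $H$, has already been carried out in the Lemma, and the corollary only requires the observation that deleting the generators \eqref{C2identp} from the ideal can only make it smaller. The one thing I would be careful about is matching the quantifiers: the Lemma treats a single fixed pair, so I must apply it once for each $(r,s)$ in the set, checking that every such pair indeed meets the Lemma's hypotheses $r\le s$ and $p\mid r-s$, which holds by the very description of the set.
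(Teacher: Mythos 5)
Your proposal is correct and matches the paper's (implicit) argument: the paper states this corollary with no separate proof, treating it exactly as you do — as an immediate consequence of the preceding Lemma applied to each pair $(r,s)$, since dropping the generators \eqref{C2identp} can only shrink the $T_G$-ideal. Your explicit spelling-out of the monotonicity step $\mathcal{J}_0\subseteq\mathcal{J}$ and of the quantifier over pairs is exactly what the paper leaves to the reader.
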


Now let us denote $f_{abc}=\alpha[x^a_1, x^b_2, x^c_3]-\beta[x^a_1, x^c_3, x^b_2]$
the graded identities defined in \eqref{C2identp}. Recall that $\alpha=(c-a)(b-c-a)$, $\beta=(b-a)(c-b-a)$ and that the symbol $\bar{a}$ means the residue of the division of $a$ by $p$, $0\le \bar{a}\le p-1$. If $\bar{a}=\bar{c}$ then $ \alpha=0$ but the second commutator of $f_{abc}$ vanishes due to the graded identity \eqref{C1identp}, and analogously for $\bar{b}=\bar{a}$. 
So we suppose that $\bar{b}\neq\bar{a}$ and $\bar{a}\neq\bar{c}$. Furthermore, if $\bar{b}=\overline{(a +c)}$ then $\alpha=0$ but the second commutator in $f_{abc}$ vanishes too, due to \eqref{C1identp}.  So we may suppose $\bar{b}\neq\overline{(a +c)}$ and analogously $\bar{c}=\overline{(a+b)}$. If $\bar{c}=\bar{b}$ then 
$f_{abb}$ lies in the $T_\mathbb{Z}$-ideal generated by \eqref{C1identp} since
\begin{align*}
f_{abc}&=\overline{(c-a)(b-c-a)}[x_1^a,x_2^b, x_3^c]-\overline{(b-a)(c-b-a)}[x_1^a,x_3^c, x_2^b]\\
&=-\bar{a}(\bar{b}-\bar{a})[x_1^a,x_2^b, x_3^c]+\bar{a}(\bar{b}-\bar{a})[x_1^a,x_3^c, x_2^b]\\
&=\bar{a}(\bar{b}-\bar{a})[x_1^a,[x_3^c, x_2^b]].
\end{align*}
Thus, in addition we suppose $\bar{b}\neq \bar{c}$. Similarly we treat the case $\bar{a}=\overline{b+c}$. Therefore we suppose $\bar{a}\neq\overline{b+c}$ as well.

As it was done in \cite[Section 5]{FKK}, we shall impose further restrictions on the graded identities of the type \eqref{C2identp}.

\begin{lemma}
	For all $a$, $b$, $c\in \mathbb{Z}$,
\[
f_{acb}(x_1^a,x_3^c, x_2^b)=f_{bac}(x_2^b,x_1^a, x_3^c)=-f_{abc}(x_1^a,x_2^b, x_3^c).
\]
\end{lemma}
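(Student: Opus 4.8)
The plan is to treat the assertion as a purely formal identity in the free graded Lie algebra $L\langle X_\mathbb{Z}\rangle$, valid in every characteristic, and to verify it by a short direct computation. It is convenient to regard $f$ as a function of three homogeneous arguments $y_1$, $y_2$, $y_3$ of degrees $d_1$, $d_2$, $d_3$, namely
\[
f_{d_1 d_2 d_3}(y_1, y_2, y_3) = (d_3 - d_1)(d_2 - d_3 - d_1)\,[y_1, y_2, y_3] - (d_2 - d_1)(d_3 - d_2 - d_1)\,[y_1, y_3, y_2],
\]
so that $f_{abc}(x_1^a, x_2^b, x_3^c)$ recovers the polynomial $f_{abc}$ with the stated $\alpha$ and $\beta$. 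The three expressions in the statement are then obtained by prescribed substitutions into this single template, and the proof amounts to reducing each of them to the basis $\{[x_1^a, x_2^b, x_3^c],\,[x_1^a, x_3^c, x_2^b]\}$ of the relevant multilinear space and comparing coefficients.

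For the first equality I would substitute $(d_1, d_2, d_3) = (a, c, b)$ and $(y_1, y_2, y_3) = (x_1^a, x_3^c, x_2^b)$. The two coefficients then become $(b - a)(c - b - a) = \beta$ and $(c - a)(b - c - a) = \alpha$, whence
\[
f_{acb}(x_1^a, x_3^c, x_2^b) = \beta\,[x_1^a, x_3^c, x_2^b] - \alpha\,[x_1^a, x_2^b, x_3^c] = -f_{abc}(x_1^a, x_2^b, x_3^c).
\]
This is just the antisymmetry of $f$ under interchanging its second and third slots together with their degrees, and it uses no Lie relation beyond the definition.

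The second equality is the substantive one. Substituting $(d_1, d_2, d_3) = (b, a, c)$ and $(y_1, y_2, y_3) = (x_2^b, x_1^a, x_3^c)$ gives
\[
f_{bac}(x_2^b, x_1^a, x_3^c) = (c - b)(a - c - b)\,[x_2^b, x_1^a, x_3^c] - (a - b)(c - a - b)\,[x_2^b, x_3^c, x_1^a].
\]
I would reduce the first monomial by the skew-symmetry $[x_2^b, x_1^a, x_3^c] = -[x_1^a, x_2^b, x_3^c]$ and the second by the Jacobi identity, which yields $[x_2^b, x_3^c, x_1^a] = [x_1^a, x_3^c, x_2^b] - [x_1^a, x_2^b, x_3^c]$. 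Collecting terms, the coefficient of $[x_1^a, x_3^c, x_2^b]$ is $-(a-b)(c-a-b)$, which equals the coefficient $(b-a)(c-b-a)$ in $-f_{abc}$ since $-(a-b) = b-a$ and $c-a-b = c-b-a$; the coefficient of $[x_1^a, x_2^b, x_3^c]$ is $-(c-b)(a-c-b) + (a-b)(c-a-b)$, and a short expansion shows this equals $-(c-a)(b-c-a)$, both sides reducing to $c^2 - a^2 + ab - bc$.

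The whole lemma is thus a formal consequence of the bilinearity of the bracket and the Jacobi identity, requiring no hypothesis on the characteristic, which is precisely why it can be invoked uniformly in both the characteristic-zero and the positive-characteristic treatments. The only step that demands any care — and the closest thing to an obstacle — is the correct application of the Jacobi identity to rewrite $[x_2^b, x_3^c, x_1^a]$ and the ensuing check that the two quadratic coefficient polynomials in $a$, $b$, $c$ coincide; the remaining manipulations are routine sign bookkeeping.
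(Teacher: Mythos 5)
Your proof is correct: the reduction of $[x_2^b,x_3^c,x_1^a]$ via the Jacobi identity is exactly right, and the coefficient check works out (both quadratics reduce to $c^2-a^2+ab-bc$, and $-(a-b)(c-a-b)=(b-a)(c-b-a)=\beta$). The paper gives no computation of its own—it simply cites \cite[Lemma 16]{FKK}, whose proof is the same direct verification by antisymmetry and Jacobi—so your argument follows the intended route while having the merit of being self-contained.
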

\begin{proof}
The proof repeats that of \cite[Lemma 16]{FKK}.
\end{proof}

\begin{corollary}
		For all $a$, $b$, $c\in \mathbb{Z}$ one has 
\[
f_{bca}(x_2^b,x_3^c, x_1^a)=f_{cab}(x_3^c,x_1^a, x_2^b)=f_{abc}(x_1^a,x_2^b, x_3^c)
\]
			and
			\[
			f_{cba}(x_3^c, x_2^b,x_1^a)=-f_{abc}(x_1^a,x_2^b, x_3^c).
			\]
\end{corollary}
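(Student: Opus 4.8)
The plan is to derive the whole corollary formally from the two relations recorded in the preceding lemma, which I read as the following two universally valid identities: for arbitrary integers $P$, $Q$, $R$ and homogeneous variables $u$, $v$, $w$ of degrees $P$, $Q$, $R$, transposing the last two slots gives $f_{PRQ}(u,w,v)=-f_{PQR}(u,v,w)$, and transposing the first two slots gives $f_{QPR}(v,u,w)=-f_{PQR}(u,v,w)$. The first of these is immediate from the way $f$ is written: exchanging $Q\leftrightarrow R$ and $v\leftrightarrow w$ merely interchanges the two coefficients $\alpha$ and $\beta$, so it costs an overall sign. The second is the genuine content of the lemma, resting on the Jacobi identity; I would take both as given. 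The conceptual point is that the two elementary transpositions $(1\,2)$ and $(2\,3)$ generate the symmetric group acting on the three slots, and since each multiplies $f$ by $-1$, the expression $f$ transforms under this action by the sign representation. The corollary merely records the values of $f$ at the remaining four permutations.

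For the two cyclic rearrangements I would chain the two transpositions. Applying the first-two-slots relation to $f_{abc}(x_1^a,x_2^b,x_3^c)$ produces $f_{bac}(x_2^b,x_1^a,x_3^c)=-f_{abc}(x_1^a,x_2^b,x_3^c)$, and then applying the last-two-slots relation to that expression yields $f_{bca}(x_2^b,x_3^c,x_1^a)=-f_{bac}(x_2^b,x_1^a,x_3^c)$; composing the two factors of $-1$ gives the first claimed equality $f_{bca}(x_2^b,x_3^c,x_1^a)=f_{abc}(x_1^a,x_2^b,x_3^c)$. The equality for $f_{cab}$ is obtained symmetrically, first swapping the last two slots of $f_{abc}$ and then the first two, which again contributes $(-1)^2=1$.

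Finally, for the reversal I would apply the first-two-slots relation once more, now to $f_{bca}(x_2^b,x_3^c,x_1^a)$, obtaining $f_{cba}(x_3^c,x_2^b,x_1^a)=-f_{bca}(x_2^b,x_3^c,x_1^a)=-f_{abc}(x_1^a,x_2^b,x_3^c)$. This is exactly the expected sign, since the reversal is an odd permutation realized from the even one by one extra transposition.

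There is no real mathematical obstacle here: the entire argument is the composition of two already established sign-changing symmetries, so the parity of the permutation dictates the sign. The only point demanding care is the bookkeeping, namely tracking which degree and which variable occupy each slot after each transposition and checking that every invocation of the lemma is made with an arbitrary (hence admissible) degree-triple. In particular one must verify at each step that the subscript pattern of $f$ and the ordered list of substituted variables are updated consistently, so that, for instance, the last-two-slots relation applied to $f_{bac}(x_2^b,x_1^a,x_3^c)$ indeed returns $f_{bca}(x_2^b,x_3^c,x_1^a)$ and not some other arrangement.
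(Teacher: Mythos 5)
Your proposal is correct and takes essentially the same route as the paper: the paper states this corollary without further argument as an immediate consequence of the preceding lemma, and the intended reasoning is precisely yours — compose the two sign-reversing transposition relations $f_{acb}(x_1^a,x_3^c,x_2^b)=-f_{abc}(x_1^a,x_2^b,x_3^c)$ and $f_{bac}(x_2^b,x_1^a,x_3^c)=-f_{abc}(x_1^a,x_2^b,x_3^c)$, so that $f$ transforms under the sign representation of $S_3$, the two $3$-cycles (even permutations) giving $+f_{abc}$ and the reversal (odd) giving $-f_{abc}$. Your bookkeeping of slots and degrees at each application of the lemma is accurate, so there is nothing to correct.
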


\begin{lemma}
	Let $K$ be an infinite field of characteristic $p>2$. Let $a$, $b$, $c\in\mathbb{Z}$ where  $a>b>c$, and $\bar{a}\neq\bar{b}$, $\bar{a}\neq \bar{c}$, $\bar{a}\neq \overline{(b+c)}$, $\bar{b}\neq \overline{(a+c)}$. Then the graded identity $f_{abc}$ is not a consequence of all identities \eqref{C1identp} and all remaining identities $f_{a^\prime b^\prime c^\prime}$ where $a^\prime>b^\prime>c^\prime$ and $(a^\prime, b^\prime, c^\prime)\neq(a,b,c)$.	
\end{lemma}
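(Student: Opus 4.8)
The plan is to argue directly with the relatively free object rather than building a separate model algebra. Write $J$ for the $T_\mathbb{Z}$-ideal generated by the identities \eqref{C1identp} together with all $f_{a'b'c'}$ for which $a'>b'>c'$ and $(a',b',c')\neq(a,b,c)$; the assertion is exactly that $f_{abc}\notin J$. Since $K$ is infinite, every $T_\mathbb{Z}$-ideal is homogeneous with respect to the multidegree in the individual variables (it is stable under the rescalings $x_i^g\mapsto\lambda_i x_i^g$), so $J$ is the direct sum of its multihomogeneous components. As $f_{abc}$ is multilinear in $x_1^a,x_2^b,x_3^c$, we have $f_{abc}\in J$ if and only if $f_{abc}\in J\cap P_3^{(a,b,c)}$. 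First I would record that $f_{abc}$ is a nonzero element of $P_3^{(a,b,c)}$: by the non-degeneracy conditions in force throughout the section (the residues $\bar a,\bar b,\bar c$ are pairwise distinct and none of $\overline{a+b},\overline{a+c},\overline{b+c}$ equals the complementary residue) the coefficient $\alpha=(c-a)(b-c-a)$ is nonzero modulo $p$, while $[x_1,x_2,x_3]$ and $[x_1,x_3,x_2]$ are linearly independent in the free Lie algebra.

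The heart of the proof is then to show that $J\cap P_3^{(a,b,c)}=0$. I would use the standard description of the ideal generated by a set $R$ in a Lie algebra as the linear span of the left-normed commutators $[r,l_1,\dots,l_k]$ with $r\in R$ and $l_1,\dots,l_k$ in the algebra; here $R$ consists of all graded-endomorphic images of the generators of $J$. After passing to a multihomogeneous component one may assume $r$ and the $l_i$ are multihomogeneous, so the multidegree of $[r,l_1,\dots,l_k]$ is the sum of the multidegrees of its constituents. For this element to be multilinear of total length $3$ in the variables $x_1^a,x_2^b,x_3^c$, a length count leaves only two possibilities: either $r$ is the image of a generator of type \eqref{C2identp} (length $3$) and $k=0$, or $r$ is the image of a generator of type \eqref{C1identp} (length $2$) and $k\le 1$.

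In the first case, length $3$ forces each of the three variables of $f_{a'b'c'}$ to be sent to a single free variable of equal $\mathbb{Z}$-degree, whence $(a',b',c')$ is the decreasing rearrangement $(a,b,c)$ of $\{a,b,c\}$ — which is excluded. In the second case $r=[x^u,x^v]$ with $p\mid u-v$, and either both variables map to single free variables and one bracketing is applied, producing a monomial $[x^i,x^j,x^m]$ with two of $\bar a,\bar b,\bar c$ equal (impossible by pairwise distinctness), or one variable maps to a length-two bracket, producing $[[x^i,x^j],x^m]$ and forcing one of $\overline{a+b}=\bar c$, $\overline{a+c}=\bar b$, $\overline{b+c}=\bar a$ (each excluded by hypothesis). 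Hence no spanning commutator survives in $P_3^{(a,b,c)}$, so $J\cap P_3^{(a,b,c)}=0$ and therefore $f_{abc}\notin J$, as required.

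The step I expect to be the main obstacle — and the one I would write out most carefully — is the justification that the multihomogeneous component $J\cap P_3^{(a,b,c)}$ is genuinely exhausted by the substitution instances above. This rests on two facts that must be invoked explicitly: the homogeneity of $T_\mathbb{Z}$-ideals over an infinite field, and the reduction of the ideal to left-normed commutators $[r,l_1,\dots,l_k]$. Once these are in place the remaining work is purely combinatorial bookkeeping of lengths and $\mathbb{Z}$-degrees, and the six non-degeneracy conditions do exactly the job of killing every potential contributor. Alternatively, in the spirit of the preceding lemma one could exhibit an explicit finite-dimensional $\mathbb{Z}$-graded Lie algebra satisfying \eqref{C1identp} and all the other $f_{a'b'c'}$ but not $f_{abc}$; the component argument above is cleaner, but such a model would give an independent confirmation.
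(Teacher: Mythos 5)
Your argument is correct, but it takes a genuinely different route from the paper's. The paper proves the lemma by exhibiting a concrete separating model: the strictly upper triangular matrices $L^{(a,b,c)}=UT(4,K)$, graded by placing $E_{12},E_{23},E_{34}$ in degrees $a,b,c$ (so that $E_{13},E_{24},E_{14}$ lie in degrees $a+b$, $b+c$, $a+b+c$), for which every identity in \eqref{C1identp} and every $f_{a'b'c'}$ with $(a',b',c')\neq(a,b,c)$ holds, while $f_{abc}$ evaluates to $\alpha E_{14}\neq 0$; the bulk of that proof is the case-by-case verification of these two claims. You instead work intrinsically in the free graded Lie algebra: multihomogeneity of $T_{\mathbb{Z}}$-ideals over an infinite field, together with the description of the Lie ideal generated by the graded-endomorphic images of the generators as the span of left-normed commutators $[r,l_1,\ldots,l_k]$, reduces everything to a length-and-degree count, which shows $J\cap P_3^{(a,b,c)}=0$; since $\alpha\not\equiv 0\pmod{p}$ forces $f_{abc}\neq 0$, the lemma follows. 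Two remarks are in order. First, your reduction to multihomogeneous $r$ and $l_i$ is legitimate precisely because the generators \eqref{C1identp} and \eqref{C2identp} are multilinear, so replacing each substituted element by its multihomogeneous components and expanding stays inside the spanning set; you rightly single this out as the step requiring explicit justification, and it does go through. Second, like the paper, you use all six standing non-degeneracy conditions of this section --- in particular $\bar b\neq\bar c$ and $\bar c\neq\overline{a+b}$, which are not among the four hypotheses displayed in the lemma but are fixed in the discussion preceding it; they are genuinely needed (if, say, $\bar b=\bar c$, then $f_{abc}$ already follows from \eqref{C1identp}, and the paper's model then fails to satisfy \eqref{C1identp}), so invoking them is consistent with the paper's intent. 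On balance, your approach yields a slightly stronger conclusion (the entire multilinear component $P_3^{(a,b,c)}$ meets $J$ trivially, not merely $f_{abc}\notin J$) and avoids constructing and verifying a model, whereas the paper's approach is more concrete and reuses the upper-triangular construction already employed for the independence of the identities $f_{r,s}$.
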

\begin{proof}
We will prove the lemma if we construct a $\mathbb{Z}$-graded Lie algebra $L^{(a,b,c)}$ such that\begin{enumerate}
	\item $L^{(a,b,c)}$ satisfies all identities \eqref{C1identp} and all identities $f_{a^\prime b^\prime c^\prime}$ where $a^\prime>b^\prime>c^\prime$, and $(a^\prime, b^\prime, c^\prime)\neq (a,b,c)$.
	\item $L^{(a,b,c)}$ does not satisfy the identity $f_{abc}$.
\end{enumerate}This yields the proof of the lemma. 

Let $L^{(a,b,c)}=UT(4,K)$ be the Lie algebra of strictly upper triangular $4\times 4$ matrices over $K$. Let $h_a=E_{12}$, $h_b=E_{23}$, $h_c=E_{34}$. Then $L^{(a,b,c)}$ is a nilpotent Lie algebra of class 3 and of dimension 6, with a $K$-basis consisting of $h_a$, $h_b$, $h_c$, $h^\prime_{a}=h_{a+b}=[h_a, h_b] =E_{13}$, $h_{b+c}=[h_b, h_c] =E_{24}$, $h^{\prime\prime}_{a}=h_{a+b+c}=[h_a, h_b, h_c] =E_{14}$. It follows  $f_{abc}$ is not a graded identity of $L^{(a,b,c)}$.

For each $i\in\mathbb{Z}$, let $H_i$ be the vector space spanned by the elements $h_i$, $h^\prime_i$, $h^{\prime\prime}_i$, if such elements exist, and $0$ otherwise. Note that $H_i=0$ if $i\notin\{a,b,c,a+b,b+c,a+b+c\}$.

{\bf Claim 1:} The graded Lie algebra $L^{(a,b,c)}$ satisfies the identities \eqref{C1identp}. In fact, if $r$ is not in $\{a, b, c, a+b,b+c,a+b+c\}$ then $H_r=0$, so
$L^{(a,b,c)}$ satisfies the identity $[x^r_1, x^s_2]$. If $r\in \{a, b, c, a+b,b+c,a+b+c\}$  then $L^{(a,b,c)}$ also satisfies the identity $[x^r_1, x^s_2]$ given in \eqref{C1identp}, because  $\bar{a}\neq\bar{b}$, $\bar{a}\neq \bar{c}$, $\bar{a}\neq \overline{(b+c)}$, $\bar{b}\neq \overline{(a+c)}$. This means $H_s=\{0\}$ or $s=r$ and $\dim H_r\leq 2$.
Also neither of the above is the case for any $r$ if $a=0$, $b=0$, $c=0$, $a+b=0$ or $b+c=0$. This implies we have to consider the following particular cases. 

Suppose that $r=a$. Since ${b}\neq{a}$, ${c}\neq {a}$, and ${b +c}\neq {a}$, we obtain $h_b$, $h_{b+c}\notin H_r$. It follows $H_r$ is contained in the span of $h_a$, $h_{a+b}$ and $h_{a+b+c}$. The latter elements commute thus the identity $[x^r_1, x^r_2]$ is satisfied in $L^{(a,b,c)}$ in this case.

Now suppose $r=b$. Similarly we get $h_a$, $h_c\notin H_r$ hence $H_r$ is contained in the span of the (commuting) elements $h_b$, $h_{b+c}$, $h_{a+b}$.                                    

Analogously, if $r =c$ then
$H_r$ is contained in the span of $h_c$, $h_{b+c}$ and $h_{a+b+c}$ that commute. Hence the identity $[x^{r}_{1}, x^{r}_{2}]$ is  satisfied in any one of these cases. 

Now assume $r\in\{a+b,b+c,a+b+c\}$. If $r=a+b$ then $H_r$ is contained in the abelian Lie subalgebra spanned by $h_{a+b}$, $h_{b+c}$ and $h_{a+b+c}$, and the graded identity \eqref{C1identp} is satisfied. The cases $r=b+c$ and $r=a+b+c$ are treated similarly.

Thus the algebra $L^{(a,b,c)}$ satisfies the identities \eqref{C1identp}.

Let $a^\prime$, $b^\prime$, $c^\prime\in\mathbb{Z}$ with $a^\prime>b^\prime>c^\prime$ and $(a^\prime, b^\prime, c^\prime)\neq(a,b,c)$. 

{\bf Claim 2:} $L^{(a,b,c)}$ satisfies the graded identities $[x^{a^\prime}_1,x^{b^\prime}_2, x^{c^\prime}_3]$ and $[x^{a^\prime}_1,x^{c^\prime}_3, x^{b^\prime}_2]$. Indeed, if $a^\prime\notin \{a, b, c, a+b,b+c,a+b+c\}$ then $H_{a^\prime}=0$, and the claim follows. In a similar way we treat the cases when $b^\prime$ or $c^\prime$ does not belong to the set $\{a, b, c, a+b,b+c,a+b+c\}$. 

Thus suppose $a^\prime$, $b^\prime$ and $c^\prime$ lie in $\{a, b, c, a+b,b+c,a+b+c\}$. As $(a^\prime, b^\prime, c^\prime)\neq(a,b,c)$, at least one of the elements $a^\prime, b^\prime, c^\prime$ does not belong to $\{a, b, c\}$. Suppose, without loss of generality, that $a^\prime\notin\{a, b, c\}$. This implies that $a^\prime \in \{a+b,b+c,a+b+c\}$, and hence, $H_i$ is contained in the vector space $V$ spanned by $E_{13}$, $E_{24}$ and $E_{14}$. It follows that $[H_{a^\prime},H_{b^\prime},H_{c^\prime}]\subseteq [V,H_{b^\prime},H_{c^\prime}]=0$, and similarly $[H_{a^\prime},H_{c^\prime},H_{b^\prime}]=0$. Claim 2 is proved.
\end{proof}

\begin{corollary}
	Over an infinite field of characteristic $p> 2$, the set of polynomials $\{f_{abc}\mid a>b>c\}$ where $a$, $b$ and $c$ satisfy 
\[
{a}\not\equiv {b}\mod p,~~{a}\not\equiv {c}\mod p, ~~{a}\not\equiv {(b+c)}\mod p, \quad {b}\not\equiv  (a+c)\pmod{p},
\]
	is an independent set of graded identities in $L\langle X_G\rangle$.
\end{corollary}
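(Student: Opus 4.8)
The plan is to obtain the corollary as an immediate consequence of the previous lemma, in which the entire construction has already been carried out. Recall that, by definition, a set of graded polynomials is \emph{independent} if none of its members belongs to the $T_{\mathbb{Z}}$-ideal generated by the remaining ones. Denote by $S$ the set in the statement, that is
\[
S=\{\,f_{abc}\mid a>b>c,\ \bar a\neq\bar b,\ \bar a\neq\bar c,\ \bar a\neq\overline{b+c},\ \bar b\neq\overline{a+c}\,\}.
\]
First I would fix an arbitrary element $f_{abc}\in S$ and aim to prove that $f_{abc}\notin\langle S\setminus\{f_{abc}\}\rangle_{\mathbb{Z}}$; since $f_{abc}$ is arbitrary, this establishes the independence of $S$.

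The key step is to compare the generating sets. Every member of $S$ is of the form $f_{a'b'c'}$ with $a'>b'>c'$, so removing $f_{abc}$ leaves a set all of whose elements are $f_{a'b'c'}$ with $a'>b'>c'$ and $(a',b',c')\neq(a,b,c)$; in particular
\[
S\setminus\{f_{abc}\}\subseteq\{\,f_{a'b'c'}\mid a'>b'>c',\ (a',b',c')\neq(a,b,c)\,\}.
\]
Because the triple $(a,b,c)$ satisfies exactly the congruence hypotheses required in the previous lemma, that lemma tells us that $f_{abc}$ is not a consequence of the identities \eqref{C1identp} together with all the identities $f_{a'b'c'}$ appearing on the right-hand side above; equivalently, $f_{abc}$ does not lie in the $T_{\mathbb{Z}}$-ideal generated by that larger family. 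Invoking the monotonicity of $T_{\mathbb{Z}}$-ideal generation with respect to inclusion of generating sets, the ideal $\langle S\setminus\{f_{abc}\}\rangle_{\mathbb{Z}}$ is contained in that larger ideal, and therefore $f_{abc}$ cannot lie in it either. This is precisely the assertion we sought, and it holds for every $f_{abc}\in S$.

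I expect no genuine obstacle at this stage: the substantive difficulty — exhibiting the $\mathbb{Z}$-graded Lie algebra $L^{(a,b,c)}=UT(4,K)$ that satisfies all the competing identities while violating $f_{abc}$ — was already overcome inside the previous lemma. The only points left to check here are the set-theoretic inclusion $S\setminus\{f_{abc}\}\subseteq\{f_{a'b'c'}\mid a'>b'>c',\ (a',b',c')\neq(a,b,c)\}$ and the elementary fact that a smaller generating set yields a smaller $T_{\mathbb{Z}}$-ideal; both are routine. I would also remark that, for the independence statement alone, one does not even need the identities \eqref{C1identp} among the competitors, so the previous lemma is in fact slightly stronger than what the corollary requires.
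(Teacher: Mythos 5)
Your proof is correct and follows exactly the route the paper intends: the corollary is an immediate consequence of the preceding lemma, since for each fixed $f_{abc}$ in the set, the ideal generated by the remaining elements is contained in the (larger) ideal generated by the identities \eqref{C1identp} together with \emph{all} $f_{a'b'c'}$ with $a'>b'>c'$ and $(a',b',c')\neq(a,b,c)$, which the lemma shows does not contain $f_{abc}$. Your closing remark that the lemma is slightly stronger than what the corollary needs (the competitors \eqref{C1identp} could be dropped) is also accurate.
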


The above statements together with Theorem \ref{mainresult1} yield the following theorem.

\begin{theorem}
		Over an infinite field of characteristic $p>2$, the graded identities 
\[
[x_1^a,x_2^b] \equiv 0
\]
where $(b-a)>0$ is a multiple of $p$, and
\[
\alpha[x_1^a,x_2^b, x_3^c]-\beta 	[x_1^a,x_3^c, x_2^b]\equiv 0
\]
where the integers $a$, $b$ and $c$ with $a>b>c$ 
satisfy 
\[
{a}\not\equiv {b}\pmod{p},\ {a}\not\equiv {c}\pmod{p}, \ {a}\not\equiv {(b+c)}\pmod{p}, \ {b}\not\equiv  (a+c)\pmod{p},
\]
form a minimal basis for the $\mathbb{Z}$-graded identities of $U_1$ .
\end{theorem}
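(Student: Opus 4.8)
The statement packages together two claims: that the displayed family $B$ is a basis of $T_\mathbb{Z}(U_1)$, and that it is minimal. The plan is to extract completeness from Theorem \ref{mainresult2} and minimality from the independence results proved above in this section, so that the theorem becomes essentially an assembly of what has already been established rather than a fresh argument.

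For completeness, Theorem \ref{mainresult2} guarantees that the full families \eqref{C1identp} and \eqref{C2identp} generate $T_\mathbb{Z}(U_1)$; it therefore suffices to check that each of their members lies in $\langle B\rangle_\mathbb{Z}$. For the degree-two identities, antisymmetry of the bracket together with a relabelling of variables, $[x_1^a,x_2^b]=-[x_2^b,x_1^a]$, reduces each $f_{r,s}$ with $p\mid(s-r)$ to a representative with $s>r$, which is exactly the retained form. For the degree-three identities, the permutation-symmetry lemma and its corollary proved above identify each $f_{abc}$ with $\pm f_{a'b'c'}$ for the decreasing rearrangement $a'>b'>c'$, so one may always assume $a>b>c$; then the explicit computations of this section show that any triple violating one of the relevant congruence conditions yields an $f_{abc}$ whose two commutators are killed by \eqref{C1identp} (the coefficient $\alpha$ or $\beta$ being zero modulo $p$ while the surviving commutator reduces, possibly after the Jacobi rewriting $[x_1^a,x_2^b,x_3^c]-[x_1^a,x_3^c,x_2^b]=[x_1^a,[x_2^b,x_3^c]]$, to a consequence of a degree-two identity). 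Removing all these redundancies leaves precisely $B$, whence $\langle B\rangle_\mathbb{Z}=T_\mathbb{Z}(U_1)$.

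Minimality is argued one generator at a time, and here the independence constructions do all the work. For a retained degree-two generator $f_{r,s}$, the $\mathbb{Z}$-graded algebra $H=UT(3,K)$ built above satisfies every identity in \eqref{C2identp} and every $f_{u,v}$ with $(u,v)\neq(r,s)$ but fails $f_{r,s}$; since $B\setminus\{f_{r,s}\}$ is contained in that collection, $H$ witnesses $f_{r,s}\notin\langle B\setminus\{f_{r,s}\}\rangle_\mathbb{Z}$. Symmetrically, for a retained degree-three generator $f_{abc}$ the algebra $L^{(a,b,c)}=UT(4,K)$ built above satisfies every identity in \eqref{C1identp} and every remaining $f_{a'b'c'}$ but fails $f_{abc}$, giving $f_{abc}\notin\langle B\setminus\{f_{abc}\}\rangle_\mathbb{Z}$. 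No separate treatment of cross-family dependence is needed: each witness algebra was designed to satisfy the entire opposite family, so deleting a generator of either degree is detected by an algebra that still models all remaining generators of both degrees.

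The step I expect to be the genuine obstacle is the degree-three bookkeeping that ties the two halves together: one must confirm that the congruence conditions kept in the statement, combined with the ordering $a>b>c$, single out exactly the irredundant triples, so that every excluded triple is a consequence of \eqref{C1identp} while no retained triple is secretly redundant. This requires applying the permutation symmetries consistently with the chosen ordering, verifying that the several degenerate congruence cases are mutually exhaustive, and, crucially, checking that the resulting list of conditions matches exactly the non-commutation constraints imposed on the elementary $\mathbb{Z}$-grading of $UT(4,K)$ used in the independence construction. By contrast, once those conditions are pinned down, the minimality half is immediate from the two witness algebras.
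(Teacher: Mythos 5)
Your architecture is the same as the paper's: the paper proves this theorem in one line, by combining Theorem~\ref{mainresult2} (completeness) with the independence lemmas of this section (minimality), and your minimality half is exactly that assembly and is sound as far as it goes. The genuine problem is in the completeness half, which you rightly try to make explicit and which contains a step that fails. You claim that antisymmetry plus relabelling reduces every $f_{r,s}$ with $p\mid (s-r)$ to a representative with $s>r$. This is false precisely on the diagonal: the swap sends $[x_1^a,x_2^a]$ to $-[x_1^a,x_2^a]$, so no representative with strict inequality is ever produced. And the gap cannot be patched: $[x_1^a,x_2^a]$ is a genuine multilinear graded identity of $U_1$ (it involves two distinct variables, so it is not a consequence of anticommutativity, and it holds because $\dim L_a=1$), yet the paper's own first lemma of this section --- the $UT(3,K)$ witness in its case $r=s$ --- satisfies all identities \eqref{C2identp} and all $f_{u,v}$ with $(u,v)\neq(a,a)$, hence every element of the proposed basis $B$, while failing $[x_1^a,x_2^a]$. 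Therefore $[x_1^a,x_2^a]\notin\langle B\rangle_{\mathbb{Z}}$, and $B$ as written in the statement cannot generate $T_{\mathbb{Z}}(U_1)$: the diagonal identities must be retained (the section's corollary correctly keeps $r\le s$; the restriction $(b-a)>0$ in the theorem is a slip of the paper which your reduction step silently papers over).

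The degree-three bookkeeping that you explicitly defer (calling it ``the genuine obstacle'') also does not close, and for a parallel reason. By the paper's own reductions preceding the lemmas, a triple with $\bar b=\bar c$, or with $\bar c=\overline{a+b}$, yields an $f_{abc}$ lying in the $T_{\mathbb{Z}}$-ideal generated by \eqref{C1identp}; such triples are \emph{not} excluded by the four congruences retained in the statement, so your conclusion ``removing all these redundancies leaves precisely $B$'' is false --- $B$ still contains redundant generators, which simultaneously ruins minimality for those generators. Relatedly, the $UT(4,K)$ witness only satisfies \eqref{C1identp} when $\bar b\neq\bar c$ and $\overline{a+b}\neq\bar c$, since $[E_{23},E_{34}]\neq 0$ and $[E_{13},E_{34}]\neq 0$ have homogeneous factors of degrees $(b,c)$ and $(a+b,c)$ respectively, so the independence lemma you invoke also needs the larger list of hypotheses. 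In short: your plan is the paper's plan, but carrying it out honestly shows the statement needs repair in two places (reinstating $b=a$ in the degree-two family, and replacing the four congruence conditions by the full six pairwise ones), and a correct proof must incorporate both corrections rather than derive the statement as written.
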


If $\mathrm{char}\ K=0$, then we consider the identities \eqref{Cbasis1} instead of \eqref{C1identp}. The next corollary is a direct consequence of the above theorem.
\begin{corollary}
Over an infinite field of characteristic different from two, the $\mathbb{Z}$-graded identities for the Lie algebra $U_1$ do not admit any finite basis.
\end{corollary}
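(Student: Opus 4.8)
The plan is to combine the minimality of the basis established in the preceding results with the fact that the sub-family of ``binary'' identities is infinite, and then run a standard compactness-type argument: a $T_{\mathbb{Z}}$-ideal generated by an infinite independent set cannot be finitely generated.

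First I would fix notation for the established basis. By Theorem \ref{mainresult1} (when $\mathrm{char}\,K=0$) and Theorem \ref{mainresult2} (when $\mathrm{char}\,K=p>2$), the $T_{\mathbb{Z}}$-ideal $T_{\mathbb{Z}}(U_1)$ is generated by the family $\mathcal{F}=\{f_{r,s}\}\cup\{f_{abc}\}$, where $f_{r,s}=[x_1^r,x_2^s]$ (with $r\equiv s\pmod p$, respectively $r=s$ in characteristic zero) and $f_{abc}=\alpha[x_1^a,x_2^b,x_3^c]-\beta[x_1^a,x_3^c,x_2^b]$. The two structural inputs I need are already available: the sub-family $\{f_{r,s}\mid r\le s,\ r\equiv s\pmod p\}$ is infinite, and the preceding lemma that realizes the relevant $\mathbb{Z}$-grading on $UT(3,K)$ shows that each single $f_{r,s}$ fails to be a consequence of $\mathcal{F}\setminus\{f_{r,s}\}$ (that algebra satisfies all the $f_{abc}$ and all $f_{u,v}$ with $(u,v)\neq(r,s)$, yet violates $f_{r,s}$).

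Next I would argue by contradiction. Suppose $T_{\mathbb{Z}}(U_1)$ admitted a finite basis $B=\{p_1,\dots,p_k\}$. Since $\langle B\rangle_{\mathbb{Z}}=T_{\mathbb{Z}}(U_1)=\langle\mathcal{F}\rangle_{\mathbb{Z}}$, each $p_i$ lies in $\langle\mathcal{F}\rangle_{\mathbb{Z}}$. Membership in a $T_{\mathbb{Z}}$-ideal is witnessed by a finite expression, namely a finite linear combination of iterated brackets of graded-endomorphic images of the generators, so each $p_i$ is in fact a consequence of a finite subset of $\mathcal{F}$; taking the union over the finitely many indices $i$ yields a finite subset $\mathcal{F}_0\subseteq\mathcal{F}$ with $B\subseteq\langle\mathcal{F}_0\rangle_{\mathbb{Z}}$. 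Consequently $T_{\mathbb{Z}}(U_1)=\langle B\rangle_{\mathbb{Z}}\subseteq\langle\mathcal{F}_0\rangle_{\mathbb{Z}}\subseteq\langle\mathcal{F}\rangle_{\mathbb{Z}}=T_{\mathbb{Z}}(U_1)$, which forces $\langle\mathcal{F}_0\rangle_{\mathbb{Z}}=T_{\mathbb{Z}}(U_1)$.

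Finally, because the binary sub-family is infinite while $\mathcal{F}_0$ is finite, I can choose some $f_{r,s}\in\mathcal{F}\setminus\mathcal{F}_0$. Then $f_{r,s}\in T_{\mathbb{Z}}(U_1)=\langle\mathcal{F}_0\rangle_{\mathbb{Z}}\subseteq\langle\mathcal{F}\setminus\{f_{r,s}\}\rangle_{\mathbb{Z}}$, that is, $f_{r,s}$ is a consequence of the remaining identities, contradicting the independence furnished by the $UT(3,K)$ construction. This contradiction shows that no finite basis exists, in any characteristic different from two. The only genuinely delicate point is the passage from ``$p_i$ is a consequence of $\mathcal{F}$'' to ``$p_i$ is a consequence of a finite subsystem $\mathcal{F}_0$,'' which rests on the finiteness of the defining expression of an element of a $T_{\mathbb{Z}}$-ideal; the remainder is bookkeeping with the already-proved infinitude and independence of the binary family.
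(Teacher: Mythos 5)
Your proposal is correct and follows essentially the same route as the paper: the paper derives the corollary from its minimal-basis theorem, whose content is precisely the infinitude and independence (via the graded $UT(3,K)$ construction) of the binary family $f_{r,s}$ that you invoke. The only difference is that you spell out the standard finiteness argument (each element of a $T_{\mathbb{Z}}$-ideal is a consequence of finitely many generators) which the paper leaves implicit in calling the corollary a ``direct consequence.''
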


\end{document}